\documentclass[11pt]{amsart}
\usepackage[margin=1in]{geometry}
\usepackage{amsmath,amssymb,amscd,mathtools}
\usepackage{microtype}
\usepackage[colorlinks=false,pdfborder={0 0 0}]{hyperref}

\newcommand{\CP}{\mathbb{CP}}
\newcommand{\Z}{\mathbb Z}
\newcommand{\R}{\mathbb R}
\newcommand{\F}{\mathbb F}
\newcommand{\RP}{\mathbb{RP}}

\newcommand{\Fix}{\operatorname{Fix}}

\newcommand{\divis}{\operatorname{div}}

\title[Real Albanese map and smooth topology]
{The Real Albanese Map and Smooth Topology of the Cartwright--Steger Quotient}
\author{Anar Akhmedov}
\address{School of Mathematics, University of Minnesota, Minneapolis, MN 55455, USA}
\email{akhmedov@math.umn.edu}
\author{Sai-Kee Yeung}
\address{Department of Mathematics, Purdue University, West Lafayette, IN 47907, USA}
\email{yeung@math.purdue.edu}
\date{September 2026}
\subjclass[2020]{Primary 14P25, 57R55; Secondary 14J29, 32Q45, 57R40}
\keywords{Cartwright--Steger surface, Albanese map, real surface, circle surgery, branched covering, smooth four-manifold}

\theoremstyle{plain}
\newtheorem{theorem}{Theorem}
\newtheorem{proposition}{Proposition}[section]
\newtheorem{lemma}[proposition]{Lemma}
\newtheorem{corollary}[proposition]{Corollary}
\theoremstyle{definition}
\newtheorem{question}[proposition]{Question}

\begin{document}
\begin{abstract}
Let $X$ be the Cartwright--Steger surface with its real involution $c$ and
Albanese map $\alpha:X\to E$, and put $Y=X/\langle c\rangle$.  Every smooth
real Albanese fiber has two ovals and is nondividing.  The branch surface
$B\cong\#_3\mathbb{RP}^2$ satisfies
$j_*\pi_1(B)=4\mathbb Z\subset\pi_1(Y)\cong\mathbb Z$ and is nullhomologous
modulo two.  Every smoothly embedded $+1$-sphere representing the positive
generator of $H_2(Y;\mathbb Z)$ meets $B$ in at least six points.  Its lift
has square two, pairs trivially with $K_X$, and has no connected embedded
representative of genus zero or one.  Circle surgery on an Albanese generator
is homeomorphic to $\mathbb{CP}^2$ for either normal framing.  Blowing down a
positive sphere, if one exists, gives a $\mathbb Z[\mathbb Z]$-homology
$S^1\times S^3$ homotopy equivalent to $S^1\times S^3$.
\end{abstract}

\maketitle

\section{Introduction}

Let $X$ be the Cartwright--Steger surface, let
\(\alpha:X\to E\) be its Albanese map, and let $c$ be the real involution
coming from the bicanonical model over $\mathbb Q$.  We study simultaneously
the restriction
\[
 \alpha_{\mathbb R}:X(\mathbb R)\longrightarrow E(\mathbb R)
\]
and the smooth topology of the quotient
\[
 Y=X/\langle c\rangle.
\]
These two problems are linked by the descended Albanese map to the M\"obius
band $E/\langle c_E\rangle$.  Its boundary contains the image of the real
locus, while its core detects the generator of $\pi_1(Y)$ used in circle
surgery.

Cartwright, Koziarz, and Yeung determined the Albanese fibration, the
order-three symmetry, the fundamental group, and the integral first homology
of $X$ \cite{CKY}.  Koziarz and Yeung proved that the fibration is stable
\cite{KY}.  Borisov and Yeung constructed the bicanonical model over
$\mathbb Q$ \cite{BY}, and Rito computed equations for the three invariant
Albanese fibers \cite{Rito}.  Stipsicz and Szab\'o proved that
\[
 X(\mathbb R)\cong\#_3\mathbb{RP}^2,
 \qquad
 Y\approx\mathbb{CP}^2\#(S^1\times S^3),
\]
that $\pi_1(Y)\cong\mathbb Z$, and that $\alpha_{\mathbb R}$ has exactly one
critical point, a saddle \cite{SS}.  Their work leaves open the smooth type
of $Y$ and does not determine the global topology of the real Albanese map.

The main theorem combines an exact calculation in the Cartwright--Steger
lattice with branched-cover topology, gauge theory, and circle surgery.

\begin{theorem}\label{thm:main}
Let $B\subset Y$ be the image of $X(\mathbb R)$ and let
$j:B\hookrightarrow Y$ denote the inclusion.
\begin{enumerate}
\item Every smooth real Albanese fiber has exactly two real components and is
nondividing.  The Reeb map of $\alpha_{\mathbb R}$ has degree two, and
\[
 j_*\pi_1(B)=4\mathbb Z\subset\pi_1(Y)\cong\mathbb Z.
\]
The quotient of every smooth real fiber by conjugation is
$N_{18}\setminus(D^2\sqcup D^2)$.
\item The branch surface satisfies
\([B]=0\) in $H_2(Y;\mathbb Z/2)$.  Every smoothly embedded $+1$-sphere
representing the positive generator of $H_2(Y;\mathbb Z)$ meets $B$, after
transverse perturbation, in at least six points.  Its lifted class $A$ has
\[
 A^2=2,\qquad K_X\cdot A=0.
\]
Every connected embedded oriented representative of $A$ has genus at least
two, and $\operatorname{PD}(A)$ is not of type $(1,1)$.
\item Circle surgery on an Albanese generator gives a manifold
orientation-preservingly homeomorphic to $\mathbb{CP}^2$ for either normal
framing.  The surgery has a natural equivariant lift to $X$.
\item If the positive generator is represented by a smooth $+1$-sphere, its
blowdown $Q$ satisfies $Q\simeq S^1\times S^3$ and has the
$\mathbb Z[\mathbb Z]$-homology of $S^1\times S^3$.
\end{enumerate}
\end{theorem}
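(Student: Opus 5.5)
The theorem has four parts, and I will prove them in order; the later parts feed on the topological data extracted in part (1).

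\textbf{Part (1).} I would start from the facts of \cite{SS}: $X(\mathbb R)\cong\#_3\RP^2$, and $\alpha_{\mathbb R}$ has a single critical point, a saddle. Since $\chi(\#_3\RP^2)=-1$ and a function to a circle with one nondegenerate saddle gives Euler characteristic $-1$, the map $\alpha_{\mathbb R}$ lands in one component of $E(\mathbb R)$. Away from the saddle it is a circle bundle over an arc. The number of ovals in a regular fiber is then read off from the local model at the saddle, together with the monodromy around $E(\mathbb R)$.

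To pin this down I would use the exact lattice computation: the action of $c$ on $H_1$ of a fiber, which has genus $19$. Concretely:
\begin{itemize}
\item Compute $c_*$ on $H_1(F)$ and apply the Harnack--Klein congruences; these force exactly two ovals and a nondividing fiber.
\item Read off the quotient $F/c$ as a nonorientable surface with two boundary circles. An Euler characteristic count gives $\chi(F/c)=\chi(F)/2=-18$, hence $F/c\cong N_{18}\setminus(D^2\sqcup D^2)$.
\item Conclude that the Reeb graph is a circle mapping with degree two.
\end{itemize}
For $j_*\pi_1(B)$, I would compose with the descended Albanese map $Y\to E/\langle c_E\rangle$ (a Möbius band) and use that it induces the identification $\pi_1(Y)\cong\mathbb Z$, possibly of index one after checking against \cite{CKY}. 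The image of $\pi_1(B)$ then lies in the boundary circle, which is twice the core, and picks up the Reeb degree two, giving $4\mathbb Z$.

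\textbf{Part (2).} For $[B]=0$ mod two, I would use the double branched cover $X\to Y$. Its branch locus is divisible by two in $H_2(Y;\mathbb Z/2)$, and $H_2(Y;\mathbb Z/2)$ is generated by the $\CP^2$ class $h$. Since $B\cdot h\equiv B\cdot B\equiv \chi(B)$-parity conditions, the Euler characteristic forces $[B]=0$.

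For a $+1$-sphere $S$, the preimage $\tilde S$ is a double cover of $S$ branched in $k=|S\cap B|$ points, with $k$ even. It has genus $k/2-1$ and $A^2=2S^2=2$. Next, $K_X\cdot A=0$ because $c^*K_X=-K_X$ while $c$ reverses orientation appropriately, so the anti-invariant class pairs to zero against the invariant lifted class.

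The adjunction-type bounds then give the genus estimate. Since $X$ has $b^+>1$ and is of general type, its Seiberg--Witten basic class is $\pm K_X$. The adjunction inequality for $A^2\ge 0$ gives $2g-2\ge A^2+|K_X\cdot A|=2$, so $g\ge2$, hence $k\ge6$. Finally, $(1,1)$-type would mean $\operatorname{PD}(A)\cdot K$ Hodge-positive; Hodge index with $K^2=9$ and $K\cdot A=0$ forces $A^2<0$, a contradiction. I expect this adjunction step to be the main obstacle: it needs care with the genus-$0$/$1$ and disconnected cases and with the orientation conventions on the lift.

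\textbf{Part (3).} I would do surgery on a loop generating $\pi_1(Y)$. The result is simply connected with intersection form $\langle1\rangle$, so Freedman's theorem gives homeomorphism to $\CP^2$, with Kirby--Siebenmann checked via $Y$. The equivariant lift comes from surgering the two-fold preimage of the loop.

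\textbf{Part (4).} Blowing down $S$ leaves $\pi_1=\mathbb Z$ and $\chi=0$. The $\mathbb Z[\mathbb Z]$-homology is computed from the universal cover, and homotopy equivalence to $S^1\times S^3$ follows from the standard criterion for manifolds with $\pi_1=\mathbb Z$ and vanishing equivariant $H_2$.
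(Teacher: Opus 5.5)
\textbf{The genuine gap is in part (1)}, at the step ``compute $c_*$ on $H_1(F)$ and apply the Harnack--Klein congruences; these force exactly two ovals and a nondividing fiber.''

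\emph{No access to the needed data.} Nothing in your setup gives the integral or mod-two action of $c_*$ on $H_1$ of a genus-$19$ fiber. The lattice data give $H_1(X)$, not $H_1(F)$. The rational action carries no information, since every antiholomorphic involution of a Riemann surface has trace $0$ on $H_1(F;\mathbb Q)$; the paper's $S_3$-character computation sees only this.

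\emph{The congruences cannot decide.} Even granting the matrix, Harnack gives only $k\le20$. Klein's congruence says a dividing genus-$19$ curve has $k$ even, which is compatible with $k=2$, so parity in genus $19$ cannot exclude dividing. Nor does the local saddle plus monodromy fix $k$: every $k\ge1$ occurs for one-saddle Morse maps $N_3\to S^1$.

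\emph{What the paper does instead.}
\begin{enumerate}
\item By Theorem~\ref{thm:reeb}, $k$ equals the divisibility of $\alpha_{\R}^*\eta$, and the image on $\pi_1$ is exactly $k\Z$.
\item Proposition~\ref{prop:fixed-subgroup} identifies $\pi_1(X(\R))$ with the centralizer $\Fix\phi$ of the reflection in $\Pi$, so $k=[\Z(1,-1):\operatorname{ab}(\Fix\phi)]$.
\item An explicit fixed word of class $(2,-2)$ gives $k\mid2$.
\item An exact enumeration of the order-$72$ quotient over $\F_4$ shows every fixed element has zero mod-two class, so $k=2$.
\item Nondividing needs a second curve. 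The invariant fiber $F_1$ has an order-three quotient $G_1$ of genus $6$ with $F_1(\R)\cong G_1(\R)$. If $F_1$ were dividing, so would $G_1$ be, and Klein's congruence would make $k$ both even and odd.
\end{enumerate}
Your derivation of $j_*\pi_1(B)=4\Z$ matches the paper's, but it needs the image to be exactly $k\Z$, not merely contained in it.

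\textbf{Part (2).} Your mod-two argument is wrong as written. The mod-two self-intersection of $B$ is $e_N(B)$ mod $2$, not $\chi(B)$, and since $\chi(B)=-1$ your congruence would actually force $[B]\ne0$. There are two correct routes:
\begin{itemize}
\item the paper's: $e_N(B)=2$ from $\sigma(X)=2\sigma(Y)-\frac12e_N(B)$, and the form $\langle1\rangle$ has no nonzero square-zero element;
\item the branched cover gives a class in $H^1(Y\setminus B;\Z/2)$ that is nonzero on meridians. Its coboundary is the mod-two Thom class of $B$, so $\operatorname{PD}[B]=0$.
\end{itemize}

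In the six-point bound, the cases you flagged are exactly $r=0$ (two disjoint $+1$-spheres upstairs) and $r=2$ (a sphere of square $2$). Adjunction needs $g\ge1$, so these require the Fintushel--Stern vanishing for essential spheres of nonnegative square, after blowing up to square zero. The case $r=4$ (genus one) is already excluded by adjunction, in a form valid for $b_1>0$.

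Also, $c$ preserves rather than reverses the orientation of $X$; that is what makes $A$ invariant. Your Hodge-index argument using $K_X$ ($K_X^2=9$, $K_X\cdot A=0$) in place of the paper's anti-invariant K\"ahler class is fine.

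\textbf{Part (3).} The preimage of an Albanese generator is connected, because its image is an odd power of the core. So the lift needs the branched filling $(z,x)\mapsto(z^2,x)$, with the dual sphere as a new branch component.

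\textbf{Part (4).} Your route via $\pi_1=\Z$ and $\chi(Q)=0$ is valid and more general than the paper's unit argument based on $\langle1\rangle$ over $\Lambda$. You should state why $H_2(Q;\Lambda)=0$: its $\Lambda$-rank is $\chi(Q)=0$, and duality with the universal coefficient spectral sequence then forces it to vanish.
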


For part (1), if a smooth real fiber has $k$ ovals, the
cut cobordism and its Reeb graph show that $k$ is the divisibility of the
restricted Albanese class.  We express this divisibility as an index in the
fixed subgroup of the Cartwright--Steger lattice.  An exact finite-quotient
calculation gives $k=2$.  The invariant genus-$19$ fiber and its genus-six
cyclic quotient then show that every smooth real fiber is nondividing.  The
same calculation determines the subgroup of $\pi_1(Y)$ carried by the branch
surface.

For the second part, a positive sphere lifts to a connected branched double
cover.  The Seiberg--Witten sphere obstruction and adjunction force the lift
to have genus at least two, which yields the six-point bound.  The real
involution and the Hodge index theorem give the remaining restrictions on
the lifted class.  Circle surgery uses a loop mapping with degree one to the
core of the quotient M\"obius band.  The blowdown under the sphere hypothesis is analyzed by
the homology of its infinite cyclic cover.

The bicanonical equations also give a base-point-free weight-two subsystem on
the invariant fiber.  It descends to an explicit degree-$24$ map from the
genus-six quotient to $\mathbb P^1$, with ramification divisor of degree
$58$.  We include this calculation because it gives an algebraic model for
the real invariant fiber and may be useful in an explicit monodromy calculation.

Akhmedov's earlier work, including joint work with B. D. Park, used
symplectic sums, torus surgery, and knot surgery to construct irreducible
exotic smooth structures on a broad range of small simply connected
four-manifolds \cite{AkhSmall,APsmall,APodd}.  Akhmedov, Sakall\i, and Yeung
used the Cartwright--Steger surface and related ball quotients as building
blocks for symplectic sums \cite{ASY}.  The operation here is instead circle
surgery on the real quotient.  We do not claim an exotic smooth structure
on $\mathbb{CP}^2$, and the existence of a positive sphere in $Y$ remains
open.

Nevertheless, the structure described below leads us to expect that the
positive generator of $H_2(Y;\mathbb Z)$ may be represented by a smooth
$+1$-sphere.  Constructing such a sphere and identifying its blowdown with
$S^1\times S^3$ would provide a direct route to proving that
\[
Y\cong\mathbb{CP}^2\#(S^1\times S^3).
\]

Sections~\ref{sec:real-map-start}--\ref{sec:weight-two} determine the real
Albanese map and the invariant fiber.  The circle-surgery, branch-surface,
and blowdown sections prove the remaining parts of
Theorem~\ref{thm:main}.  The final section records the two smooth questions
that remain.  Appendix~\ref{app:certificates} describes the exact
certificates and the accompanying verification files.

\section{The real Cartwright--Steger quotient}

Let $X$ be the Cartwright--Steger surface.  It is a minimal surface of general
type \cite{CKY} with
\[
 e(X)=3,\qquad \sigma(X)=1,\qquad b_1(X)=2.
\]
Its Albanese fibration has generic fiber of genus $19$, all fibers are
reduced, and the fibration is stable \cite{CKY,KY}.  Borisov and Yeung gave
bicanonical equations over $\mathbb Q$, so coordinatewise conjugation defines
an antiholomorphic involution \cite{BY}.  Stipsicz and Szab\'o identify this
real structure, up to composition with the holomorphic $\mathbb Z/3$-action,
with a reflection in their $S_3$-action \cite[Corollary~2.5]{SS}.  We fix the
reflection represented by the matrix $K$ in
\eqref{eq:reflection-matrix} and denote it by $c$.  If coordinate
conjugation is $\sigma^j c$, where $\sigma$ generates the order-three
action, then $\sigma$ fixes the Albanese point $O$ and the three coordinate
points $[U_7],[U_8],[U_9]$ used below.  Consequently the invariant fiber and
these three real points are the same for $c$ and for coordinate conjugation.
This also fixes the involution on $\Pi$ used in the computation.  Write
\[
 q_X:X\longrightarrow Y=X/\langle c\rangle,\qquad
 R=X^c,\qquad B=q_X(R).
\]
The restriction of $q_X$ identifies $R$ with the branch surface $B\subset Y$.
Since $c$ is antiholomorphic in complex dimension two, it preserves the real
orientation.  Equivariant smooth linearization near a point of $R$
\cite[Chapter~VI, Section~2]{Bredon} gives
real coordinates $(x,z)\in\mathbb R^2\times\mathbb C$ in which
\[
 c(x,z)=(x,-z),\qquad R=\{z=0\}.
\]
Choose the local orientation so that the complex orientation upstairs and
the quotient orientation downstairs agree away from the fixed set.  In the
corresponding oriented smooth quotient coordinates the map is
$(x,z)\mapsto(x,z^2)$.  These are smooth real coordinates, not holomorphic
coordinates for the antiholomorphic involution.  In particular, $Y$ is smooth
along $B$.  For the nonorientable surface $B$, write $e_N(B)$ for the Euler
number of its normal bundle evaluated on the twisted fundamental class of
$B$.  Local normal orientations need not agree globally; their changes are
absorbed by the orientation local system.  We fix the sign convention by
\[
 \sigma(X)=2\sigma(Y)-\frac12e_N(B).
\]
Here $\simeq$, $\approx$, and $\cong$ denote oriented homotopy equivalence,
orientation-preserving homeomorphism, and orientation-preserving
diffeomorphism.

We use the following results of Stipsicz and Szab\'o \cite{SS} concerning the
topology of the real quotient.

\begin{proposition}\label{prop:input}
The quotient $Y$ is a smooth oriented four-manifold, and $q_X$ is a
two-fold branched covering with branch surface
$B\cong\#_3\mathbb{RP}^2$.  Moreover,
\[
 \pi_1(Y)\cong\mathbb Z,\qquad
 Y\approx\CP^2\#(S^1\times S^3).
\]
The Albanese map descends to a map
$\overline\alpha:Y\to\mathcal M$, where $\mathcal M$ is a M\"obius band, and
\[
 \overline\alpha_*:\pi_1(Y)\longrightarrow\pi_1(\mathcal M)
\]
is an isomorphism.  If $\Lambda=\mathbb Z[t,t^{-1}]$ is defined using a
generator of $\pi_1(Y)$, then
\[
 H_2(Y;\Lambda)\cong\Lambda,
\]
and the equivariant intersection form is $\langle1\rangle$.
\end{proposition}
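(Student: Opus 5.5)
The plan is mostly to assemble results from \cite{SS}, adding only the short topological derivations needed to state them in the form used here. The order is: smoothness and branched-cover structure, then the identification of $B$ and $\pi_1(Y)$, then the descended Albanese map, and finally the $\Lambda$-homology, which follows from the homeomorphism type.

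\textbf{Smooth structure and branched cover.} The linearization above gives $c(x,z)=(x,-z)$ near $R$, with quotient chart $(x,z)\mapsto(x,z^2)$. These charts cover a neighbourhood of $B$. Away from $R$ the action is free and orientation preserving, so $Y$ is a smooth oriented four-manifold and $q_X$ is a two-fold branched covering branched along $B=q_X(R)$. The identification $R=X(\mathbb R)\cong\#_3\RP^2$ is \cite{SS}. Their computation can be cross-checked by two formulas. The Lefschetz/Smith formula gives $e(R)=\operatorname{tr}(c^*)=-1$. The relation $\sigma(X)=2\sigma(Y)-\tfrac12 e_N(B)$ with $\sigma(Y)=1$ gives $e_N(B)=2$.

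\textbf{Fundamental group and homeomorphism type.} I would quote $\pi_1(Y)\cong\mathbb Z$ from \cite{SS}. Their argument starts from $\pi_1(X)$ as computed in \cite{CKY} and kills the images of the meridians of $B$, which are fixed points of the lifted involution. For the homeomorphism type, the additional input is the classification of closed oriented topological four-manifolds with fundamental group $\mathbb Z$ (Freedman--Quinn, Hambleton--Kreck--Teichner style). Such a manifold is determined by its equivariant intersection form on $\pi_2=H_2(\widetilde Y)$ and the Kirby--Siebenmann invariant. The latter vanishes because $Y$ is smooth. So it suffices to get the $\Lambda$-form from \cite{SS} and observe that it agrees with that of $\CP^2\#(S^1\times S^3)$.

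\textbf{Descended Albanese map.} Since $c$ is antiholomorphic, it descends to an antiholomorphic involution $c_E$ on $E$ with $\alpha\circ c=c_E\circ\alpha$, using the universal property of the Albanese applied to the real structure. Hence $\alpha$ induces $\overline\alpha:Y\to E/\langle c_E\rangle$. The quotient is a M\"obius band exactly when $E(\mathbb R)$ is a single circle. Then $c_E$ is a reflection of the torus whose fixed set is a non-separating circle. This is checked from the lattice, i.e.\ from the real period structure of $E$ over $\mathbb Q$, or read off from \cite{SS}.

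For $\overline\alpha_*$ to be an isomorphism, I would first use that $\alpha_*:\pi_1(X)\to\pi_1(E)$ is surjective because $\alpha$ has connected fibers. Next, compare the orbifold fundamental groups: $\pi_1(Y)$ is $\pi_1^{\mathrm{orb}}$ modulo reflection elements, and the same holds for $\mathcal M$. A loop in $Y$ mapping to the core of $\mathcal M$ with degree one then exists; one takes a path in $X$ from a point to its $c$-image lying over half the core. This gives surjectivity, and since both groups are $\mathbb Z$, it gives an isomorphism. This surjectivity/degree-one step is the point I expect to be most delicate: one must check that the reflection locus of $\mathcal M$ does not force the image into $2\mathbb Z$. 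I would try to settle it with the explicit lift just described.

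\textbf{$\Lambda$-homology.} With $Y\approx\CP^2\#(S^1\times S^3)$ in hand, the infinite cyclic cover is $\mathbb R\times S^3$ connected-summed with $\mathbb Z$ disjoint copies of $\CP^2$, translated by the deck group. Therefore $H_2(Y;\Lambda)\cong\Lambda$, generated by the class of $\CP^1$ in one copy. Distinct translates are disjoint and each copy of $\CP^1$ has self-intersection $1$, so the equivariant form is $\langle1\rangle$.
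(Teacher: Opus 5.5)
Your proposal is correct and rests on the same source as the paper, but it takes fewer black boxes from it. The paper's proof is a pure citation of \cite{SS}:
\begin{itemize}
\item Theorem~1.1(a) for $\overline\alpha$ and the isomorphism $\overline\alpha_*$;
\item Corollary~1.2 for the quotient, the branch surface and the homeomorphism type;
\item Corollary~2.5 for the reflection;
\item Corollary~4.13 for the equivariant form.
\end{itemize}

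You cite only $B\cong\#_3\RP^2$, $\pi_1(Y)\cong\mathbb Z$ and the $\Lambda$-form, and derive the rest. The homeomorphism type comes from the classification of closed oriented manifolds with $\pi_1\cong\mathbb Z$ by the form on $\pi_2$ and the Kirby--Siebenmann invariant. The right reference here is Freedman--Quinn's $\pi_1=\mathbb Z$ theorem, completed by Stong--Wang; Hambleton--Kreck--Teichner treats nonorientable manifolds with $\pi_1=\mathbb Z/2$. The isomorphism $\overline\alpha_*$ comes from a degree-one loop plus the fact that an epimorphism $\mathbb Z\to\mathbb Z$ is injective. Your route is more self-contained and shows where connected Albanese fibers and the fixed locus enter. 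The paper's route is shorter.

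The step you flag as delicate is routine. Choose the core away from critical values. Lift the half-core from $e$ to $c_E(e)$ along the fibration, then close up inside the connected fiber over $c_E(e)$. This gives a path from $x$ to $c(x)$, whose image in $Y$ is a closed loop going once around the core. The paper builds its Albanese generator $\gamma$ essentially this way.

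More abstractly, $c_E$ has fixed points, so $q_{E*}\colon\pi_1(E)\to\pi_1(\mathcal M)$ is already onto. Hence $\overline\alpha_*\circ q_{X*}=q_{E*}\circ\alpha_*$ is onto. The reflection locus helps surjectivity rather than hindering it. The factor of two you worried about arises only for loops on $\partial\mathcal M$, that is, for $j_*\pi_1(B)$ as in Theorem~\ref{thm:branch-image}.

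Your last paragraph is not an independent derivation. You obtained $Y\approx\CP^2\#(S^1\times S^3)$ from the cited $\Lambda$-form, so recomputing the form from the homeomorphism type only recovers your input. Cite one of the two and deduce the other, but do not present both as derived.
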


\begin{proof}
The descended Albanese map and the induced fundamental-group isomorphism are
given by \cite[Theorem~1.1(a)]{SS}; the quotient and branch surface are given
by \cite[Corollary~1.2]{SS}.  The identification of
the real involution with the reflection quotient is
\cite[Corollary~2.5]{SS}, and the equivariant intersection form is
\cite[Corollary~4.13]{SS}.
\end{proof}

These results describe the topological quotient.  The arguments below
determine further properties of the real Albanese map, the embedding of the
branch surface, positive spheres, and circle surgery.

In particular,
\[
 e(Y)=1,\qquad \sigma(Y)=1,\qquad
 b_1(Y)=1,\quad b_2^+(Y)=1,\quad b_2^-(Y)=0.
\]
Choose the positive generator $u\in H^2(Y;\mathbb Z)$, so $u^2=1$.

The branch surface maps to the boundary of the M\"obius band.  Indeed, if
$x\in R=X^c$, equivariance of the Albanese map gives
$\alpha(x)=\alpha(c(x))=c_E(\alpha(x))$.  Thus
$\alpha(x)\in E^{c_E}$, whose image under $q_E:E\to E/\langle c_E\rangle
=\mathcal M$ is $\partial\mathcal M$.  Consequently
$\overline\alpha(B)\subset\partial\mathcal M$.

We begin with circle surgery.  Orient a core
$\delta\subset\operatorname{int}\mathcal M$ disjoint from the critical values
of $\overline\alpha$.
The surface bundle over $\delta$ has a section: after writing it as the
mapping torus of its monodromy, any path from a point of the fiber to its
image closes to a section.  Its image gives an embedded
loop $\gamma\subset Y$ for which
$\overline\alpha|_\gamma$ has degree one.  We call $\gamma$ an
\emph{Albanese generator}.  Since the branch surface maps to
$\partial\mathcal M$, this section lies in $Y\setminus B$.

\begin{theorem}\label{thm:circle}
For either normal framing, circle surgery on an Albanese generator gives a
smooth manifold $Y_\gamma$ orientation-preservingly homeomorphic to
$\CP^2$.  The inverse image $\widetilde\gamma=q_X^{-1}(\gamma)$ is connected,
and the two framing classes downstairs pull back to the same framing class.
Surgery on $\widetilde\gamma$ gives a double branched cover
\[
 \widetilde P\longrightarrow Y_\gamma
\]
whose branch locus is $B\sqcup S$, where $S$ is the dual sphere.  Moreover,
\[
 e(\widetilde P)=5,\quad \sigma(\widetilde P)=1,\quad
 H_1(\widetilde P)\cong H_3(\widetilde P)\cong\mathbb Z,
\]
and
\[
 H_2(\widetilde P)\cong\mathbb Z^5,\qquad
 b_2^+(\widetilde P)=3,\quad b_2^-(\widetilde P)=2.
\]
\end{theorem}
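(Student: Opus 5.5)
The plan is to work first downstairs on $Y$, using Proposition~\ref{prop:input}, and then lift everything to $X$.

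\textbf{Surgery downstairs.} Let $\gamma$ be an Albanese generator. Since $\overline\alpha_*$ is an isomorphism and $\overline\alpha|_\gamma$ has degree one onto the core, $[\gamma]$ generates $\pi_1(Y)\cong\mathbb Z$. Surgery removes $S^1\times D^3$ and glues in $D^2\times S^2$, which kills the generator, so $\pi_1(Y_\gamma)=1$. The Euler characteristic rises by $2$, giving $e(Y_\gamma)=3$. Novikov additivity (surgery on a circle does not change the signature) gives $\sigma(Y_\gamma)=1$. Hence $H_2(Y_\gamma)\cong\mathbb Z$ with $b_2^+=1$.

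To show the form is $\langle1\rangle$, one could compute with the Mayer--Vietoris sequence of $Y\setminus\nu\gamma$. A cleaner route is to use $H_2(Y;\Lambda)\cong\Lambda$ with equivariant form $\langle1\rangle$: the augmentation of the generator has square $1$, and it survives in $Y_\gamma$ because $H_2(Y\setminus\nu\gamma)\to H_2(Y_\gamma)$ is onto modulo the class $[S^2]=[S]$, which is nullhomologous once $\gamma$ is primitive of infinite order. So the form is unimodular of rank one and positive, namely $\langle1\rangle$. Freedman's classification then gives $Y_\gamma\approx\CP^2$ or the Chern manifold $*\CP^2$. The Kirby--Siebenmann invariant is unchanged by surgery, and $\operatorname{ks}(Y)=0$ since $Y$ is smooth, so $Y_\gamma\approx\CP^2$. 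This holds for either framing, since nothing above depended on the choice of framing.

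\textbf{Lifting to $X$.} Because $\gamma\subset Y\setminus B$, the preimage $\widetilde\gamma$ is an unbranched double cover of $\gamma$. It is connected exactly when $\gamma$ is nontrivial under the monodromy $\pi_1(Y\setminus B)\to\mathbb Z/2$ of the branched cover. To check this, I would use that $c$ acts on $E$ as $c_E$, where $E\to\mathcal M$ is the orientation double cover of the Möbius band away from $\partial\mathcal M$. The core $\delta$ lifts to a connected loop in $E$ (the boundary of a neighborhood of the core is connected), so $\alpha(\widetilde\gamma)$ is connected and hence $\widetilde\gamma$ is connected.

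For the framings: the two framings of $\gamma$ differ by the generator of $\pi_1(SO(3))=\mathbb Z/2$. Pulling back along a degree-two cover of circles doubles this element, so both framings lift to the same framing class. Surgery on $\widetilde\gamma$ with the lifted framing is $c$-equivariant. The involution extends over $D^2\times S^2$ as the square-root type map $(z,v)\mapsto(-z,v)$ on the $D^2$ factor (rotation by $\pi$), whose fixed set is $0\times S^2=S$. The quotient of this glued-in piece is the glued-in piece downstairs, so $\widetilde P\to Y_\gamma$ is a double branched cover with branch locus $B\sqcup S$.

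\textbf{Invariants of $\widetilde P$.} From $e(X)=3$ and $e(\widetilde\gamma)=0$, surgery gives $e(\widetilde P)=3+2=5$. Signature is again unchanged, so $\sigma(\widetilde P)=1$ and therefore $b_2=5$ if $b_1=1$. For $H_1$, surgery kills the class of $\widetilde\gamma$ in $H_1(X)=\mathbb Z^2\oplus$(torsion). I would compute this from the CKY presentation of $\pi_1(X)$ together with $\alpha_*[\widetilde\gamma]$, which is the generator of the $c_E$-anti-invariant direction of the doubled core. The expected result is $H_1(\widetilde P)\cong\mathbb Z$, provided the torsion of $H_1(X)$ dies in the quotient. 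Poincaré duality then gives $H_3\cong\mathbb Z$, and $H_2$ is torsion-free of rank $5$, with $b^+=3$ and $b^-=2$.

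\textbf{Main obstacle.} The hardest step is the torsion in $H_1(X)$ and the exact class $[\widetilde\gamma]$. Since $\alpha_*[\widetilde\gamma]$ is primitive and the kernel of $\alpha_*$ is the torsion subgroup, one needs a splitting argument to show that killing $[\widetilde\gamma]$ leaves exactly $\mathbb Z$ with no residual torsion. I would attempt this through the explicit finite-quotient lattice calculation used for part (1).
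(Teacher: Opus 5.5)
Your treatment of $Y_\gamma$ matches the paper: simple connectivity, $e=3$, $\sigma=1$, a rank-one positive unimodular form, then Freedman plus $\operatorname{ks}=0$. So do the framing pullback through the degree-two cover and the branched extension $(z,x)\mapsto(-z,x)$ over $D^2\times S^2$.

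\textbf{Connectedness of $\widetilde\gamma$.} The step is argued backwards. Knowing that $\alpha(\widetilde\gamma)$ lies in the connected circle $q_E^{-1}(\delta)$ says nothing about whether $\widetilde\gamma$ is connected, since two components can map into the same connected set. What is needed is a lifting argument. Suppose $\widetilde\gamma$ had a component $\gamma_1$ mapping homeomorphically onto $\gamma$. Then $\alpha\circ(q_X|_{\gamma_1})^{-1}$ would be a closed lift of the degree-one loop $\overline\alpha|_\gamma$ through the orientation double cover $E\setminus E^{c_E}\to\operatorname{int}\mathcal M$. That is impossible for an odd power of the core. This is what the Cartesian square in Theorem~\ref{thm:covermonodromy} encodes.

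\textbf{The computation of $H_1(\widetilde P)$.} This is the more serious gap: you leave it open, and the obstacle you name is not the real one.
\begin{itemize}
\item The missing input is that $H_1(X;\Z)\cong\Z^2$ is torsion free and $\alpha_*:H_1(X;\Z)\to H_1(E;\Z)$ is an integral isomorphism. This is the Cartwright--Koziarz--Yeung result that the paper records in Section~2 and in the proof of Proposition~\ref{prop:fixed-subgroup}. There is no torsion to kill.
\item Your splitting strategy could not have worked in any case. Suppose $H_1(X)\cong\Z^2\oplus T$ and $[\widetilde\gamma]=(v,\tau)$ with $v$ primitive. Extend $v$ to a basis $v,w$ and use the automorphism $(v,0)\mapsto(v,\tau)$, $(w,0)\mapsto(w,0)$, identity on $T$. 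This shows the quotient is $\Z\oplus T$, so torsion never dies when the killed class has primitive image.
\item You assert primitivity of $\alpha_*[\widetilde\gamma]$ without proof, and you place it in the wrong eigenspace. The involution $c$ acts on the connected double cover $\widetilde\gamma\to\gamma$ as the half-rotation, which preserves orientation. Hence $c_*[\widetilde\gamma]=[\widetilde\gamma]$, and $\alpha_*[\widetilde\gamma]$ is $c_E$-invariant, not anti-invariant.
\end{itemize}

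The paper's argument is a degree count. Since $\alpha(\widetilde\gamma)\subset\widetilde\delta=q_E^{-1}(\delta)$, commutativity gives $\deg(q_E|_{\widetilde\delta})\deg(\alpha|_{\widetilde\gamma})=\deg(\overline\alpha|_\gamma)\deg(q_X|_{\widetilde\gamma})$, that is, $2\deg(\alpha|_{\widetilde\gamma})=2$. Therefore $\alpha_*[\widetilde\gamma]=[\widetilde\delta]=e_1$, the primitive generator of $H_1(E;\Z)^{c_E}$; the core lift is parallel to the real circle. It follows that $H_1(\widetilde P)\cong H_1(X)/\langle[\widetilde\gamma]\rangle\cong\Z$ is torsion free. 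That is exactly what Poincar\'e duality and the universal coefficient theorem need to give $H_3\cong\Z$ and torsion-free $H_2$. Only then do $e=5$ and $\sigma=1$ yield $H_2\cong\Z^5$ with $b_2^+=3$ and $b_2^-=2$.
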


The branch surface gives a stronger restriction on the positive generator.

\begin{theorem}\label{thm:branch}
One has $[B]=0$ in $H_2(Y;\mathbb Z/2)$.  If a smoothly embedded
$+1$-sphere $E$ represents the positive generator, then every transverse
perturbation of $E$ meets $B$ in at least six points.  Put
\[
 a=q_X^*u\in H^2(X;\mathbb Z),\qquad
 A=\operatorname{PD}_X^{-1}(a).
\]
Then
\[
 A^2=2,\qquad H_2(X;\mathbb Q)^{c_*}=\mathbb Q A,\qquad K_X\cdot A=0.
\]
Every connected embedded oriented representative of $A$ has genus at least
two, and $a$ is not of type $(1,1)$.
\end{theorem}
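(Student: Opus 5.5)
Throughout, $q=q_X$.

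\emph{Step 1: $[B]=0$ mod two.} A double branched cover exists with branch locus $B$, so $[B]\in H_2(Y;\mathbb Z/2)$ is divisible by two in the sense that $\operatorname{PD}[B]=2L$ for the line bundle class $L\in H^2(Y;\mathbb Z/2)$ defining the cover on $Y\setminus B$. This argument only needs an integral lift; with $\mathbb Z/2$ coefficients it is not enough. The cleaner route is to compute $[B]\cdot u \bmod 2$. Since $H_2(Y;\mathbb Z/2)\cong\mathbb Z/2$ is detected by pairing with $u$, it suffices to show $B\cdot\Sigma\equiv 0$ for a surface $\Sigma$ representing $\operatorname{PD}(u)$ mod two. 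By Wu's formula, $[B]^2\equiv w_2(Y)\cdot[B]$. Moreover $[B]^2$ mod two equals $e_N(B)$ mod two. The signature relation $\sigma(X)=2\sigma(Y)-\tfrac12e_N(B)$ with $\sigma(X)=\sigma(Y)=1$ gives $e_N(B)=2$, so $[B]^2\equiv0$. Since $Y$ is not spin ($u^2=1$ odd), $w_2$ is the reduction of $u$, and therefore $[B]\cdot u\equiv[B]^2\equiv0$. Hence $[B]=0$.

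\emph{Step 2: homological facts about $A$.} We have $a^2=q^*u\cdot q^*u=2u^2=2$, since $q$ has degree two. The transfer identifies $H^2(X;\mathbb Q)^{c^*}$ with $q^*H^2(Y;\mathbb Q)=\mathbb Q a$. Here $c$ preserves orientation, and we must account for whether $c^*$ acts on $H^2$ versus $H_2$ with the same sign; since $\operatorname{PD}$ commutes with an orientation-preserving map, the two invariant spaces match, giving $H_2(X;\mathbb Q)^{c_*}=\mathbb Q A$. For $K_X\cdot A=0$: an antiholomorphic involution satisfies $c^*K_X=-K_X$, so $K_X$ lies in the $(-1)$-eigenspace. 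That eigenspace is orthogonal to the invariant space under the $c$-invariant form, so $K_X\cdot A=0$. For the type claim, $\overline{c^*\omega}$-type considerations show that $c^*$ maps $H^{1,1}$ to itself, with an extra sign coming from the action on the Kähler class $\omega\mapsto-c^*\omega$. If $a$ were of type $(1,1)$ then, with $a^2=2>0$ and $b^+(X)=1$, Hodge index forces $\omega\cdot a\neq0$. But $\omega$ can be chosen anti-invariant (average $\omega-c^*\omega$), and then $\omega\cdot a=0$, a contradiction.

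\emph{Step 3: genus bound and the six-point bound.} Suppose $\Sigma\subset X$ is connected, embedded, oriented, of genus $g\le1$, and represents $A$ with $A^2=2>0$. Since $b^+(X)=1$ and $X$ is minimal of general type, the Seiberg--Witten basic class $\pm K_X$ gives the adjunction inequality in the chamber containing positive-square classes. For $g=0$, a positive-square sphere violates the SW sphere obstruction (Fintushel--Stern) because $X$ has nonzero SW invariant in the relevant chamber. For $g\geq1$, adjunction gives $2g-2\ge A^2+|K_X\cdot A|=2$, hence $g\ge2$. Verifying that the wall-crossing caveat in $b^+=1$ allows applying this to a class with $K\cdot A=0$ is the main obstacle. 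I would try to handle it using $b_1=2$ and the choice of chamber determined by $A$ itself, with $A^2>0$ and $A\cdot K=0$ on the boundary, perhaps after perturbing with a symplectic form.

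Now let $E$ be a $+1$-sphere transverse to $B$ with $k$ intersection points. Since $[B]\cdot u\equiv0$, $k$ is even. The preimage $\widetilde E=q^{-1}(E)$ is a double cover of $S^2$ branched at $k$ points, so it has genus $k/2-1$ and represents $A$. It is connected when $k>0$. When $k=0$, $\widetilde E$ is connected if the cover is nontrivial over $E$, and otherwise gives two disjoint spheres of square one, which the sphere obstruction excludes. Therefore $k/2-1\ge2$, which gives $k\ge6$.
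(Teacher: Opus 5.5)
Your route matches the paper's in essentially every step:
\begin{itemize}
\item $e_N(B)=2$ from the signature formula gives $[B]^2\equiv0$, hence $[B]=0$;
\item transfer gives $H_2(X;\mathbb Q)^{c_*}=\mathbb Q A$ and $A^2=2$;
\item anti-invariance of $K_X$ gives $K_X\cdot A=0$;
\item the Seiberg--Witten sphere obstruction plus adjunction give genus at least two, which the lifted surface of genus $k/2-1$ turns into $k\geq6$;
\item an anti-invariant K\"ahler form with Hodge index excludes type $(1,1)$.
\end{itemize}

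\textbf{The gap.} It is in Step~3, which you explicitly leave open, and it rests on a false premise: $b_2^+(X)$ is not $1$. From $e(X)=3$, $\sigma(X)=1$ and $b_1(X)=2$ one gets $b_2(X)=5$, hence $b_2^+(X)=3$ (equivalently $2p_g+1$ with $p_g=1$).

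If $b_2^+$ were $1$, the tools would not apply as you use them. The sphere obstruction and the adjunction inequality would hold only in a chamber selected by the surface, and a nonzero invariant in another chamber obstructs nothing. For example, the line in $\CP^2$ is a $+1$-sphere although $\CP^2$ has a chamber with nonzero invariant.

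Your premise also contradicts your own Step~2. With one positive direction, the orthogonal complement of $K_X$ (where $K_X^2=9>0$) is negative definite, so $K_X\cdot A=0$ would force $A^2<0$, not $A^2=2$. The ``main obstacle'' you flag is an artifact of this error.

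\textbf{How to close it.} With $b_2^+(X)=3$, the canonical class has a nonzero, chamber-independent invariant, and there is no wall crossing.
\begin{itemize}
\item The sphere case follows from the Fintushel--Stern lemma. The paper states it for square zero and reduces square $n>0$ to that case by blowing up $n$ points on the sphere and using the blow-up formula.
\item The case $g\geq1$ follows from $2g-2\geq A^2+|K_X\cdot A|$.
\item The one real technicality is $b_1(X)=2>0$, which you mention only in connection with chambers. The paper uses the Ozsv\'ath--Szab\'o adjunction inequality valid for $b_1>0$, in the degree-zero case since $d(K_X)=(K_X^2-2e(X)-3\sigma(X))/4=0$. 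It also notes that the Fintushel--Stern argument needs only a nonzero chamber-independent invariant.
\end{itemize}

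The Hodge-index sentence in Step~2 must likewise rest on the Hodge index theorem on $H^{1,1}_{\mathbb R}$, not on $b_2^+$. Since $b_2^+=3$, there are positive classes orthogonal to $\omega_-$, namely real classes in $H^{2,0}\oplus H^{0,2}$. So the $(1,1)$ hypothesis is exactly what is being used.

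\textbf{A remark on Step~1.} The idea you discarded at the start does work over $\mathbb Z/2$. The cover $X\setminus R\to Y\setminus B$ is a class in $H^1(Y\setminus B;\mathbb Z/2)$. Because every meridian of $B$ has nontrivial monodromy, its coboundary in $H^2(Y,Y\setminus B;\mathbb Z/2)\cong H^0(B;\mathbb Z/2)$ is $1$; this uses the mod-two Thom isomorphism, which is valid for nonorientable $B$. Exactness then forces $\operatorname{PD}[B]_2=0$ without the signature formula. Your Wu-formula argument is also correct.
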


Finally, a sphere representing the positive generator would give a
constrained blowdown.

\begin{theorem}\label{thm:blowdown}
If the positive generator is represented by a smooth $+1$-sphere $E$, its
blowdown $Q_E$ satisfies
\[
 \pi_1(Q_E)\cong\mathbb Z,\qquad
 H_0(\widetilde Q_E)\cong H_3(\widetilde Q_E)
       \cong\Lambda/(t-1),
 \qquad H_i(\widetilde Q_E)=0\quad(i\ne0,3).
\]
Consequently,
\[
 Q_E\simeq S^1\times S^3,\qquad Y\cong Q_E\#\CP^2.
\]
\end{theorem}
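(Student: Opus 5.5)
Suppose $E\subset Y$ is a smooth $+1$-sphere representing the positive generator. We blow it down: a tubular neighbourhood $\nu E$ is the $+1$ disk bundle over $S^2$, whose boundary is $S^3$, so $Y=Q_E\#\CP^2$ with $Q_E=(Y\setminus\nu E)\cup D^4$. This already gives the diffeomorphism $Y\cong Q_E\#\CP^2$ in the statement.

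For the fundamental group, van Kampen gives $\pi_1(Y)\cong\pi_1(Q_E)*\pi_1(\CP^2)=\pi_1(Q_E)$, hence $\pi_1(Q_E)\cong\mathbb Z$ by Proposition~\ref{prop:input}. For the remaining invariants, additivity yields
\[
e(Q_E)=e(Y)-e(\CP^2)=0,\qquad \sigma(Q_E)=0,\qquad H_2(Q_E;\mathbb Z)=0 .
\]

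Next we pass to the infinite cyclic cover. Connected sum along a ball lifts, and a cover of a connected sum with a simply connected summand is a $\mathbb Z$-family of copies of that summand. Hence $\widetilde Y$ is $\widetilde Q_E$ connected-summed with one copy $t^k\CP^2$ for each $k\in\mathbb Z$. Mayer--Vietoris in $\Lambda$-coefficients then gives
\[
H_2(Y;\Lambda)\cong H_2(\widetilde Q_E)\oplus\Lambda\cdot[\widetilde E]
\]
as $\Lambda$-modules, with the lifts of $E$ contributing the free summand. Proposition~\ref{prop:input} gives $H_2(Y;\Lambda)\cong\Lambda$ with form $\langle1\rangle$, and the equivariant self-intersection of $[\widetilde E]$ is $1$, a unit. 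So $\Lambda[\widetilde E]$ is a free rank-one summand whose orthogonal complement is $H_2(\widetilde Q_E)$.

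\textbf{Main obstacle.} We must show this complement vanishes, that is, a direct summand $M$ of $\Lambda$ with $\Lambda\cong\Lambda\oplus M$ must be zero. I would argue via rank and then projectivity. Tensoring with $\mathbb Q(t)$ shows $M$ has rank zero. A direct summand of a free module is projective, and a projective module of rank zero over the domain $\Lambda$ is zero. Alternatively, $M$ is a projective, hence torsion-free, module of rank zero, so again $M=0$.

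The remaining homology of $\widetilde Q_E$ comes from standard facts. We have $H_1(\widetilde Q_E)=0$ because $\widetilde Q_E$ is the universal cover. Also $H_0(\widetilde Q_E)\cong\Lambda/(t-1)$. For $H_3$ and $H_4$, we use equivariant Poincaré duality for the noncompact cover, $H_i(\widetilde Q_E)\cong H^{4-i}_c(Q_E;\Lambda)$, together with the universal coefficient spectral sequence. This gives $H_4=0$ and $H_3\cong\overline{\operatorname{Ext}^1_\Lambda(H_0,\Lambda)}\cong\Lambda/(t-1)$. The contributions from $H_1$ and $H_2$ vanish.

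For the homotopy equivalence, let $f:S^1\times S^3\to Q_E$ send the circle factor to a generator of $\pi_1(Q_E)$. We must also realize the top class. Since $\widetilde Q_E$ is simply connected with homology only in degrees $0$ and $3$, where $H_3\cong\mathbb Z$, Hurewicz gives $\pi_3(\widetilde Q_E)\cong\mathbb Z$, generated by an embedded $3$-sphere lifting a nonseparating $S^3$ in $Q_E$. So we can build $Q_E$-maps $S^1\vee S^3\to Q_E$ and extend over the top cell, adjusting degree via the $\pi_3$ class. The lift of the resulting map induces an isomorphism on all homology groups of the infinite cyclic covers, since both covers have homology $\mathbb Z$ in degrees $0$ and $3$ with matching generators. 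Whitehead's theorem applied to the covers then shows $f$ is a homotopy equivalence, and it is oriented after a choice of sign. I expect the summand-cancellation step and the construction of the top-degree map to need the most care.
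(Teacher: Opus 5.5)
Your argument is correct in substance, but it follows a different route from the paper at every stage.

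\textbf{Comparison with the paper.} The paper works with the pair $(\widetilde Y,A)$, where $A=p^{-1}(Y\setminus\operatorname{int}\nu E)$. It uses the equivariant form $\langle1\rangle$ to show that $[\widetilde E]$ is $\pm t^k$ times a generator, and deduces $H_1(A)=H_2(A)=0$. It then compares the two boundary maps $\delta_Y,\delta_Q$ to get $H_3(\widetilde Q_E)\cong H_3(\widetilde Y)$, and evaluates the latter through the Stipsicz--Szab\'o homeomorphism $Y\approx\CP^2\#(S^1\times S^3)$. The homotopy type then comes from $\widetilde Q_E\simeq S^3$ and the Borel construction, which makes $Q_E$ homotopy equivalent to the mapping torus of a degree-one, hence identity-homotopic, self-map of $S^3$.

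You instead split the cover as $\widetilde Q_E$ summed with $\Z$-many copies of $\CP^2$. You then cancel $\Lambda\cong H_2(\widetilde Q_E)\oplus\Lambda$, using that a rank-zero submodule of $\Lambda$ is zero; this needs only the module $H_2(Y;\Lambda)\cong\Lambda$, not the form. You get $H_3$ and $H_4$ from equivariant duality and the universal coefficient spectral sequence using only $\pi_1(Q_E)\cong\Z$, so the homeomorphism type of $Y$ is never used. Finally, you build an explicit map $S^1\times S^3\to Q_E$ and apply Whitehead to the covers. Your route is more intrinsic and uses less input. The paper's avoids cell-extension obstruction theory and also yields the vanishing of $H_1,H_2$ of the lifted complement.

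\textbf{Points to repair.} First, drop the claim that the generator of $\pi_3(\widetilde Q_E)$ is an embedded sphere lifting a nonseparating $S^3\subset Q_E$. Nothing you have proved produces such a sphere, and a map $g\colon S^3\to\widetilde Q_E$ representing the Hurewicz generator is all you need.

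Second, extending over the top cell is not a matter of adjusting degree. In $S^1\times S^3=(S^1\vee S^3)\cup e^4$ the $4$-cell is attached by the Whitehead product $[\iota_1,\iota_3]$, which in $\pi_3(S^1\vee S^3)\cong\Lambda\iota_3$ is $\pm(t-1)\iota_3$. Its image in $\pi_3(Q_E)\cong H_3(\widetilde Q_E)$ is $\pm(t-1)g$. This vanishes exactly because $t$ acts trivially on $H_3(\widetilde Q_E)\cong\Lambda/(t-1)$, which you have already shown, and that is the fact your sketch should invoke. Once the map is extended, degree $\pm1$ is automatic because the lift is a homology isomorphism.

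Two minor slips: $e(Q_E)=e(Y)-e(\CP^2)+2$, not $e(Y)-e(\CP^2)$. And identifying $Y$ with $Q_E\#\CP^2$ for an arbitrary gluing of $D^4$ uses Cerf's theorem, as the paper notes.
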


Proposition~\ref{prop:input} supplies the topological quotient data.  The real Albanese calculation below strengthens this input by determining the oval number and the subgroup represented by the branch surface.  The later sections use these data to study circle surgery, positive spheres, and the result of blowing down a positive sphere.

The real involution is induced on the complex ball by the antiunitary
reflection $s(z)=K\overline z$, where
\begin{equation}\label{eq:reflection-matrix}
 K=\begin{pmatrix}
 -1&-1+\zeta_{12}+\zeta_{12}^2-\zeta_{12}^3&0\\
 0&\zeta_{12}^3&0\\
 0&0&-1
 \end{pmatrix}.
\end{equation}

We use the integral homology basis and period lattice of
\cite{CKY}; in this normalization $\alpha_*:H_1(X;\mathbb Z)\to
H_1(E;\mathbb Z)$ is an isomorphism.  We also use the smoothness of the
three invariant fibers \cite{KY,Rito} and the fact that every Albanese fiber
is irreducible and reduced \cite[Section~1.6]{Keum}.

\section{The cut cobordism and Reeb graph}\label{sec:real-map-start}

The quoted fixed-set calculation does not determine the cut cobordism or
the Reeb graph.  Both are determined by the single saddle.
We record a general statement for circle-valued Morse functions on
$N_3=\#_3\RP^2$.

\begin{theorem}\label{thm:reeb}
Let $f:N_3\to S^1$ be a surjective Morse map with exactly one critical
point, of index one.  Let $t_0$ be a regular value and put
$k=|\pi_0(f^{-1}(t_0))|$.  Then $k>0$ and:
\begin{enumerate}
\item the unique saddle is the nonorientable self-attachment of a
two-dimensional $1$-handle and preserves the number of components of a
regular level;
\item cutting $N_3$ along a regular level gives
\[
 \left(\RP^2\setminus(D^2\sqcup D^2)\right)
 \sqcup\coprod_{j=1}^{k-1}(S^1\times[0,1]);
\]
\item the boundary-gluing identification cyclically permutes the $k$
components;
\item the Reeb graph $\mathcal R_f$ is a circle, and the induced map
$\mathcal R_f\to S^1$ is a topological covering of degree $k$.
\end{enumerate}
If $\eta$ generates $H^1(S^1;\Z)$, then
\[
                         f^*\eta=k\beta
\]
for a primitive class $\beta\in H^1(N_3;\Z)$.  Thus $k$ is exactly the
divisibility of $f^*\eta$.
\end{theorem}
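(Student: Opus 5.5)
The plan is to cut $N_3$ along the regular level $L=f^{-1}(t_0)$ and run an Euler characteristic and Morse count on the resulting cobordism. Since $f$ is surjective, $L$ is nonempty, so $k>0$. It is a disjoint union of $k$ circles. Cutting along $L$ gives a compact surface $W$ with $\chi(W)=\chi(N_3)=-1$. The function $f$ lifts to a Morse function $\tilde f:W\to[0,1]$ whose boundary is $\partial_0W\sqcup\partial_1W$, each a copy of $L$, and $\tilde f$ has exactly one critical point, of index one. Each component of $W$ meets both ends, because a component with no critical point is a product, and a component disjoint from one end would need a local extremum. Consequently the component $W_0$ containing the saddle satisfies $\chi(W_0)=-1$, and every other component is a product annulus $S^1\times[0,1]$.

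Next I classify $W_0$. It is obtained from a collar of some circles in $\partial_0W$ by attaching a single $1$-handle. The possibilities are as follows. If the handle joins two circles, the component count drops by one. If it splits one circle orientably, the count rises by one. If it is attached nonorientably to one circle, the count is unchanged and $W_0$ is a Möbius band with a hole, that is $\RP^2\setminus(D^2\sqcup D^2)$. In the first two cases the pairs of pants have $2+1$ boundary circles, so $|\partial_0W|\neq|\partial_1W|$. This contradicts the fact that both ends are copies of $L$ with $k$ components. Hence only the nonorientable self-attachment occurs, which proves (1) and (2); as a check, $\chi(W_0)=-1$. The nonorientability of $N_3$ is consistent with this, but the component count alone already forces it.

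For (3), the gluing map $\phi:\partial_1W\to\partial_0W$ induces a permutation $\pi$ of the $k$ components of $L$, where the component of $\partial_1$ is matched with that of $\partial_0$ through $W$. Since $N_3$ is connected, the graph on components of $W$ with edges given by the gluing must be connected. That graph is a union of cycles given by $\pi$, so $\pi$ is a single $k$-cycle.

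For (4), the Reeb graph collapses each component of each level. Over $S^1\setminus\{t_0\}$ the components of $W$ are intervals in the Reeb graph, including the saddle component, since its levels stay connected. They glue cyclically by (3), so $\mathcal R_f$ is a circle and the map to $S^1$ is a $k$-fold covering. Finally, $f$ factors as $N_3\xrightarrow{r}\mathcal R_f\xrightarrow{p}S^1$, which gives $f^*\eta=r^*p^*\eta=k\,r^*\theta$ with $\theta$ the generator of $H^1(\mathcal R_f)$. It remains to show $\beta=r^*\theta$ is primitive, which I expect to be the main point needing care. I would evaluate $\beta$ on a loop running once around $\mathcal R_f$: starting on one circle of $L$, cross $W$ along a component, pass through all $k$ components using (3), and return. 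This loop $\ell$ satisfies $r_*[\ell]=$ generator, so $\beta(\ell)=1$, and $\beta$ is primitive. Thus $k$ is the divisibility of $f^*\eta$.
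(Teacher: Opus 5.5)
Your proposal is correct and follows essentially the same route as the paper: cut along a regular level, use the component count to force a twisted 1-handle with both feet on one circle, get a $k$-cycle from connectedness, identify $\mathcal R_f$ as a circle covering $S^1$ with degree $k$, and prove primitivity with a loop that crosses all $k$ pieces and goes once around $\mathcal R_f$. The only differences are minor. You obtain $\chi(W_0)=-1$ from $\chi(W)=\chi(N_3)$ rather than from the single handle, and you make $k>0$ explicit via surjectivity.
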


\begin{proof}
Cut the target at a regular value disjoint from the critical value and let
$W$ be its inverse image.  Both horizontal boundaries of $W$ have $k$
circles.  There are no components disjoint from the horizontal boundary,
since such a component would force an additional local extremum.  Away from
the critical level, Ehresmann's theorem gives product collars, and the unique
critical point adds one relative $1$-handle.  If its feet lie on distinct
level circles, the saddle merges two components; if an untwisted handle has
both feet on one circle, it splits that circle.  Either operation changes the
number of outgoing circles by one.  Since the incoming and outgoing ends both
have $k$ components, the handle has both feet on the same circle and is
twisted.  Thus every component not containing the critical point meets each
horizontal boundary in one circle and is a product annulus.  The critical
component also meets each horizontal boundary in one circle, is
nonorientable, and has Euler characteristic $-1$; it is therefore
$\RP^2\setminus(D^2\sqcup D^2)$.

Regluing the outgoing boundary circles to the incoming ones gives a
permutation of the $k$ components.  Since $N_3$ is connected, this
permutation is transitive, hence is a $k$-cycle.  The boundary-gluing
diffeomorphisms contain further orientation data, but this does not affect
the orbit decomposition of the set of components.  Collapsing each connected
level component now gives a circle with one marked valence-two vertex.  In
the local Reeb model supplied by the index-one Morse normal form, the
critical component remains connected on both sides.  The quotient is
therefore locally an interval at the critical level, and the height map is
locally one-to-one at its
valence-two vertex.  The map induced by $f$ on this
Reeb circle therefore has $k$ preimages over every regular point and is a
topological covering of degree $k$.

Write $p:N_3\to\mathcal R_f$ for the Reeb quotient and
$h:\mathcal R_f\to S^1$ for the induced covering.  Thus $f=h\circ p$.
A transverse arc from the incoming to the outgoing boundary can be chosen in
each product annulus.  In the critical component choose such an arc avoiding
the stable and unstable separatrices.  The cyclic boundary gluing joins these
arcs to a loop which maps once around $\mathcal R_f$.  Hence $p_*$ is
surjective on fundamental groups and on first homology, so $p^*\eta$ is
primitive.  Since $h^*\eta=k\eta$, the
last assertion follows.
\end{proof}

The class $f^*\eta$ is nonzero.  Indeed, if it vanished, then $f$ would lift
to a real-valued function on the closed surface $N_3$, which would have a
minimum and a maximum, contrary to the one-critical-point hypothesis.  Its
divisibility is therefore understood in the free abelian group
$H^1(N_3;\Z)\cong\Z^2$.

Every $k\geq1$ occurs for some such Morse map.  Start with the one-saddle
Morse cobordism $\RP^2\setminus(D^2\sqcup D^2)$ from one circle to one circle
and glue its boundary circles.  This gives $N_3$ and a primitive
circle-valued map.  Composing with the degree-$k$ covering of the circle gives
the claimed examples.  Thus the topology of $N_3$ alone does not force
$k=1$.

\begin{corollary}\label{cor:real-monodromy}
For the real Albanese map, let $k$ be the number of ovals of a smooth real
fiber.  Then the boundary-gluing permutation is a $k$-cycle, the real saddle
is a nonorientable self-attachment, and
\[
 k=\divis\left(\alpha_{\R}^*\eta\right),
 \qquad 1\leq k\leq20.
\]
In particular, the oval number can be determined by computing the
divisibility of the restricted Albanese class.
\end{corollary}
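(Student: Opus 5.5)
The plan is to apply Theorem~\ref{thm:reeb} to $f=\alpha_{\R}$, after identifying the target $E(\R)$ with a circle. First I would check the hypotheses.

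By Proposition~\ref{prop:input}, $X(\R)=R\cong N_3$. By \cite{SS}, $\alpha_{\R}$ has exactly one critical point, and it is a saddle, so it is Morse of index one. The map is surjective onto a component of $E(\R)$: it has no local extrema, so its image is open and closed in the component it meets. If $E(\R)$ has two components, I would restrict attention to the one hit by the connected surface $R$ and call it $S^1$. With these hypotheses in place, Theorem~\ref{thm:reeb} gives three conclusions directly: the boundary-gluing permutation is a $k$-cycle, the saddle is a nonorientable self-attachment, and $k=\divis(\alpha_{\R}^*\eta)$ with $k>0$. Here $k$ is the number of components of a regular level, which is exactly the number of ovals of the smooth real fiber $\alpha^{-1}(t)\cap X(\R)$.

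It remains to prove the upper bound $k\le 20$. A smooth Albanese fiber $F$ is a real curve of genus $19$, because the fiber over a real point is $c$-invariant. Its real locus is $F(\R)=F\cap R$, and by transversality of a regular value this is a disjoint union of $k$ circles. Harnack's inequality then gives $k\le g(F)+1=20$. One point needs care: regular values of $\alpha_{\R}$ on $R$ should correspond to smooth fibers. I would handle this by choosing $t_0$ away from the finitely many critical values of $\alpha$ on $X$. Such points are dense in the circle, and $k$ is constant over regular values by Theorem~\ref{thm:reeb}, since all regular levels have the same number of components.

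The main obstacle is this compatibility between the two notions of regularity. A value $t$ that is regular for $\alpha_{\R}$ might still have a singular complex fiber, with its singularities off the real locus. This does no harm: the count $k$ from Theorem~\ref{thm:reeb} is the same for every regular value, so it suffices to evaluate it at a $t_0$ whose complex fiber is smooth. At such a $t_0$, $F(\R)$ is a smooth real curve and Harnack's inequality applies. Finally, the statement that the oval number is determined by the divisibility is just the equality $k=\divis(\alpha_{\R}^*\eta)$ read in the other direction.
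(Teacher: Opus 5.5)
Your proposal is correct and follows the paper's proof. The paper likewise takes $X(\mathbb R)\cong N_3$, surjectivity, and the single nondegenerate saddle from Stipsicz--Szab\'o, applies Theorem~\ref{thm:reeb}, and bounds $k$ by Harnack's inequality in genus $19$. Your extra steps are harmless refinements of points the paper cites or takes for granted: you derive surjectivity from the absence of local extrema, and you evaluate $k$ at a value whose complex fiber is smooth so that Harnack applies.
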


\begin{proof}
Stipsicz and Szab\'o prove that $X(\mathbb R)\cong N_3$, that
$\alpha_{\mathbb R}:X(\mathbb R)\to E(\mathbb R)$ is onto, and that it has
exactly one critical point, which is a nondegenerate saddle \cite{SS}.
Thus it is a circle-valued Morse map with one critical point of index one,
and every hypothesis of Theorem~\ref{thm:reeb} holds.  Apply that theorem
and use Harnack's inequality for a real curve of genus $19$.
\end{proof}

There is an equivalent group-theoretic formulation.  Write
$X=\Pi\backslash\mathbb B^2$, let $s$ be the reflection in
\eqref{eq:reflection-matrix}, and put $\phi(g)=sgs^{-1}$.  In the homology
coordinates
\[
 [a_1]=(1,3),\qquad [a_2]=(-2,1),\qquad [a_3]=(-1,-1),
\]
the invariant lattice is $H_1(X;\Z)^c=\Z(1,-1)$.

\begin{proposition}\label{prop:fixed-subgroup}
Let $\operatorname{ab}:\Pi\to H_1(X;\Z)$ be abelianization.  Then
\[
 k=[\Z(1,-1):\operatorname{ab}(\Fix\phi)].
\]
Equivalently, if $\ell(x,y)=-x+y$, then
\[
 k=\frac12\gcd\{\lvert\ell(\operatorname{ab}g)\rvert:
                         g\in\Fix\phi\}.
\]
Thus the oval problem is reduced to a fixed-subgroup, or centralizer,
calculation in the explicit Cartwright-Steger presentation.
\end{proposition}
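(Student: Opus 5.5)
The strategy is to compute $k=\divis(\alpha_\R^*\eta)$ from Corollary~\ref{cor:real-monodromy} by comparing the image of $H_1(R;\Z)$ in $H_1(X;\Z)$ with the fixed subgroup of $\phi$. Write $R=X^c\cong N_3$ and $\iota:R\hookrightarrow X$. The divisibility of $\alpha_\R^*\eta$ in $H^1(R;\Z)$ equals the index of $(\alpha_\R)_*H_1(R;\Z)$ in $H_1(E(\R);\Z)\cong\Z$ (more precisely, in the class of the real component circle of $E(\R)$ that is hit). Since $\alpha_*:H_1(X;\Z)\to H_1(E;\Z)$ is an isomorphism in the chosen normalization, and $\alpha_\R=\alpha\circ\iota$, we get
\[
k=[H_1(E(\R))\text{-generator line}:\alpha_*\iota_*H_1(R)].
\]
The first step is therefore to identify this line. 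The image of $H_1(E(\R);\Z)\to H_1(E;\Z)$ is primitive and $c$-invariant. Pulling it back through $\alpha_*$ lands in $H_1(X;\Z)^c=\Z(1,-1)$, and the claim to check is that it equals that whole lattice. For this I would use that $\alpha_*$ is an isomorphism commuting with $c_*$: the $c_E$-invariant sublattice of $H_1(E;\Z)$ has rank one, and a real circle of $E(\R)$ represents a primitive invariant class. So $k=[\Z(1,-1):\iota_*H_1(R;\Z)]$.

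The second step is to realize $\iota_*\pi_1(R)$ group-theoretically. Fix a lift $\tilde x_0\in\mathbb B^2$ of a point $x_0\in R$ chosen so that $s\tilde x_0=\tilde x_0$, i.e.\ take the reflection $s$ itself to fix the lift. A loop in $R$ based at $x_0$ lifts to a path from $\tilde x_0$ to $g\tilde x_0$, and it can be reflected by $s$, which gives $sgs^{-1}=g$. Hence $\iota_*\pi_1(R,x_0)\subseteq\Fix\phi$. Conversely, for $g\in\Fix\phi$ the fixed set of the antiholomorphic isometry $s$ is a totally geodesic real plane, hence connected and convex. So $g$ preserves $\mathrm{Fix}(s)$, and the geodesic segment from $\tilde x_0$ to $g\tilde x_0$ lies in $\mathrm{Fix}(s)$ and projects to a loop in $R$ representing $g$. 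This gives equality $\iota_*\pi_1(R)=\Fix\phi$. The only care needed is that the component of $R$ through $x_0$ is all of $R$, which holds since $R\cong N_3$ is connected. Abelianizing yields $k=[\Z(1,-1):\operatorname{ab}(\Fix\phi)]$.

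The second formula then follows directly. On the invariant lattice, $\ell(x,-x)=-2x$, so $\ell$ maps $\Z(1,-1)$ isomorphically onto $2\Z$. The index of a subgroup is then half the gcd of the values $|\ell|$ on it, and it suffices to range over the generators $\operatorname{ab}(g)$ for $g\in\Fix\phi$.

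I expect the main obstacle to be the first step, namely verifying that the real circle hit by $\alpha_\R$ gives exactly the generator of $\alpha_*(\Z(1,-1))$ rather than a multiple of it. When $E(\R)$ has two components, one must also check which component is hit. I would settle this with the explicit period lattice of \cite{CKY} together with the action of $K$ on $H_1$, computing $c_{E*}$ on the lattice and checking that the real circle class is primitive and invariant.
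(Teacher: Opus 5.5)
Your proposal is correct and follows essentially the paper's argument. You identify $\iota_*\pi_1(X(\R))$ with $\Fix\phi$ by working in $\Fix(s)\subset\mathbb B^2$; the paper phrases this as $X(\R)=C_\Pi(s)\backslash\Fix(s)$ with $\Fix(s)$ simply connected, while your path lifting plus geodesic convexity suffices because only the image is needed. You then combine this with the integral isomorphism $\alpha_*$ and the divisibility formula of Corollary~\ref{cor:real-monodromy}, as the paper does.

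The step you flag as the main obstacle needs no period computation. Since $E/\langle c_E\rangle$ is a M\"obius band, $E(\R)$ is a single circle, namely the image of the fixed line of the linear involution $z\mapsto\omega\overline z$. The translation lattice of that line is exactly the invariant lattice, so its class is the primitive generator and no factor of two enters.
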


\begin{proof}
Put $P=\Fix(s)\subset\mathbb B^2$ and choose the lift of the real base point
in $P$.  The stabilizer of $P$ in $\Pi$ is
\[
 C_\Pi(s)=\{g\in\Pi:sgs^{-1}=g\}=\Fix\phi.
\]
Indeed, an element commuting with $s$ preserves $P$.  Conversely, if $gP=P$,
then $gsg^{-1}$ and $s$ are reflections fixing $P$ pointwise, so their
product is a holomorphic isometry fixing the maximal totally real plane $P$
pointwise.  At a point of $P$, its derivative is the identity on $T_pP$ and,
by complex linearity, on $T_p\mathbb B^2=T_pP\otimes_{\R}\mathbb C$.
A ball isometry is determined by its value and derivative at one point, so
the product is the identity.

The reflection $s$ normalizes $\Pi$ and induces the fixed real structure on
$X$; it is the reflection selected above and the one used in the ancillary
exact matrix calculation.
The induced map $C_\Pi(s)\backslash P\to X(\R)$ is injective.  If $x,y\in P$
and $y=gx$, then $g^{-1}sgs$ lies in $\Pi$ and fixes $x$.  Point stabilizers
are finite, while $\Pi$ is torsion-free, so $g^{-1}sgs=1$.  Its image is the
component through the chosen base point.  To see this, lift a path in
$X(\R)$ beginning there to a path $z(t)$ in $\mathbb B^2$.  Since the action
of $\Pi$ is free and properly discontinuous, the relation
$sz(t)=\gamma(t)z(t)$ determines a unique, locally constant
$\gamma(t)\in\Pi$.  Since
$\gamma(0)=1$, the lifted path remains in $P$.  The real locus is connected
because $X(\mathbb R)\cong\#_3\mathbb{RP}^2$ by
Proposition~\ref{prop:input}; hence
\[
                         X(\R)=C_\Pi(s)\backslash P.
\]
Since $P$ is a real hyperbolic plane, it is simply connected, and therefore
$\pi_1(X(\R))=C_\Pi(s)=\Fix\phi$.

By \cite[Theorem~1.6]{CKY}, $H_1(X;\Z)=\Pi/[\Pi,\Pi]\cong\Z^2$, with the
displayed coordinates on $a_1,a_2,a_3$.  The period homomorphism is normalized
in \cite[Section~5.4]{CKY} as $(m,n)\mapsto m-n\omega$, an isomorphism from
this integral lattice onto $H_1(E;\Z)=\Z+\omega\Z$.  Thus the Albanese map
induces an integral isomorphism
\[
                         \alpha_*:H_1(X;\Z)\xrightarrow{\cong}H_1(E;\Z),
\]
not merely a rational one, and equivariance carries the invariant lattice
onto $H_1(E;\Z)^c$.

We compute the integral involution directly.  Exact conjugation by
the reflection matrix in \eqref{eq:reflection-matrix}, followed by rewriting
in the CKY generators, gives the following identities, also recorded in
\cite[Proposition~2.3]{SS}:
\begin{align*}
 \phi(a_1)&=a_1^{-1}a_2^{-1}a_1^{-1}a_3^{-3}a_2^3,\\
 \phi(a_2)&=a_1^{-1}a_2^{-1}a_1a_2^2a_1^{-1}a_2^{-1}
              a_1a_3^{-1}a_1^{-1}a_2a_1,\\
 \phi(a_3)&=a_3^{-1}.
\end{align*}
These identities are verified in the exact cyclotomic model by the ancillary
script.  Abelianizing and using the displayed CKY coordinates gives
\[
 [\phi(a_1)]=(-3,-1),\qquad
 [\phi(a_2)]=(-1,2),\qquad
 [\phi(a_3)]=(1,1).
\]
Thus in the CKY coordinates the involution has matrix
\[
 M=\begin{pmatrix}0&-1\\-1&0\end{pmatrix},
 \qquad \Fix(M)=\Z(1,-1).
\]
The quotient $E/\langle c_E\rangle$ is a M\"obius band, so $E(\mathbb R)$
is one circle.  Let $d\in H_1(E;\Z)$ be its class.  The fixed
circle lifts to the fixed line of the linear involution,
whose lattice of deck translations is
$\Fix(M)=\Z(1,-1)$.  The quotient of this line by its translation lattice
is exactly $E(\mathbb R)$, so inclusion of the real circle realizes the
whole invariant lattice.  Hence $d$ is a primitive generator and
\[
                       H_1(E;\Z)^{c_E}=\Z d.
\]
Thus no factor of two enters when the restricted real map is compared with
the invariant lattice.
Corollary~\ref{cor:real-monodromy} now gives the index formula.  Finally,
$\ell(1,-1)=-2$, which gives the displayed gcd formula.
\end{proof}

The first calculation produces an element whose image has index two in the
invariant lattice.  The proposition is computer-assisted.  We
use the generators $u,v,b$ of the ambient
arithmetic group in \cite{CKY}, and write capital letters for inverses.

\begin{proposition}\label{prop:k-one-two}
The element
\[
 W=u^{-1}v^{-1}uv^2u^{-1}vbu^{-1}v^{-1}uv^{-1}b^{-1}uv^{-1}
\]
belongs to $\Pi$, satisfies $\phi(W)=W$, and has
\[
                         [W]=(2,-2)\in H_1(X;\Z).
\]
Consequently $k$ divides two.
\end{proposition}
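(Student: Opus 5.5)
The proof is a finite exact computation in the explicit Cartwright--Steger model, followed by an application of Proposition~\ref{prop:fixed-subgroup}. I would carry it out in three steps.

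\emph{Step 1: $W\in\Pi$.} Realize $u,v,b$ as the explicit $3\times3$ matrices of \cite{CKY} over $\mathbb Q(\zeta_{12})$ and multiply out the word $W$ in exact cyclotomic arithmetic. Membership in $\Pi$ is then decided by the finite-index description of $\Pi$ in \cite{CKY}. I would use the congruence/character condition, or equivalently reduce $W$ modulo the relevant prime ideal and check that it lands in the image of $\Pi$ in the finite quotient. Alternatively, one can rewrite $W$ as a word in the generators $a_1,a_2,a_3$ using the Reidemeister--Schreier coset table of $\Pi$ in the ambient group. This rewriting is the route I would take, because it also does the work of Step 3.

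\emph{Step 2: $\phi(W)=W$.} Compute $sWs^{-1}=K\overline{W}K^{-1}$ exactly; complex conjugation acts on $\mathbb Q(\zeta_{12})$ by $\zeta_{12}\mapsto\zeta_{12}^{-1}$. Compare the result with $W$ as elements of $PU(2,1)$, that is, up to a scalar cube root of unity. This is a direct matrix identity. Alternatively, apply the displayed formulas for $\phi(a_i)$ to the $a_i$-word obtained in Step 1 and verify the resulting equality in $\Pi$. The matrix check is cleaner, since it avoids the word problem.

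\emph{Step 3: $[W]=(2,-2)$.} Take the $a_i$-word from the Reidemeister--Schreier rewriting, count exponent sums $(e_1,e_2,e_3)$ of $a_1,a_2,a_3$, and form
\[
e_1(1,3)+e_2(-2,1)+e_3(-1,-1).
\]
This should equal $(2,-2)$. There is a consistency check available: $(2,-2)$ lies in $\Fix(M)=\Z(1,-1)$, as it must for a $\phi$-fixed element. With this class in hand, $\ell([W])=-4$, so $\operatorname{ab}(\Fix\phi)$ contains $2\cdot(1,-1)$. By Proposition~\ref{prop:fixed-subgroup}, $k$ is the index of $\operatorname{ab}(\Fix\phi)$ in $\Z(1,-1)$, and this index divides $2$.

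The main obstacle is the rewriting in Step 1. One must fix the coset representatives of $\Pi$ consistently with the normalization of $a_1,a_2,a_3$ in \cite{CKY}, since otherwise the homology coordinates are off by an automorphism. I would first try to reproduce the displayed values $[\phi(a_i)]$ as a calibration, and only then apply the same rewriting to $W$. The exact arithmetic itself is routine and would be carried out by the ancillary script described in Appendix~\ref{app:certificates}.
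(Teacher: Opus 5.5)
Steps~1 and~2 and your closing index argument are the paper's. Membership is read off the $864$-row right-coset table of $\Pi$ in $\overline\Gamma$. Invariance is the exact identity $K\overline{W}K^{-1}=W$ in the projective group, and faithfulness of the CKY realization turns it into $\phi(W)=W$. Then $\ell(2,-2)=-4$ and Proposition~\ref{prop:fixed-subgroup} give $k\mid2$.

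Two side remarks. First, reducing $W$ modulo a prime and landing in the image of $\Pi$ is only a necessary condition for membership, unless $\Pi$ is the full preimage of that image; so the coset table really is the right tool. Second, the projective comparison should allow an arbitrary scalar, not just a cube root of unity, unless determinants have been normalized.

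The genuine gap is in Step~3.
\begin{itemize}
\item Reidemeister--Schreier rewriting along the coset table does not output a word in $a_1,a_2,a_3$. It outputs a word in the Schreier generators of $\Pi$. There is one for each edge of the $864$-sheeted cover of the presentation complex not in a spanning tree, so $864\cdot2+1=1729$ of them.
\item No choice of transversal makes these generators the $a_i$. So the obstacle you name (coset representatives compatible with the normalization of the $a_i$) is not the real one, and exponent-sum counting is not available as written.
\item Your calibration against $[\phi(a_i)]$ already presupposes an $a_i$-rewriting, so it cannot replace one.
\end{itemize}

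What is missing is a map from the Schreier generators to the CKY coordinates on $H_1(\Pi;\Z)$. The paper constructs this map directly:
\begin{itemize}
\item It gives two integer cellular $1$-cocycles on the $864$-sheeted cover, i.e.\ integer weights on its edges.
\item Their sums around every lift of every defining relator of $\overline\Gamma$ vanish, so they define homomorphisms $\Pi\to\Z$.
\item Their values along the lifts of $a_1,a_2,a_3$ are $(1,-2,-1)$ and $(3,1,-1)$. Because the $a_i$ generate $\Pi$, these homomorphisms are exactly the two coordinate functionals.
\item Summing them along the lift of $W$ gives $(2,-2)$.
\end{itemize}
This check is pure integer arithmetic and never needs $W$ as an $a_i$-word.

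Alternatively, you could repair your route with an augmented coset enumeration: modified Todd--Coxeter with subgroup generators $a_1,a_2,a_3$, which records $a_i$-expressions for the Schreier generators. You would then certify the resulting identity $W=w(a_1,a_2,a_3)$ by exact matrix multiplication in the faithful model, and its exponent sums would give $[W]$. But that is an extra algorithm your proposal does not invoke.
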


\begin{proof}
The ambient presentation on $u,v,b$ is \cite[Theorem~1.1]{CKY}.
The explicit words $a_1,a_2,a_3$, the equality
$\Pi=\langle a_1,a_2,a_3\rangle$, the index $864$, and the integral
abelianization coordinates are given in \cite[Theorem~1.6]{CKY}.  Use the
corresponding right-coset table for $\Pi$ in $\overline\Gamma$.
Successively reading the letters of
\[
                         W=UVuvvUvbUVuVBuV
\]
returns the trivial coset, and hence $W\in\Pi$.  In the exact matrix model
over $\mathbb Q(\zeta_{12})$, where $\zeta_{12}=e^{\pi i/6}$, use the
reflection matrix $K$ from \eqref{eq:reflection-matrix}.
Direct multiplication gives
\[
                         K\overline W K^{-1}=W
\]
in the projective unitary group.  The arithmetic realization in \cite{CKY}
is faithful, and this is precisely the reflection used to define $\phi$.
Thus $W\in\Fix\phi$.

For the homology calculation, use the $864$-sheeted presentation complex
associated with the same coset table.  There are two integer cellular
one-cocycles whose values on $(a_1,a_2,a_3)$ are respectively
\[
                         (1,-2,-1),\qquad(3,1,-1).
\]
Their sums around every lift of each defining relator of
$\overline\Gamma$ vanish, so they represent the two coordinate homomorphisms
$H_1(\Pi;\Z)\to\Z$ in the integral basis of
\cite[Theorem~1.6]{CKY}.  Their values on $W$ are
$2$ and $-2$.  Hence $[W]=(2,-2)$.  Proposition~\ref{prop:fixed-subgroup}
now shows that $k$ divides two.  Since $k$ is positive, the result follows.
The search for the integer cocycles uses numerical optimization.  The
supplied certificate is then verified independently using integer arithmetic
only, while the group-invariance calculation uses exact cyclotomic
arithmetic.  The coset table, the cocycles, and standalone verification
scripts accompany the paper.
\end{proof}

The second calculation excludes a primitive element in the image of the
fixed subgroup.  Let
$\mathbb F_4=\mathbb F_2(\omega)$, where $\omega^2+\omega+1=0$.  The
reduction used in \cite[Section~1.5]{CKY} sends $\zeta_{12}$ to $\omega$ and
gives
\[
 \rho_2:\overline\Gamma\longrightarrow PU(3,\mathbb F_4).
\]

\begin{theorem}\label{thm:finite-obstruction}
Every element of $\Fix\phi$ has trivial image in $H_1(X;\mathbb F_2)$.
Consequently
\[
 \operatorname{ab}(\Fix\phi)=2\mathbb Z(1,-1)
 \quad\text{and}\quad k=2.
\]
\end{theorem}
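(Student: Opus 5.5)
The plan is to find a finite quotient of $\Pi$ that detects the mod-two abelianization and on which the involution $\phi$ acts compatibly. Then a fixed element with nonzero mod-two image would force a nonzero fixed vector in a finite, explicitly computable object, and this can be checked exhaustively. The natural candidate is the reduction $\rho_2:\overline\Gamma\to PU(3,\mathbb F_4)$ of \cite[Section~1.5]{CKY}.

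\textbf{Step 1: equivariance of the reduction.} Complex conjugation on $\mathbb Q(\zeta_{12})$ sends $\zeta_{12}\mapsto\zeta_{12}^{-1}$. Under the reduction it therefore becomes $\omega\mapsto\omega^2$, which is the Frobenius of $\mathbb F_4$. Hence $\phi(g)=K\overline gK^{-1}$ reduces to $\bar\phi(h)=\bar K h^{(2)}\bar K^{-1}$, where $h^{(2)}$ denotes entrywise Frobenius and $\bar K$ is the reduction of $K$. This gives $\rho_2\circ\phi=\bar\phi\circ\rho_2$.

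\textbf{Step 2: a mod-two quotient of $\Pi$.} I would look for a finite quotient $\Pi\to F$, normalized by $s$, through which the surjection $\Pi\to H_1(X;\mathbb F_2)\cong\mathbb F_2^2$ factors. I would first try $F=\rho_2(\Pi)$ itself. Concretely, I would compute $\rho_2(a_i)$ and then the abelianization of $\rho_2(\Pi)$, checking that the map $\Pi\to H_1(X;\mathbb F_2)$ factors through $\rho_2(\Pi)^{\mathrm{ab}}\otimes\mathbb F_2$. If $\rho_2(\Pi)$ is too small, I would instead use a finite quotient built from the $864$-sheet coset table. In that case $F$ would be the image of $\Pi$ in a semidirect product of $\overline\Gamma$'s permutation action with $\mathbb F_2^2$, defined by the two integer cocycles of Proposition~\ref{prop:k-one-two} reduced mod $2$. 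Either way, $\phi$ descends to an automorphism $\bar\phi$ of $F$, and $\operatorname{ab}\bmod 2$ descends to a homomorphism $\lambda:F\to\mathbb F_2^2$.

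\textbf{Step 3: exhaustive check.} Every $g\in\Fix\phi$ maps into $\Fix\bar\phi\subset F$, so it suffices to show $\lambda(\Fix\bar\phi)=0$. This is a finite enumeration: list the elements of $F$ fixed by $\bar\phi$ (or generators of that subgroup) and evaluate $\lambda$ on them. The main obstacle is right here. $\Fix\bar\phi$ may well be larger than the image of $\Fix\phi$. If $\bar\phi$ is inner-by-Frobenius, twisted centralizers in $PU(3,\mathbb F_4)$ tend to contain elements with nonzero $\lambda$, for instance reductions of fixed elements of other $s$-conjugates. If the direct check fails, I would refine the test. Since $\Fix\phi=C_\Pi(s)$ is the stabilizer of $P$, its elements fix the base point's class in the $s$-twisted action on cosets. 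I would therefore restrict to elements $h$ with $\bar\phi(h)=h$ that also come from the stabilizer of the real base point. Equivalently, I would use the finite $s$-equivariant cover determined by $\ker\lambda$, which is a $(\mathbb Z/2)^2$-cover, and check that the real locus lifts to it. That amounts to checking that $s$ lifts to an involution on the cover whose fixed set maps onto $X(\mathbb R)$ with full preimage, which again is a finite computation.

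\textbf{Step 4: conclusion.} Once $\lambda(\Fix\phi)=0$, $\operatorname{ab}(\Fix\phi)\subset 2H_1(X;\mathbb Z)\cap\mathbb Z(1,-1)=2\mathbb Z(1,-1)$. Proposition~\ref{prop:k-one-two} supplies $[W]=(2,-2)$, so equality holds. Proposition~\ref{prop:fixed-subgroup} then gives $k=[\mathbb Z(1,-1):2\mathbb Z(1,-1)]=2$.
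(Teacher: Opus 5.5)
Your plan is essentially the paper's proof. The paper reduces via $\rho_2$, with complex conjugation becoming Frobenius, and enumerates the labeled image $Q=q(\Pi)\subset PU(3,\mathbb F_4)\times H_1(X;\mathbb F_2)$, which has order $72=|\rho_2(\Pi)|$, so your first choice $F=\rho_2(\Pi)$ already carries the mod-two labels; it finds exactly six elements fixed by the induced involution, all with label $(0,0)$. Your direct check in Step 3 therefore succeeds, so the fallback is not needed, and your Step 4 is the paper's conclusion via $W$ and the index formula of Proposition~\ref{prop:fixed-subgroup}.
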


\begin{proof}
Combine the restriction of $\rho_2$ with mod-two abelianization:
\[
 q:\Pi\longrightarrow
 PU(3,\mathbb F_4)\times H_1(X;\mathbb F_2),
 \qquad q(g)=(\rho_2(g),[g]_2),
\]
and let $Q=q(\Pi)$.  Complex conjugation on $\mathbb Q(\zeta_{12})$
reduces to the Frobenius automorphism $x\mapsto x^2$ of $\mathbb F_4$.
The reduction of the matrix $K$ above therefore induces on $Q$ the
involution
\[
 (A,h)\longmapsto
 \left(KA^{(2)}K^{-1},Mh\right),
 \qquad
 M=\begin{pmatrix}0&1\\1&0\end{pmatrix}.
\]
Here $A^{(2)}$ is obtained by squaring every entry of $A$.  The matrix $M$
is the reduction modulo two of the action on $H_1(X;\mathbb Z)$.

The calculation is finite and exact.  Enter the reductions of the CKY
matrices for $a_1,a_2,a_3$, together with the homology labels
\[
 [a_1]_2=(1,1),\qquad [a_2]_2=(0,1),\qquad [a_3]_2=(1,1).
\]
Matrices are normalized by their first nonzero entry, which realizes the
projective quotient inside $PGL(3,\mathbb F_4)$; the enumerated CKY image is
the cited projective unitary subgroup.  The script also verifies, on all
three generators, the reduced word formulas for $\phi$.
Enumeration of the subgroup generated by these three labeled matrices gives
\[
 |Q|=72.
\]
The enumeration verifies that this rule is well defined on $Q$, preserves
$Q$, and squares to the identity.  It has six fixed elements.  Direct
enumeration of these six elements gives
\[
 \{h\in H_1(X;\mathbb F_2):(A,h)\in\Fix(Q)\}=\{(0,0)\}.
\]
If $g\in\Fix\phi$, then $q(g)$ is fixed by the induced involution, and hence
$[g]_2=0$.  Since $\operatorname{ab}(\Fix\phi)$ is contained in
$\mathbb Z(1,-1)$ by Proposition~\ref{prop:fixed-subgroup}, this proves
\[
 \operatorname{ab}(\Fix\phi)\subseteq2\mathbb Z(1,-1).
\]
Proposition~\ref{prop:k-one-two} supplies an element with class
$2(1,-1)$, so equality holds.  The index formula in
Proposition~\ref{prop:fixed-subgroup} now gives $k=2$.
\end{proof}

Let $B\subset Y$ be the branch surface and $j:B\hookrightarrow Y$ its
inclusion.  The quotient map identifies $X(\R)$ with $B$.

\begin{theorem}\label{thm:branch-image}
Under the isomorphism
\[
\overline\alpha_*:\pi_1(Y)\longrightarrow\pi_1(\mathbb M)\cong\Z
\]
recalled in Section~2, one has
\[
                         j_*\pi_1(B)=2k\Z.
\]
In particular, Theorem~\ref{thm:finite-obstruction} gives
\[
                         j_*\pi_1(B)=4\Z.
\]
\end{theorem}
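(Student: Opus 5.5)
We compute $j_*\pi_1(B)$ by following $B=q_X(R)$ through the descended Albanese map. The commutative square
\[
\overline\alpha\circ j=q_E\circ\alpha_{\R}\colon R\to\partial\mathcal M
\]
holds after identifying $R$ with $B$. Since $\overline\alpha_*$ is an isomorphism by Proposition~\ref{prop:input}, it suffices to compute the image of $\pi_1(R)$ in $\pi_1(\mathcal M)\cong\Z$ under $(q_E\circ\alpha_{\R})_*$. Because $\pi_1(\mathcal M)$ is abelian, this image depends only on $H_1(R;\Z)$, so we may work with homology classes throughout.

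The key steps are as follows.

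\emph{First}, identify the image of $(\alpha_{\R})_*\colon H_1(R;\Z)\to H_1(E(\R);\Z)\cong\Z d$. By Theorem~\ref{thm:reeb} and Corollary~\ref{cor:real-monodromy}, the class $\alpha_{\R}^*\eta$ has divisibility $k$. Equivalently, the image of $H_1(R)\to H_1(E(\R))$ is exactly $k\Z d$. Concretely, the loop built in the proof of Theorem~\ref{thm:reeb} from transverse arcs maps with degree $k$, and every loop maps with degree divisible by $k$, because the map factors through the degree-$k$ Reeb covering. One can cross-check this with Proposition~\ref{prop:fixed-subgroup}, which shows that $\operatorname{ab}(\Fix\phi)=k\Z(1,-1)$.

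\emph{Second}, determine the map $\pi_1(E(\R))=\pi_1(\partial\mathcal M)\to\pi_1(\mathcal M)$. The real circle $E(\R)$ maps homeomorphically onto $\partial\mathcal M$, since $c_E$ fixes it pointwise. The boundary circle of a M\"obius band is homotopic to twice the core. Hence $d\mapsto\pm2$ in $\pi_1(\mathcal M)\cong\Z$, with the generator fixed by the core $\delta$ as in the definition of an Albanese generator. To justify this, we use the explicit quotient model: $E=\C/(\Z+\omega\Z)$ modulo the linear involution with fixed lattice $\Z(1,-1)$. Choosing a fundamental domain shows that the quotient is a M\"obius band whose core lifts to a half-period translation. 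Since $d$ is primitive by Proposition~\ref{prop:fixed-subgroup}, no additional factor appears.

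\emph{Combining} the two steps gives $j_*\pi_1(B)=2k\Z$. Theorem~\ref{thm:finite-obstruction} then yields $4\Z$.

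The main obstacle is the second step, in particular the bookkeeping of the factor $2$. One must check that the generator of $\pi_1(Y)$ used by $\overline\alpha_*$ corresponds to the core rather than to the boundary. One must also check that the primitive real class $d$ is not itself already a multiple of the core class in some other normalization. I would settle this directly by viewing $E\to\mathcal M$ as the orientation double cover. The composite $E(\R)\to\mathcal M$ is then the boundary inclusion. The core $\delta$ lifts to a nonclosed path in $E$ whose double closes up to a loop homologous to $d$, which gives degree $2$.
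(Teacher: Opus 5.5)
Your proposal is correct and follows the paper's proof: both factor $\overline\alpha\circ j$ through $q_E\circ\alpha_{\R}$, use Corollary~\ref{cor:real-monodromy} to get image $k\Z$ in $\pi_1(\partial\mathcal M)$, and then use that the boundary of the M\"obius band represents twice its core. Your orientation-double-cover check of the factor two makes explicit a fact the paper simply quotes. The appeal to primitivity of $d$ is harmless but not needed, since all that matters is that $E(\R)\to\partial\mathcal M$ is a homeomorphism.
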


\begin{proof}
The restriction of $\overline\alpha$ to $B$ takes values in
$\partial\mathbb M$ and agrees with $\alpha_{\R}$ under the identification
$X(\R)\cong B$.  Corollary~\ref{cor:real-monodromy} says that its image on
fundamental groups is $k\Z\subseteq\pi_1(\partial\mathbb M)$.  The boundary
circle of a M\"obius band represents twice its core, so the inclusion
$\pi_1(\partial\mathbb M)\to\pi_1(\mathbb M)$ is multiplication by two.
The commutative quotient diagram and the isomorphism $\overline\alpha_*$
now give the result.
\end{proof}

\section{Smooth real fibers}

Let $b\in E(\R)$ be the real critical value and let
$t\in E(\R)\setminus\{b\}$.  The fiber $F_t=\alpha^{-1}(t)$ is a smooth real
curve of genus $19$.  Its real locus is nonempty and has the form
\[
                 F_t(\R)=C_1\sqcup\cdots\sqcup C_k,
                 \qquad 1\leq k\leq20.
\]
The upper bound is Harnack's inequality.  Since
$E(\R)\setminus\{b\}$ is connected and the restriction is a proper
submersion there, the integer $k$ is independent of $t$.

Put $Q_t=F_t/\langle c\rangle$.  It has $k$ boundary components and
\[
 \chi(Q_t)=\frac12\chi(F_t)=1-19=-18.
\]
It is orientable precisely when the real curve $F_t$ is dividing.  If it is
orientable of genus $h$, then
\[
 2-2h-k=1-g(F_t),
\]
and hence a dividing real curve of genus $g$ has
$k\equiv g+1\pmod2$.

We identify the relevant point of the Albanese torus without using an
equation for the Albanese map.  Let $\Lambda=\Z+\omega\Z$, where
$\omega^2+\omega+1=0$, and identify $E$ with $\mathbb C/\Lambda$ using the
normalization of \cite[Section~5.4]{CKY}.
This differs by a change of coordinate on $E$ from a normalization in which
the real structure is written as ordinary complex conjugation.

\begin{lemma}\label{lem:common-fixed-point}
The real structure on $E$ is induced by
\[
                         z\longmapsto\omega\overline z.
\]
The point $O=0$ is the unique point of $E$ fixed by both the real structure
and the order-three automorphism induced by $\sigma$.
\end{lemma}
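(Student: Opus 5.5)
We have to prove Lemma~\ref{lem:common-fixed-point}. We use two inputs: the integral involution $M$ on $H_1(X;\Z)$ from Proposition~\ref{prop:fixed-subgroup}, and the CKY period normalization $(m,n)\mapsto m-n\omega$.

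\emph{Step 1: identify the antiholomorphic part.} Since $\alpha_*$ is an integral isomorphism intertwining $c_*$ and $(c_E)_*$, the linear part of $c_E$ on $\Lambda=\Z+\omega\Z$ is the transport of $M$. We have $M(1,0)=(0,-1)$ and $M(0,1)=(-1,0)$. Under the period map, $1=(1,0)$ is sent to $(0,-1)\mapsto\omega$, and $-\omega=(0,1)$ is sent to $(-1,0)\mapsto-1$, so $\omega\mapsto1$. An antiholomorphic $\R$-linear map has the form $z\mapsto\lambda\overline z$. The condition $1\mapsto\omega$ forces $\lambda=\omega$. Then $\overline\omega=\omega^2$ gives $\omega\mapsto\omega\cdot\omega^2=1$, which is consistent.

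\emph{Step 2: normalize the translation part.} A priori $c_E(z)=\omega\overline z+\tau$. I would choose the origin of $E$ as the Albanese image of a real point; the real base point exists because $X(\R)\neq\emptyset$. With this origin, $\alpha$ is normalized so that $\alpha$ of that point is $0$, and then $c_E(0)=0$, so $\tau=0$. If instead the origin is fixed as the image of a $\sigma$-fixed point, I would argue as follows. The set of $c_E$-fixed points is nonempty, and the point $O$ is $\sigma$-fixed and also real: the paper states that $\sigma$ fixes $O$, and $O$ is the image of the real points $[U_7]$ and so on. Choosing $O$ as the origin kills $\tau$. This choice of origin is the main subtlety; I would rely on the stated fact that $O$ is the image of the invariant real fiber.

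\emph{Step 3: uniqueness.} Since $\sigma$ is a holomorphic automorphism of order three acting on $H_1(E)$ by an order-three lattice automorphism fixing $O=0$, we have $\sigma_E(z)=\omega^{\pm1}z$. Its fixed points satisfy $(1-\omega)z\in\Lambda$, so $z\in(1-\omega)^{-1}\Lambda$, which gives three points: $0$ and $\pm(1-\omega)^{-1}$ modulo $\Lambda$. Concretely these are $0$, $(2+\omega)/3$ and $(1+2\omega)/3$ up to sign. We then check which of them are fixed by $z\mapsto\omega\overline z$. Take $p=(1-\omega)^{-1}=(2+\omega)/3$. Then
\[
\omega\overline p=\omega(2+\omega^2)/3=(2\omega+1)/3 .
\]
The difference from $p$ is $(\omega-1)/3$, which is not in $\Lambda$. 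Similarly $-p$ maps to $-(1+2\omega)/3$, which is not congruent to $-p$. Hence only $0$ is a common fixed point. These are short exact verifications in $\Z[\omega]$.
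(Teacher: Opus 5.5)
\textbf{Verdict: there is a gap in Step 2.} Your Steps 1 and 3 are the paper's argument: the period check $1\mapsto\omega$, $-\omega\mapsto-1$ pins the linear part to $z\mapsto\omega\overline z$, and $\omega\overline{p}=(1+2\omega)/3\equiv-(2+\omega)/3$ shows that the real structure swaps the two nonzero $\sigma_E$-fixed points. The gap is in Step 2. There you need the translation part $\tau$ to vanish at the origin of the CKY normalization, which is the origin where $\sigma_E$ is multiplication by $\omega^{\pm1}$.

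Neither of your two options achieves this.
\begin{itemize}
\item In option (a) you put the origin at the image of an arbitrary real point. This kills $\tau$, but that point is in general not $\sigma_E$-fixed, so the formula $\sigma_E(z)=\omega^{\pm1}z$ used in Step 3 is lost. Also, the lemma is stated for the fixed CKY normalization, so you cannot choose the origin freely.
\item Option (b) rests on the claim that $O$ is the image of the real points $[U_7],[U_8],[U_9]$. In the paper this is a \emph{consequence} of the lemma (proof of Theorem~\ref{thm:nondividing}). Independently, you only know that $\alpha([U_7])$ is some point among $p_0,p_{\pm1}$ that is fixed by $c_E$. Deciding that it is $p_0$ is exactly the uniqueness statement you are proving, so the argument is circular.
\end{itemize}

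The missing ingredient is a real point of $X$ known independently to lie over $p_0=0$. The paper gets it from the ball. The reflection $s(z)=K\overline z$ fixes the origin of $\mathbb B^2$, so $O_1=\Pi(0)$ is a real point of $X$. By \cite[Proposition~5.5(b)]{CKY}, $O_1$ lies over $p_0$, and equivariance then gives $c_E(0)=0$.

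You can also avoid presupposing anything about $O$. The point $\alpha([U_7])$ is both real and $\sigma_E$-fixed. Alternatively, $c\sigma c=\sigma^{-1}$ shows that $c_E$ permutes the three $\sigma_E$-fixed points, so it fixes one of them. Recentering at such a point keeps $\sigma_E(z)=\omega^{\pm1}z$ and makes $\tau=0$. Your Steps 1 and 3 then show that this common fixed point is unique. That proves the lemma with $O$ defined as the unique common fixed point, which is enough for the later uses. Identifying it with the CKY point $p_0$ still requires the ball-origin argument.
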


\begin{proof}
The point $O_1=\Pi(0)$ of \cite[Proposition~5.5(b)]{CKY} lies over
$p_0=0$.  It is real because the reflection in
\eqref{eq:reflection-matrix} fixes the ball origin.  Equivariance therefore
shows that the real structure on $E$ fixes $p_0$.

In the integral coordinates used above, the period map is
$(m,n)\mapsto m-n\omega$, and the real involution is
$(m,n)\mapsto(-n,-m)$.  The anti-linear map
$z\mapsto\omega\overline z$ has precisely this action on the two periods:
it sends $1$ to $\omega$ and $-\omega$ to $-1$.  Since both maps fix the
origin, this identifies the induced real structure.

By the proof of \cite[Lemma~5.4]{CKY}, the action of $\sigma$ on $E$ is multiplication
by $\omega^t$, with $t=1$ or $2$, and its fixed points are
\[
 p_j=\frac{j(2+\omega)}3+\Lambda,\qquad j=0,\pm1.
\]
The real involution interchanges $p_1$ and $p_{-1}$.  Indeed,
\[
 \omega\overline{p_1}
   =\frac{1+2\omega}{3}+\Lambda
   =-\frac{2+\omega}{3}+\Lambda=p_{-1},
\]
because $1+\omega\in\Lambda$.  Thus only $p_0=0$ is fixed by both
actions.
\end{proof}

Let $\sigma$ denote the holomorphic automorphism of order three and put
\[
                         F_1=\alpha^{-1}(O).
\]
The fiber $F_1$ is smooth by \cite{KY,Rito}.  It is
preserved by both $\sigma$ and $c$.  Let
\[
                         G_1=F_1/\langle\sigma\rangle
\]
be the abstract smooth quotient curve.  The relation
$c\sigma c=\sigma^{-1}$ shows that conjugation descends to a real structure
on $G_1$.

\begin{theorem}\label{thm:nondividing}
The cyclic quotient map restricts to a homeomorphism
\[
                         F_1(\R)\longrightarrow G_1(\R).
\]
The curve $G_1$ has genus six.  Consequently
\[
                         1\leq k\leq7,
\]
and the real Albanese fiber $F_1$ is nondividing.
\end{theorem}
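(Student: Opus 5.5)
The plan is to compute the genus of $G_1$ by Riemann--Hurwitz, to identify the real loci by an orbit-counting argument, and then to rule out dividing by a parity argument on $G_1$.

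\emph{Genus of $G_1$.} I would apply Riemann--Hurwitz to the cyclic triple cover $F_1\to G_1$. The fixed points of $\sigma$ on $X$ that lie over the Albanese point $O$ should be exactly the three coordinate points $[U_7],[U_8],[U_9]$ singled out in Section~2. I expect this to be the main obstacle: it has to be extracted from the known fixed-point data of the order-three action in \cite{CKY} or from Rito's equations \cite{Rito}, together with Lemma~\ref{lem:common-fixed-point}, which shows that the other $\sigma$-fixed points of $E$ are $p_{\pm1}\neq O$. Each fixed point of an order-three automorphism of a curve is totally ramified. With $r=3$, Riemann--Hurwitz reads
\[
 2\cdot19-2=3(2h-2)+2r ,
\]
which gives $h=6$.

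\emph{Homeomorphism on real loci.} Write $\pi:F_1\to G_1$ for the quotient map, and recall $c\sigma c=\sigma^{-1}$.
\begin{enumerate}
\item Injectivity: suppose $x$ and $\sigma^i x$ are both real for some $i\not\equiv0 \pmod 3$. Then $\sigma^i x=c\sigma^i x=\sigma^{-i}cx=\sigma^{-i}x$, so $\sigma^{2i}x=x$, and $x$ is a fixed point of $\sigma$. In every case the orbit of a real point contains only that one real point.
\item Surjectivity: a real point of $G_1$ is a $\sigma$-orbit preserved by $c$. An orbit has one or three points. An involution of a set of odd cardinality has a fixed point, so the orbit contains a real point.
\end{enumerate}
Thus $\pi$ restricts to a continuous bijection of compact Hausdorff spaces $F_1(\R)\to G_1(\R)$, hence a homeomorphism. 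Since $F_1$ is smooth, $O\neq b$, and the number of ovals of $F_1$ is the constant $k$ of the previous section. Harnack's inequality for the genus-six real curve $G_1$ then gives $1\le k\le7$.

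\emph{Nondividing.} Suppose $F_1$ were dividing, so $F_1\setminus F_1(\R)$ has two components $H_\pm$, which are exchanged by $c$.
\begin{enumerate}
\item $\sigma$ is holomorphic, preserves $F_1(\R)$, and hence permutes $\{H_+,H_-\}$. Since it has order three, it fixes each half.
\item The fixed points of $\sigma$ on $F_1$ are the real points $[U_i]$, so $\pi^{-1}(G_1(\R))=F_1(\R)$ by the homeomorphism above.
\item Hence $G_1\setminus G_1(\R)=\pi(H_+)\sqcup\pi(H_-)$ is disconnected, and $G_1$ would be dividing.
\end{enumerate}
By the parity relation recorded at the start of this section, a dividing curve of genus $6$ has $k\equiv 7\pmod 2$, so $k$ would be odd. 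But Theorem~\ref{thm:finite-obstruction} gives $k=2$, a contradiction. Therefore $F_1$ is nondividing.
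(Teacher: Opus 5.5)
Your genus computation and your proof that $\pi$ restricts to a homeomorphism $F_1(\R)\to G_1(\R)$ are correct and essentially match the paper. The gap is in the nondividing step. Items (1) and (2) there are false, and they contradict your own injectivity computation. From $c\sigma c=\sigma^{-1}$ one gets $\sigma c\sigma^{-1}=\sigma^2c$. So $\sigma$ carries $F_1(\R)=\Fix(c)$ onto $\Fix(\sigma^2c)$, the fixed locus of a different reflection in $S_3$. By your injectivity argument, $\Fix(\sigma^2c)$ meets $F_1(\R)$ only in the three $\sigma$-fixed points. Hence $\sigma$ does not preserve $F_1(\R)$, and there is no reason for it to permute $\{H_+,H_-\}$.

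Likewise, $\pi^{-1}(G_1(\R))$ is not $F_1(\R)$ but the union $\Fix(c)\cup\Fix(\sigma c)\cup\Fix(\sigma^2c)$. If $x\in\Fix(c)$, then $\sigma x\in\Fix(\sigma^2c)$ and $\sigma^2x\in\Fix(\sigma c)$. The bijection you proved describes only the restriction of $\pi$ to $F_1(\R)$, not the full preimage of $G_1(\R)$. Consequently $\pi(H_+)$ and $\pi(H_-)$ both contain real points of $G_1$ and need not be disjoint, so item (3) does not follow.

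The missing idea is to transfer the dividing property homologically rather than through the two halves. By Klein's criterion, a real curve is dividing exactly when its real locus vanishes in $H_1(\,\cdot\,;\F_2)$. If $F_1$ were dividing, then $[F_1(\R)]=0$ in $H_1(F_1;\F_2)$. Since $\pi|_{F_1(\R)}$ is a homeomorphism onto $G_1(\R)$, it has mod-two degree one on each oval, so $[G_1(\R)]=\pi_*[F_1(\R)]=0$ and $G_1$ is dividing. This is the paper's argument, and it uses only the homeomorphism you already established.

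The paper also reaches the final contradiction without Theorem~\ref{thm:finite-obstruction}. A dividing $F_1$ of genus $19$ forces $k$ even, while a dividing $G_1$ of genus $6$ forces $k$ odd. Your appeal to $k=2$ is admissible given the paper's ordering. However, it makes the nondividing statement depend on the computer-assisted lattice calculation when it need not.
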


\begin{proof}
In the Borisov-Yeung bicanonical model, write $[U_i]$ for the coordinate
point whose only nonzero coordinate is $U_i$.  Direct substitution in the
defining equations of \cite{BY} shows that $[U_7],[U_8],[U_9]$ lie on $X$;
this check is also included in the ancillary archive.  The diagonal action
of $\sigma$ fixes them.  They are real for the coordinatewise conjugation
recalled in the Introduction.  Since each point is fixed by $\sigma$, it is
also fixed by the selected reflection $c$ when coordinate conjugation is
$\sigma^j c$.  Thus the possible composition by the order-three action does
not change these real points.  Their images under $\alpha$ are
therefore fixed by both induced actions on $E$, so
Lemma~\ref{lem:common-fixed-point} places all three points on $F_1$.  By
\cite[Proposition~5.5(d)]{CKY}, the restriction of $\sigma$ to
$F_1$ has exactly three fixed points, so these exhaust its ramification.
Since the group has prime order, there are no other ramification points of
the degree-three quotient.  In particular, all three branch values on
$G_1$ are real.  Riemann-Hurwitz gives
\[
 2\cdot19-2=3(2g(G_1)-2)+3(3-1),
\]
and hence $g(G_1)=6$.

Let $y\in G_1(\R)$ be away from the branch locus.  Conjugation acts as an
involution on the three-point $\sigma$-orbit over $y$, so it fixes either one
or three points.  It cannot fix all three.  Indeed, if all three points in the
orbit of a nonfixed point $x$ were real, then $c(x)=x$ and
$c(\sigma x)=\sigma x$.  The relation $c\sigma=\sigma^{-1}c$ would give
$\sigma x=\sigma^{-1}x$, and hence $\sigma x=x$, a contradiction.  Thus there is exactly one
real point over $y$.  At a branch value the unique point above it is real.
At a real branch point there are compatible local coordinates in which the
map is $z\mapsto z^3$ and conjugation is $z\mapsto\bar z$; on the real loci
this is $t\mapsto t^3$.  The map on real loci is therefore a continuous
bijection between compact Hausdorff one-manifolds, and hence a homeomorphism.
Harnack's inequality on the genus-six curve gives $k\leq7$.

Suppose that $F_1$ were dividing.  A union of disjoint circles in an oriented
surface separates it exactly when its mod-two homology class vanishes.  For
the real locus of a connected real curve, conjugation interchanges the two
complementary sides, which is the classical dividing criterion.  Thus
$[F_1(\R)]=0$ in $H_1(F_1;\F_2)$.  The continuous proper map
$F_1\to G_1$ has a homological pushforward with $\F_2$-coefficients.  On each
real oval its restriction has mod-two degree one: away from ramification it
is a homeomorphism, and at a real ramification point its local form is
$t\mapsto t^3$, again of local degree one modulo two.  Consequently
\[
  0=q_*[F_1(\R)]=[G_1(\R)]\in H_1(G_1;\F_2).
\]
The same separation criterion shows that $G_1$ is dividing.  For a dividing real curve of genus $g$ with $k$ ovals, its
orientable quotient has Euler characteristic $1-g$; hence
$2h+k=g+1$ and $k\equiv g+1\pmod2$.  Applied to $F_1$ this makes $k$ even,
whereas applied to $G_1$ it makes the same integer $k$ odd, a contradiction.
\end{proof}

Let $s=\sigma|_{F_1}$ and $r=c|_{F_1}$.  The relations
$s^3=r^2=1$ and $rsr=s^{-1}$ define an action of
$S_3=\langle s,r\rangle$ on $F_1$.

\begin{theorem}\label{thm:S3-homology}
As a rational $S_3$-module,
\[
 H_1(F_1;\mathbb Q)
 \cong \mathbf 1^{\oplus6}\oplus
       \operatorname{sgn}^{\oplus6}\oplus V^{\oplus13},
\]
where $V$ is the two-dimensional standard representation.  In particular,
\[
             \dim H_1(F_1;\mathbb Q)^{\langle s\rangle}=12.
\]
\end{theorem}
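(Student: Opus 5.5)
Since the rational representations of $S_3$ are determined by their characters, I will compute the character $\chi$ of $H_1(F_1;\mathbb Q)$ at the three conjugacy classes: the identity, the rotations $s,s^2$, and the reflections $r,rs,rs^2$. Write $H_1(F_1;\mathbb Q)\cong\mathbf 1^{a}\oplus\operatorname{sgn}^{b}\oplus V^{m}$. We have $\chi(1)=2g(F_1)=38$. For the rotation I will use the Lefschetz fixed-point formula. By \cite[Proposition~5.5(d)]{CKY}, as used in the proof of Theorem~\ref{thm:nondividing}, $s$ has exactly three fixed points. A holomorphic automorphism has isolated fixed points of index $+1$, so $2-\chi(s)=3$, giving $\chi(s)=\chi(s^2)=-1$. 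As a cross-check, $\dim H_1^{\langle s\rangle}=\frac13(38-1-1)=12=2g(G_1)$, in agreement with $g(G_1)=6$ from Theorem~\ref{thm:nondividing}.

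For the reflection $r=c|_{F_1}$ I will again apply Lefschetz, now to an orientation-reversing involution of a surface. Its fixed set is $F_1(\mathbb R)$, a disjoint union of $k$ circles, so its Lefschetz number is $\chi(F_1(\mathbb R))=0$. The action on $H_0$ is trivial and the action on $H_2$ is $-1$, so $L(r)=1-\operatorname{tr}(r|H_1)-1=-\operatorname{tr}(r|H_1)$. Hence $\chi(r)=0$. The same holds for $rs$ and $rs^2$, which are conjugate to $r$; note that this step only needs the fixed set to be a union of circles, not the value of $k$. Equivalently, an antiholomorphic involution on $H_1$ always has trace zero, because it exchanges $H^{1,0}$ and $H^{0,1}$ and so has eigenvalues $\pm1$ in equal numbers. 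I will mention this alternative since it avoids any dependence on the real locus.

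Solving the three character equations gives the multiplicities:
\[
a+b+2m=38,\qquad a+b-m=-1,\qquad a-b=0.
\]
From these, $m=13$ and $a=b=6$. This yields the stated decomposition, and the displayed invariant dimension is $a+b=12$.

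The main point needing care is the rotation trace: I must confirm that the three $\sigma$-fixed points on $F_1$ are exactly the fixed points of $s$ and that each has Lefschetz index $+1$. Both follow from holomorphicity of $s$ together with the cited count. The reflection trace is essentially formal.
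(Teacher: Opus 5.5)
Your proposal is correct and follows the paper's proof essentially verbatim: the Lefschetz formula gives trace $-1$ for $s$ (three fixed points, each of index one) and trace $0$ for each antiholomorphic reflection (fixed set a union of circles), and the same three character equations yield $a=b=6$ and $m=13$. Your cross-check $\dim H_1(F_1;\mathbb Q)^{\langle s\rangle}=2g(G_1)=12$ and your Hodge-theoretic argument for the vanishing reflection trace are both sound, but they are optional additions that the argument does not need.
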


\begin{proof}
The automorphism $s$ has exactly three fixed points on $F_1$ by
\cite[Proposition~5.5(d)]{CKY}.  Each has local Lefschetz index one, since
$s$ is holomorphic and acts nontrivially on the tangent line.  Hence
\[
 3=L(s)=1-\operatorname{tr}\bigl(s_*|H_1(F_1;\mathbb Q)\bigr)+1,
\]
so the trace of $s_*$ is $-1$.

Each of the three reflections in $S_3$ is antiholomorphic.  Its fixed set is
empty or a disjoint union of circles, and therefore has Euler characteristic
zero.  Since an antiholomorphic involution of a compact Riemann surface has
degree $-1$, the Lefschetz formula shows that its trace on
$H_1(F_1;\mathbb Q)$ is zero.

Let $a,b,d$ be the multiplicities of the trivial, sign, and standard
representations.  The character values at the identity, a three-cycle, and
a reflection give
\[
 a+b+2d=38,\qquad a+b-d=-1,\qquad a-b=0.
\]
Thus $a=b=6$ and $d=13$.  A three-cycle has no fixed vector in $V$ and acts
trivially on both one-dimensional representations, which gives the last
assertion.
\end{proof}

The theorem determines the rational representation but not the integral
$S_3$-lattice or the isotopy classes and intersections of the vanishing
cycles.  The resulting constraint applies to monodromy and transport maps
that commute with the indicated $S_3$-action; the order-three symmetry acts nontrivially
on the Albanese base.

Moreover, $G_1$ is nondividing.  Indeed, a dividing real curve of genus six
has an odd number of ovals, whereas $G_1(\mathbb R)$ has two components.
Since the smooth real fibers form a proper equivariant
submersion over $E(\R)\setminus\{b\}$.  This is one connected interval
because $E/\langle c_E\rangle$ is a M\"obius band.  Average a
horizontal distribution under $c$ to obtain an equivariant Ehresmann
trivialization.  Thus the pairs $(F_t,F_t(\R))$ are diffeomorphic, and every
smooth real Albanese fiber is nondividing and has the same value $k=2$.
Consequently $Q_t$ is nonorientable with two boundary components.  The
classification of compact surfaces gives
\[
 Q_t\cong N_{18}\setminus(D^2\sqcup D^2),
\]
where $N_{18}$ is the closed nonorientable surface of genus $18$.  At the
real critical value the fiber has a transverse real node \cite{KY,Rito}.
The fiber is irreducible and reduced \cite[Section~1.6]{Keum}, so its
normalization has genus $18$.  The local saddle must be combined with the
global boundary gluing around the remainder of $E(\R)$; the local crossing alone
does not determine the oval configuration.

\section{The Borisov-Yeung weight-two pencil}\label{sec:weight-two}

Let $F_1$ be the invariant smooth Albanese fiber used above and let
$H=2K_X$ be the bicanonical class.  The results of \cite{CKY,BY} give
\[
 H\cdot F_1=72,
\]
and the order-three action has weights
\[
 (0,0,0,0,1,1,1,2,2,2)
\]
on $(U_0,\ldots,U_9)$.  We use the Borisov-Yeung equations for $X$ and
the characteristic-zero equations published as ancillary files to
\cite{Rito}.  In Rito's normalization, the latter equations define the
invariant fiber through the three order-three fixed points
$[U_7],[U_8],[U_9]$; by Lemma~\ref{lem:common-fixed-point} and
\cite[Proposition~5.5(d)]{CKY}, this is the fiber $F_1=\alpha^{-1}(O)$ used
above.  The coordinate order and the diagonal weights are those of
\cite{BY}.  The ancillary archive to the present paper includes
byte-for-byte copies of the two source files, their upstream arXiv
identifiers and versions, and their checksums.

\begin{theorem}\label{thm:weight-two-pencil}
The weight-two bicanonical net
\[
                 \langle U_7,U_8,U_9\rangle|_{F_1}
\]
is base-point-free.  Consequently, every real pencil generated by a pair
with no common zero defines a $\sigma$-invariant morphism
\[
                 \ell_1:F_1\longrightarrow\mathbb P^1
\]
of degree $72$.  It factors through the cyclic quotient as
\[
 F_1\xrightarrow{q}G_1\xrightarrow{\bar\ell_1}\mathbb P^1,
 \qquad \deg q=3,\qquad \deg\bar\ell_1=24.
\]
The ramification divisor of $\bar\ell_1$ has degree $58$, counted with multiplicity.
The particular pair
\[
 L_0=U_7+U_8+U_9,
 \qquad
 L_1=U_7+2U_8+3U_9
\]
has no common zero on $F_1$ and therefore gives an explicit degree-$24$
quotient map.
\end{theorem}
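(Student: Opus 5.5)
The proof combines an exact algebraic computation on $F_1$ with Riemann--Hurwitz; the order is as follows. First I would establish the degree: $U_7,U_8,U_9$ are sections of $H=2K_X$, so their restrictions to $F_1$ are sections of a line bundle of degree $H\cdot F_1=72$. Once base-point-freeness is known, any pencil spanned by two members without common zero defines a morphism $\ell_1$ of degree exactly $72$. The variables $U_7,U_8,U_9$ all have weight two, so under $\sigma$ each is multiplied by the same character. Hence ratios of them are $\sigma$-invariant rational functions, and $\ell_1\circ\sigma=\ell_1$. The map therefore factors through $q:F_1\to G_1$, which has degree three, and $\deg\bar\ell_1=72/3=24$. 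If the pair is real, i.e.\ has rational coefficients like $L_0,L_1$, then $\ell_1$ also commutes with conjugation.

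The key step, and the one I expect to be the main obstacle, is base-point-freeness of the net on $F_1$. I would prove it by an exact Gr\"obner basis computation over $\mathbb Q$. Take the ideal of $F_1$ in $\mathbb P^9$ from Rito's characteristic-zero equations, together with the Borisov--Yeung equations of $X$. Add $U_7,U_8,U_9$ and verify that the resulting ideal is irrelevant, i.e.\ that its saturation contains a power of the maximal ideal. Stronger, and sufficient for the last claim, I would add just $L_0$ and $L_1$ and check irrelevance; a pencil without base points implies the net has none. The difficulty is purely computational: the equations are large. If a direct computation over $\mathbb Q$ is too slow, I would first check the conclusion modulo several good primes as a heuristic. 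I would then certify it over $\mathbb Q$, for instance by exhibiting explicit polynomial combinations expressing $U_i^N$ in the ideal, which can be verified with integer arithmetic and placed in the ancillary archive. One also needs the scheme cut out by Rito's equations to be the reduced curve $F_1$ (smoothness from \cite{KY,Rito}), so that an empty common zero locus on it really means no base points.

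Finally, I would compute the ramification degree by Riemann--Hurwitz for $\bar\ell_1:G_1\to\mathbb P^1$. Here $g(G_1)=6$ by Theorem~\ref{thm:nondividing}, so
\[
 2\cdot6-2=24(0-2)+\deg R,
\]
which gives $\deg R=10+48=58$. This is counted with multiplicity, and characteristic zero guarantees there is no wild ramification. No explicit ramification calculation is needed, since genus and degree determine the total.
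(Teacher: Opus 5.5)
Your proof is correct and follows the paper everywhere except in how the base-locus computation is certified, which is the step that matters most. The following all match the paper: the degree count $H\cdot F_1=72$, the common $\sigma$-character of $U_7,U_8,U_9$ giving invariance and $\deg\bar\ell_1=24$, and Riemann--Hurwitz with $g(G_1)=6$. You also observe that emptiness of $F_1\cap\{L_0=L_1=0\}$ already forces the net to be base-point-free, which is a small economy. The paper instead checks the net on the seven charts $U_i=1$, $0\le i\le 6$, and then the pencil on the single normalization $(U_7,U_8,U_9)=(1,-2,1)$.

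The real difference is that you treat the computation modulo primes as a heuristic and plan to certify over $\mathbb Q$, either by a characteristic-zero Gr\"obner basis or by an explicit Nullstellensatz certificate. The paper shows that the modular computation is already a proof. After clearing denominators, the equations define a closed subscheme $\mathcal Z\subset\mathbb P^9_{\mathbb Z}$. It is proper over $\operatorname{Spec}\mathbb Z$, so its image there is closed. If the generic fiber were nonempty, the image would contain the generic point and hence the closed point $(10007)$. That contradicts the unit ideal obtained on every affine chart modulo $10007$. This direction of specialization needs no good-reduction, flatness, or smoothness hypothesis, so one prime suffices and the other two are only cross-checks.

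Your route is rigorous but may be very expensive, because of coefficient growth over $\mathbb Q$ or a huge certificate. The paper's route turns a cheap finite-field computation into a complete argument.

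Relatedly, you do not need the scheme cut out by Rito's equations to be reduced. Emptiness is set-theoretic, so it suffices that these equations vanish on $F_1$. You also need Rito's invariant fiber to be $\alpha^{-1}(O)$, which the paper takes from Lemma~\ref{lem:common-fixed-point} and \cite{CKY}.
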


\begin{proof}
Base-point-freeness is equivalent to
\[
 F_1\cap\{U_7=U_8=U_9=0\}=\varnothing.
\]
This is checked from the exact Borisov-Yeung equations and the equations of
the invariant fiber.  After clearing the denominators appearing in the
published equations, let $\mathcal Z\subset\mathbb P^9_{\mathbb Z}$ be the
closed subscheme defined by these homogeneous equations together with
$U_7=U_8=U_9=0$.  Its generic fiber is the base scheme in question.  Set
$U_7=U_8=U_9=0$ and reduce the resulting
homogeneous ideal modulo $10007$.  On each of the seven standard affine
charts $U_i=1$, $0\leq i\leq6$, an exact Gr\"obner-basis computation gives
the unit ideal.  These charts cover the locus $U_7=U_8=U_9=0$, so no
saturation assumption is needed and the projective special fiber is empty.
If the generic fiber of the projective scheme $\mathcal Z$ were nonempty,
properness would extend a closed point, after a finite field extension, over
a valuation ring above $10007$; its closed point would give a point of the
special fiber.  This contradiction proves that the characteristic-zero base
scheme is empty.  The same calculation modulo $10009$ and $10037$ gives
independent checks.  No smoothness or good-reduction hypothesis is needed
for this implication from an empty special fiber.

For the displayed pair $L_0,L_1$, their common zero in the weight-two
coordinate plane is $[U_7:U_8:U_9]=[1:-2:1]$.  Substitute
$(U_7,U_8,U_9)=(1,-2,1)$ into the same equations.  The resulting affine
ideal in $U_0,\ldots,U_6$ is the unit ideal modulo each of
$10007,10009,10037$.  The first projective calculation excludes the boundary
$U_7=U_8=U_9=0$.  On its complement, the equations $L_0=L_1=0$ force the
weight-two coordinates to be proportional to $(1,-2,1)$, so the displayed
normalization covers every remaining projective point.  The preceding
specialization argument therefore proves that the characteristic-zero
common-zero scheme is empty.

The bad pairs form a proper real algebraic subset of the space of pairs,
and the displayed real pair lies outside it.  Hence base-point-free real
pencils form a nonempty real open set.  For every such pair, the pullback of
$\mathcal O_{\mathbb P^1}(1)$ is $H|_{F_1}$, so its ratio has degree
\[
                         H\cdot F_1=72.
\]
Both sections have the same character, so their ratio is $\sigma$-invariant
and descends through $q$.  It follows that
\[
                         \deg\bar\ell_1=72/3=24.
\]
Since $g(G_1)=6$, Riemann-Hurwitz gives
\[
 \deg R_{\bar\ell_1}
   =2g(G_1)-2+2\deg\bar\ell_1=10+48=58.
\]
\end{proof}

Writing $p_7=[U_7]$, $p_8=[U_8]$, and $p_9=[U_9]$ for the three points on
$F_1$, the proof of Theorem~\ref{thm:nondividing} and
\cite[Proposition~5.5(d)]{CKY} show that these are exactly the ramification
points of $q$, each with index three.  The composition formula for
ramification divisors gives
\[
 R_{\ell_1}
   =2p_7+2p_8+2p_9+q^*R_{\bar\ell_1}.
\]
Thus, after removing the forced ramification at the three fixed points, the
residual ramification divisor on $F_1$ has degree
\[
                         180-6=174=3\cdot58
\]

Any further marked ramification calculation at $F_1$ can therefore be carried
out on $24$ sheets rather than on the degree-$72$ map upstairs.
The genus-$19$ fiber is recovered from the cyclic triple cover
$F_1\to G_1$, branched at its three fixed points.  This does not replace a
global calculation over the Albanese base, because $\sigma$ acts
nontrivially on $E$.

\section{Circle surgery}

For an embedded oriented loop $\gamma$ in an oriented four-manifold $M$,
let $\nu\gamma$ denote a closed tubular neighborhood of $\gamma$, and choose
an orientation-preserving normal framing
\[
 f:S^1\times D^3\longrightarrow\nu\gamma.
\]
The two homotopy classes of such framings form a torsor for
$\pi_1(SO(3))\cong\mathbb Z/2$.  Put
$C=M\setminus\operatorname{int}\nu\gamma$.  Using the framing on the common
boundary, circle surgery is
\begin{equation}\label{eq:surgery}
 M_{\gamma,f}=C\mathop{\cup}_{\varphi_f}(D^2\times S^2),
\end{equation}
where $\varphi_f:\partial(D^2\times S^2)\to\partial C$ is the resulting
orientation-reversing identification.  Thus the framing is part of the
gluing data.  We write $M_\gamma^\varepsilon$, $\varepsilon\in\mathbb Z/2$,
for the two choices.  The smooth diffeomorphism type may depend on
$\varepsilon$.

\begin{lemma}\label{lem:circle}
Let $M$ be a closed connected oriented four-manifold and let $\gamma$ be an
embedded loop.  Then
\[
 e(M_\gamma)=e(M)+2,\qquad \sigma(M_\gamma)=\sigma(M),
\]
and
\[
 \pi_1(M_\gamma)\cong
 \pi_1(M)/\langle\!\langle[\gamma]\rangle\!\rangle.
\]
\end{lemma}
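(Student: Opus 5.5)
The plan is to compute everything from the decomposition \eqref{eq:surgery}, $M=C\cup(S^1\times D^3)$ and $M_\gamma=C\cup(D^2\times S^2)$, glued along the common boundary $S^1\times S^2$.

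\emph{Euler characteristic.} Since $\chi(S^1\times S^2)=0$, additivity of the Euler characteristic gives
\[
 e(M)=e(C)+e(S^1\times D^3)=e(C),\qquad
 e(M_\gamma)=e(C)+e(D^2\times S^2)=e(C)+2 .
\]
This yields $e(M_\gamma)=e(M)+2$.

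\emph{Signature.} I will use Novikov additivity, which applies because the gluing is along a full closed boundary component. It gives
\[
 \sigma(M)=\sigma(C)+\sigma(S^1\times D^3),\qquad
 \sigma(M_\gamma)=\sigma(C)+\sigma(D^2\times S^2).
\]
Both pieces $S^1\times D^3$ and $D^2\times S^2$ have zero signature. For $S^1\times D^3$ the second homology vanishes. For $D^2\times S^2$ the second homology is generated by $\{0\}\times S^2$, which has self-intersection zero; the form is therefore identically zero. Hence $\sigma(M_\gamma)=\sigma(C)=\sigma(M)$.

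\emph{Fundamental group.} I will apply van Kampen twice, taking a basepoint on the boundary.

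For $M_\gamma$, the boundary $\partial C=S^1\times S^2$ has $\pi_1\cong\mathbb Z$, generated by a pushoff $\gamma'$ of $\gamma$. Since $D^2\times S^2$ is simply connected, gluing it in kills exactly the image of $\gamma'$. This gives
\[
 \pi_1(M_\gamma)\cong\pi_1(C)/\langle\!\langle\gamma'\rangle\!\rangle .
\]
For $M$, the inclusion $\partial C\to S^1\times D^3$ is a $\pi_1$-isomorphism. So gluing in $S^1\times D^3$ adds no new generators and no new relations, and $\pi_1(M)\cong\pi_1(C)$.

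The claim that needs the most care is that $\pi_1(C)\to\pi_1(M)$ is an isomorphism. It holds by general position: a circle has codimension three, so loops and homotopies of loops, which are at most $2$-dimensional, can be pushed off $\gamma$. Under this isomorphism $\gamma'$ maps to $[\gamma]$, up to conjugation coming from the choice of basepoint path. Substituting into the previous display gives
\[
 \pi_1(M_\gamma)\cong\pi_1(M)/\langle\!\langle[\gamma]\rangle\!\rangle .
\]

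None of these three steps uses the framing, so all conclusions hold for both choices of $\varepsilon$.
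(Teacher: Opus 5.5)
Your proof is correct and follows essentially the same route as the paper: additivity for the Euler characteristic, then Novikov additivity for the signature (the paper gives this as the equivalent alternative to its surgery-trace cobordism argument), and finally general position plus van Kampen for $\pi_1$. One small remark: your van Kampen computation with $S^1\times D^3$, where $\pi_1(\partial C)\to\pi_1(S^1\times D^3)$ is an isomorphism, already shows that $\pi_1(C)\to\pi_1(M)$ is an isomorphism, so the general-position argument is a second proof of that step rather than a needed ingredient.
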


\begin{proof}
The Euler-characteristic formula follows from additivity, since
$e(S^1\times D^3)=e(S^1\times S^2)=0$ and $e(D^2\times S^2)=2$.
The two boundary components of the five-dimensional surgery trace are
oriented cobordant and hence have equal signatures; equivalently, this follows
from Novikov additivity for the relative intersection pairings.  General
position shows that $M\setminus\operatorname{int}\nu\gamma\to M$ induces an
isomorphism on fundamental groups: loops can be moved off the
codimension-three submanifold, and transverse nullhomotopies miss it.  Under
this isomorphism the $S^1$ factor of the boundary
$S^1\times S^2$ represents
$[\gamma]$.  Van Kampen's theorem then gives the stated quotient.
\end{proof}

The surgery trace is obtained from $M\times I$ by attaching a
five-dimensional index-two handle $D^2\times D^3$ along
$S^1\times D^3$.  Its belt sphere in the outgoing boundary is
$\{0\}\times S^2$.  The boundary identification inherited from this handle
is precisely the datum required to reverse the surgery.

Let $Q$ be a closed oriented smooth four-manifold satisfying
\[
 H_*(Q;\mathbb Z)\cong H_*(S^1\times S^3;\mathbb Z),
 \qquad \pi_1(Q)\cong\mathbb Z.
\]

\begin{proposition}
\label{prop:abstractblowup}
Let $Q$ be as above, let $\gamma\subset Q$ represent a
generator of $\pi_1(Q)$, and choose either normal framing
$\varepsilon\in\mathbb Z/2$.  Form connected sums in a ball disjoint from
$\gamma$, and put
$\Sigma_Q^\varepsilon=Q_\gamma^\varepsilon$.  Then
\[
 \Sigma_Q^\varepsilon\simeq S^4,
 \qquad
 (Q\#\CP^2)_\gamma^\varepsilon
       \cong \Sigma_Q^\varepsilon\#\CP^2.
\]
In particular, $(Q\#\CP^2)_\gamma^\varepsilon$ is
orientation-preservingly homeomorphic to $\CP^2$.  If
$Q\cong S^1\times S^3$, then $\Sigma_Q^\varepsilon\cong S^4$ for both
framings, and the surgery output is diffeomorphic to $\CP^2$.
\end{proposition}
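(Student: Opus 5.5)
Three steps, in this order: identify the surgery on the connected sum, show that $\Sigma_Q^\varepsilon$ is a homotopy $4$-sphere, and then invoke Freedman.

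\emph{The connected sum.} The ball used for $Q\#\CP^2$ lies away from $\nu\gamma$. Hence the complement of $\nu\gamma$ in $Q\#\CP^2$ is $(Q\setminus\operatorname{int}\nu\gamma)\#\CP^2$. The gluing map $\varphi_f$ in \eqref{eq:surgery} sees only $\partial\nu\gamma$ and the chosen framing. So the surgery commutes with the connected sum, giving a diffeomorphism
\[
(Q\#\CP^2)_\gamma^\varepsilon\cong Q_\gamma^\varepsilon\#\CP^2=\Sigma_Q^\varepsilon\#\CP^2,
\]
for each $\varepsilon$ separately.

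\emph{$\Sigma=\Sigma_Q^\varepsilon$ is a homotopy sphere.} By Lemma~\ref{lem:circle}, $\pi_1(\Sigma)=\pi_1(Q)/\langle\!\langle[\gamma]\rangle\!\rangle=1$, since $[\gamma]$ generates. The same lemma gives $e(\Sigma)=e(Q)+2=2$ and $\sigma(\Sigma)=\sigma(Q)$. Because $H_2(Q)=0$, we have $\sigma(Q)=0$. A simply connected closed $4$-manifold has $H_1=H_3=0$, so $e(\Sigma)=2$ forces $H_2(\Sigma)=0$. A simply connected closed $4$-manifold with the homology of $S^4$ is homotopy equivalent to $S^4$, by Hurewicz and Whitehead applied to a degree-one map to $S^4$. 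This gives $\Sigma_Q^\varepsilon\simeq S^4$.

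\emph{Homeomorphism to $\CP^2$.} By Freedman's theorem, $\Sigma$ is homeomorphic to $S^4$. Therefore $\Sigma\#\CP^2$ is orientation-preservingly homeomorphic to $\CP^2$. Alternatively, $\Sigma\#\CP^2$ is simply connected with intersection form $\langle1\rangle$ and is smooth, so Kirby--Siebenmann vanishes and Freedman's classification gives the same conclusion.

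\emph{The case $Q\cong S^1\times S^3$.} Here $\gamma$ is homotopic, and since we are in dimension four also isotopic, to $S^1\times\{pt\}$; loops in a $4$-manifold are isotopic when homotopic. Surgery on $S^1\times pt$ removes $S^1\times D^3$ and glues in $D^2\times S^2$, producing $\partial(D^2\times D^3)=S^4$ for the standard framing. For the other framing, I would use that the nontrivial element of $\pi_1(SO(3))$ extends over $S^3$: the diffeomorphism $(\theta,x)\mapsto(\theta,\rho_\theta x)$ of $S^1\times S^3$ realizes the framing change, where $\rho_\theta$ is a nontrivial loop of rotations of $S^3$ fixing the point. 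Hence both framings give $S^4$, and the output is $S^4\#\CP^2\cong\CP^2$.

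The main obstacle is making the framing-independence precise: one must check that the Gluck-type twist on $S^1\times S^3$ carries one normal framing of $S^1\times pt$ to the other. The homotopy-theoretic steps are routine.
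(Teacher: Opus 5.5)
Your proposal is correct and takes essentially the paper's route. Surgery commutes with the connected sum. Lemma~\ref{lem:circle} together with Hurewicz--Whitehead makes $\Sigma_Q^\varepsilon$ a homotopy $4$-sphere, and Freedman gives the homeomorphism to $\CP^2$. For $Q\cong S^1\times S^3$, your global diffeomorphism $(\theta,x)\mapsto(\theta,\rho_\theta x)$ restricts on the exterior $S^1\times D^3$ to exactly the paper's extension of the framing twist $\tau$.

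One step is left implicit: concluding $H_2(\Sigma)=0$ from $e(\Sigma)=2$ requires $H_2(\Sigma)$ to be torsion-free. This follows from $H_1=0$ by Poincar\'e duality and universal coefficients, and the paper states it explicitly.
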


\begin{proof}
Lemma~\ref{lem:circle} gives
\[
 \pi_1(\Sigma_Q^\varepsilon)=1,
 \qquad e(\Sigma_Q^\varepsilon)=2,
 \qquad \sigma(\Sigma_Q^\varepsilon)=0.
\]
Since $\pi_1=1$, one has $H_1=0$, and Poincar\'e duality gives $H_3=0$.
The universal coefficient theorem and Poincar\'e duality show that $H_2$ is
torsion free.  The equality $e=2$ then gives $H_2=0$.  Thus
$\Sigma_Q^\varepsilon$ is an integral homology four-sphere.  The Hurewicz
and Whitehead theorems \cite[Theorems~4.32 and~4.5]{Hatcher} make it a homotopy
four-sphere.  Since the connected-sum ball is disjoint from $\nu\gamma$, the
surgery is performed entirely in the $Q$ summand.  Thus
$(Q\#\CP^2)_\gamma^\varepsilon\cong
Q_\gamma^\varepsilon\#\CP^2$, proving the displayed diffeomorphism.  The output is simply connected with intersection
form $\langle1\rangle$ and vanishing Kirby--Siebenmann invariant, so
Freedman's theorem gives the homeomorphism with $\CP^2$.  When
$Q\cong S^1\times S^3$, Proposition~\ref{prop:generatorisotopy} shows that
$\gamma$, after reversing its orientation if necessary, is ambiently isotopic
to the product generator.  The exterior of that generator is
$S^1\times D^3$.  The untwisted filling is
$\partial(D^2\times D^3)\cong S^4$.  The two attaching maps differ by
$\tau(\theta,x)=(\theta,A_\theta x)$ on $S^1\times S^2$, where
$A_\theta$ represents the nontrivial loop in $SO(3)$.  The same formula on
$S^1\times D^3$ extends $\tau$ over the exterior.  Thus both framed surgeries give
$S^4$.
\end{proof}

\begin{proposition}
\label{prop:generatorisotopy}
Let $M$ be a connected oriented smooth four-manifold.  Two oriented embedded
loops which represent the same conjugacy class in $\pi_1(M)$ are ambiently
isotopic.  Consequently the circle-surgery manifolds obtained from two
Albanese generators in $Y$ are diffeomorphic once the corresponding normal
framing classes are fixed.
\end{proposition}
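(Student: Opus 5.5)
The plan is to prove the first assertion by general position, upgrading a free homotopy to an isotopy and then to an ambient isotopy, and to deduce the consequence from Proposition~\ref{prop:input}.

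\textbf{Step 1: from conjugacy to a smooth homotopy.} Two oriented loops $\gamma_0,\gamma_1:S^1\to M$ represent the same conjugacy class in $\pi_1(M)$ exactly when they are freely homotopic. Smoothing, we obtain a smooth homotopy $H:S^1\times[0,1]\to M$ with $H_0=\gamma_0$ and $H_1=\gamma_1$, both embeddings.

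\textbf{Step 2: perturb the homotopy to an isotopy.} View $H$ as a path $t\mapsto H_t$ in $C^\infty(S^1,M)$ and put it in general position rel endpoints. The maps that fail to be embeddings form a subset of codimension at least two.
\begin{itemize}
\item \emph{Failure to be an immersion.} This requires $dH_t(\theta)=0$ at some $\theta$. That is three conditions on the two parameters $(\theta,t)$, an expected dimension of $2-3<0$.
\item \emph{Double points.} These require $H_t(\theta)=H_t(\theta')$ with $\theta\ne\theta'$. That is four conditions on the three parameters $(\theta,\theta',t)$, an expected dimension of $3-4<0$.
\end{itemize}
By the parametric transversality theorem, applied to the jet map and to the map $(\theta,\theta',t)\mapsto(H_t(\theta),H_t(\theta'))$ against the diagonal of $M\times M$, a small perturbation of $H$ rel $\{0,1\}$ is an embedding at every time. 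So it is an isotopy from $\gamma_0$ to $\gamma_1$.

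\textbf{Step 3: ambient isotopy.} Since $S^1$ is compact, the isotopy extension theorem gives a compactly supported ambient isotopy $\Phi_t$ of $M$ with $\Phi_1\circ\gamma_0=\gamma_1$. Orientations are respected because the homotopy is between oriented loops.

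The delicate point is Step 2. The double-point locus must be handled as a transversality statement on the configuration space of pairs with $\theta\ne\theta'$. Near the diagonal $\theta=\theta'$ one uses that the maps are immersions, so nearby distinct points have distinct images. This is standard and is where the hypothesis $\dim M=4$ enters, through $3<4$.

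\textbf{The consequence.} Let $\gamma,\gamma'$ be two Albanese generators, oriented as sections over the oriented core $\delta$. Each satisfies $\deg\bigl(\overline\alpha|_\gamma\bigr)=1$ onto the core, which generates $\pi_1(\mathcal M)$. By Proposition~\ref{prop:input}, $\overline\alpha_*$ is an isomorphism and $\pi_1(Y)\cong\mathbb Z$ is abelian. Hence $[\gamma]=[\gamma']$ as conjugacy classes, and the first part gives an ambient isotopy with end map $\Phi_1$ satisfying $\Phi_1(\gamma)=\gamma'$. The diffeomorphism $\Phi_1$ carries $\nu\gamma$ to a tubular neighborhood of $\gamma'$, which by uniqueness of tubular neighborhoods may be taken to be $\nu\gamma'$. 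It also carries a normal framing $f$ of $\gamma$ to the framing $\Phi_1\circ f$ of $\gamma'$. This defines a bijection between the two $\mathbb Z/2$-torsors of framing classes; ``corresponding'' framing classes means classes matched by it. Finally, $\Phi_1$ restricts to a diffeomorphism of the exteriors compatible with the gluing maps $\varphi_f$ and $\varphi_{\Phi_1\circ f}$ in~\eqref{eq:surgery}. Extending by the identity on $D^2\times S^2$ yields an orientation-preserving diffeomorphism $Y_{\gamma,f}\cong Y_{\gamma',\Phi_1\circ f}$.
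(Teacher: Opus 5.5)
Your proposal is correct and takes essentially the same route as the paper. Both arguments perturb a free homotopy rel endpoints by parametric (multi)jet transversality, use that the deleted configuration space has dimension $3<4$, use the immersion condition for pairs near the diagonal, and finish with isotopy extension and transport of the normal framing; you also spell out why two Albanese generators are freely homotopic, which the paper leaves implicit. One small slip: $\partial_\theta H_t(\theta)=0$ is four conditions, not three, because the derivative lies in a four-dimensional tangent space, so your expected dimension is $2-4$ and the conclusion only gets stronger.
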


\begin{proof}
Since the two oriented loops represent the same conjugacy class in
$\pi_1(M)$, they are freely homotopic.  Choose a smooth free homotopy
$H:S^1\times I\to M$ relative to $S^1\times\partial I$.  We perturb $H$, relative to
$S^1\times\partial I$, through level-preserving maps
$(x,t)\mapsto(H_t(x),t)$.

We use relative level-preserving multijet transversality for this family,
with the perturbation fixed for $t$ near $0$ and $1$; the one-jet and deleted
two-point conditions below may be imposed simultaneously
\cite[Chapter~II]{GG}.  The stratum of rank-zero
one-jets has codimension four, while the parameter domain $S^1\times I$ has
dimension two.  The perturbed family therefore has no singular points, so
every $H_t$ is an immersion.  For self-intersections,
consider the deleted configuration space
\[
 \mathcal C=\{(x,y,t)\in S^1\times S^1\times I:x\ne y\}
\]
and the map
\[
 F:\mathcal C\longrightarrow M\times M,\qquad
 F(x,y,t)=(H_t(x),H_t(y)).
\]
A double point of some $H_t$ is precisely a point of $F^{-1}(\Delta_M)$,
where $\Delta_M\subset M\times M$ is the diagonal.  The space $\mathcal C$
has dimension three, whereas $\Delta_M$ has codimension four in
$M\times M$.  The same relative parametric multijet perturbation may be chosen so that
$F$ is transverse to $\Delta_M$ away from the deleted diagonal.  Its expected
inverse-image dimension is $3-4=-1$, so this inverse image is empty.
Because the derivative in the $S^1$ direction is nowhere zero on the compact
set $S^1\times I$, there is a neighborhood of the diagonal in
$S^1\times S^1\times I$ on which $H_t(x)\ne H_t(y)$ for $x\ne y$.
Transversality on the complement eliminates all remaining double points.
Thus the perturbation, relative to the endpoints, simultaneously removes
singular points and double points.  Hence every $H_t$ is an embedding, and
$H_t$ is an isotopy of the two oriented loops.

By the isotopy extension theorem this isotopy extends to an ambient isotopy
of $M$.  Transporting a chosen normal framing along the ambient isotopy
identifies the corresponding framed tubular neighborhoods
$S^1\times D^3$.  Replacing these neighborhoods by $D^2\times S^2$ therefore
gives diffeomorphic circle-surgery manifolds.  This proves the final
assertion.
\end{proof}

\begin{proposition}\label{prop:one}
Let $\gamma\subset Y$ represent a generator of $\pi_1(Y)$.  For either normal
framing, $Y_\gamma$ is a smooth manifold homotopy equivalent to $\CP^2$ and is
orientation-preservingly homeomorphic to $\CP^2$.
\end{proposition}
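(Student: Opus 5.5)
We compute the invariants of $Y_\gamma$ with Lemma~\ref{lem:circle} and then apply Freedman's classification. We do not attempt to produce a positive sphere, so Proposition~\ref{prop:abstractblowup} is not used directly.

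First, by Proposition~\ref{prop:input}, $\pi_1(Y)\cong\mathbb Z$ is generated by $[\gamma]$. Lemma~\ref{lem:circle} then gives
\[
\pi_1(Y_\gamma)=1,\qquad e(Y_\gamma)=e(Y)+2=3,\qquad \sigma(Y_\gamma)=\sigma(Y)=1.
\]
As in the proof of Proposition~\ref{prop:abstractblowup}, simple connectivity and Poincar\'e duality give $H_1=H_3=0$ and $H_2$ torsion free. Since $e=3$, we get $H_2(Y_\gamma)\cong\mathbb Z$. With $\sigma=1$, the intersection form is unimodular of rank one and positive definite, hence equal to $\langle1\rangle$.

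Next, a simply connected closed four-manifold is determined up to homotopy by its intersection form (Milnor--Whitehead), so $Y_\gamma\simeq\CP^2$. This equivalence can be taken orientation-preserving, because the forms agree with signs.

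For the homeomorphism, Freedman's theorem says that a simply connected closed four-manifold with odd form $\langle1\rangle$ is homeomorphic either to $\CP^2$ or to the fake $*\CP^2$, and the two are distinguished by the Kirby--Siebenmann invariant. Since $Y_\gamma$ is smooth, its Kirby--Siebenmann invariant vanishes, so $Y_\gamma\approx\CP^2$ preserving orientation.

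The only delicate point is to check that no hypothesis depends on the framing. Lemma~\ref{lem:circle} holds for any gluing, so both $\varepsilon$ give the same invariants. Smoothness of $Y_\gamma$ for either framing is immediate from the smooth gluing in \eqref{eq:surgery}. One should also note that the $H_2$ computation uses only the Euler characteristic and simple connectivity, not any explicit handle description. Finally, the independence of the choice of generator $\gamma$ is not even needed, although Proposition~\ref{prop:generatorisotopy} provides it.
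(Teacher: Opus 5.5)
Your proposal is correct and follows essentially the same route as the paper. Both compute $\pi_1(Y_\gamma)=1$, $e=3$ and $\sigma=1$ from Lemma~\ref{lem:circle}, deduce the form $\langle1\rangle$, and apply Freedman's theorem with vanishing Kirby--Siebenmann invariant. The only difference is that you obtain the homotopy equivalence separately via Milnor--Whitehead, whereas the paper reads it off from the homeomorphism; this is harmless but unnecessary.
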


\begin{proof}
By Lemma~\ref{lem:circle}, $Y_\gamma$ is simply connected and
\[
 e(Y_\gamma)=3,\qquad \sigma(Y_\gamma)=1.
\]
Thus $b_2=1$ and its unimodular intersection form is the positive odd form
$\langle1\rangle$.  Freedman's classification theorem \cite{FQ}, together with the vanishing
Kirby--Siebenmann invariant of the smooth manifold $Y_\gamma$, gives the
claimed homeomorphism.  The homotopy conclusion follows immediately.
\end{proof}

\begin{corollary}
\label{cor:standardquotient}
If $Y\cong\CP^2\#(S^1\times S^3)$, then circle surgery with either framing on an
Albanese generator is diffeomorphic to $\CP^2$.  Hence a nonstandard output
would imply that $Y$ itself is nonstandard.
\end{corollary}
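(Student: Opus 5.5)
The corollary combines Proposition~\ref{prop:abstractblowup} with Proposition~\ref{prop:generatorisotopy}. Assume $\Phi:\CP^2\#(S^1\times S^3)\to Y$ is an orientation-preserving diffeomorphism, and let $\gamma\subset Y$ be an Albanese generator. Since $\overline\alpha|_\gamma$ has degree one onto the core of $\mathcal M$ and $\overline\alpha_*$ is an isomorphism by Proposition~\ref{prop:input}, $[\gamma]$ generates $\pi_1(Y)\cong\Z$.

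Pull $\gamma$ back to $\gamma'=\Phi^{-1}(\gamma)$. Its class generates $\pi_1(\CP^2\#(S^1\times S^3))\cong\Z$. Choose the connected-sum ball in $S^1\times S^3$ away from the product generator $\gamma_0=S^1\times\{pt\}$. After reversing the orientation of $\gamma_0$ if necessary, $\gamma_0$ and $\gamma'$ represent the same element of the abelian group $\pi_1$, hence the same conjugacy class. Proposition~\ref{prop:generatorisotopy} then gives an ambient isotopy carrying $\gamma'$ onto $\gamma_0$.

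Transport normal framings along this isotopy and along $\Phi$. This gives a bijection between the two framing classes on $\gamma$ and the two on $\gamma_0$. Consequently
\[
Y_\gamma^\varepsilon\cong(\CP^2\#(S^1\times S^3))_{\gamma_0}^{\varepsilon'}
\]
for the corresponding $\varepsilon'$. Since $\varepsilon$ ranges over both classes, $\varepsilon'$ does too.

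Apply Proposition~\ref{prop:abstractblowup} with $Q=S^1\times S^3$, which satisfies its homological hypotheses. It gives
\[
(Q\#\CP^2)_{\gamma_0}^{\varepsilon'}\cong\Sigma_Q^{\varepsilon'}\#\CP^2\cong S^4\#\CP^2\cong\CP^2
\]
for both framings. This proves the first assertion. The second assertion is its contrapositive: if some framed surgery on an Albanese generator were not diffeomorphic to $\CP^2$, then $Y\not\cong\CP^2\#(S^1\times S^3)$.

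The only delicate point is the framing bookkeeping. Ambient isotopies and diffeomorphisms act on framing classes by bijections of $\Z/2$-torsors, but they need not preserve any chosen labeling. The argument does not rely on a labeling: it only uses that every framing on $\gamma$ corresponds to some framing on $\gamma_0$, and Proposition~\ref{prop:abstractblowup} covers both framings there.
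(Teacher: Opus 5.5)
Your proof is correct and follows the paper's route: identify the Albanese generator with the product circle in the $S^1\times S^3$ summand via Proposition~\ref{prop:generatorisotopy}, then note that both framed fillings of its exterior give $S^4$, so the output is $\CP^2\#S^4\cong\CP^2$. The only difference is organizational. You cite the final clause of Proposition~\ref{prop:abstractblowup} for the two fillings, while the paper repeats that untwisted-filling and $\tau$-extension computation inline.
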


\begin{proof}
Under a diffeomorphism with the standard connected sum, the generator loop is
ambiently isotopic, by Proposition~\ref{prop:generatorisotopy}, to the product
circle in the $S^1\times S^3$ summand; if its orientation is reversed, use the
product reflection on the two factors.  Its exterior within that summand is
$S^1\times D^3$, and
the untwisted filling is
\[
 (S^1\times D^3)\cup(D^2\times S^2)=\partial(D^2\times D^3)\cong S^4.
\]
The boundary twist distinguishing the other framing is the map $\tau$ above
and extends over $S^1\times D^3$ by the same loop of rotations of $D^3$.  The second filling
is therefore also $S^4$, and the full surgery is
$\CP^2\#S^4\cong\CP^2$.
\end{proof}

\subsection{The dual sphere}
In the five-dimensional surgery trace, the belt sphere of the index-two
handle is the embedded sphere
\[
                         S=\{0\}\times S^2\subset Y_\gamma
\]
with its canonical trivial normal bundle in the surgery piece.  Removing
$\nu S=S^2\times D^2$ recovers the circle exterior, and regluing
$S^1\times D^3$ by the boundary map inherited from the chosen circle framing
recovers $Y$:
\[
 Y=\bigl(Y_\gamma\setminus\operatorname{int}(S^2\times D^2)\bigr)
       \cup (S^1\times D^3).
\]
By a \emph{parametrized dual sphere} we mean $S$ together with the inherited
identification
\[
 \rho_f:S^2\times S^1\longrightarrow
 \partial(Y_{\gamma,f}\setminus\operatorname{int}\nu S),
\]
where two identifications are equivalent if they differ by diffeomorphisms
extending over $S^2\times D^2$ on the source side and over
$S^1\times D^3$ after reverse surgery on the target side.  Since
$[S^2,SO(2)]=0$, an oriented trivial rank-two bundle over $S^2$ has a unique
homotopy class of trivialization.  The additional datum is therefore the
reverse-surgery boundary parametrization, not another normal framing of $S$.
Moreover, the sphere complement is precisely the circle complement,
\[
 Y_\gamma\setminus\operatorname{int}\nu S
 \cong Y\setminus\operatorname{int}\nu\gamma.
\]
The parametrized pair $(Y_\gamma,S)$ retains more of the surgery data than the
closed manifold $Y_\gamma$ alone.

The sphere $S$ is nullhomologous.  Indeed, it has square zero, whereas the
intersection form of $Y_\gamma$ is $\langle1\rangle$; since
$H_2(Y_\gamma;\mathbb Z)$ is torsion free, this forces $[S]=0$.  Thus the
remaining information concerns a knotted nullhomologous sphere and its parametrized
complement, rather than a new homology class.

General position gives
$\pi_1(Y\setminus\nu\gamma)\cong\pi_1(Y)\cong\mathbb Z$, but it does not
determine the peripheral system or the boundary parametrization relevant to
the dual surgery.

\begin{proposition}\label{prop:gluck}
Let $P^0$ and $P^1$ be the results of circle surgery on $\gamma$ using the two
normal framings.  If $S\subset P^0$ is the dual sphere, then $P^1$ is obtained
from $P^0$ by the Gluck twist along $S$.
\end{proposition}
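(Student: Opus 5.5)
We compare both surgeries along the common exterior. Put $C=Y\setminus\operatorname{int}\nu\gamma$ (here $M$ is the ambient manifold containing $\gamma$). Fix a reference framing $f_0$, and let $f_1=f_0\circ\tau$, where $\tau(\theta,x)=(\theta,A_\theta x)$ on $S^1\times D^3$ and $A_\theta$ represents the nontrivial element of $\pi_1(SO(3))$. By \eqref{eq:surgery},
\[
 P^i=C\cup_{\varphi_{f_i}}(D^2\times S^2),\qquad
 \varphi_{f_1}=\varphi_{f_0}\circ(\tau|_{S^1\times S^2}).
\]
The dual sphere $S=\{0\}\times S^2\subset P^0$ has tubular neighborhood $D^2\times S^2$, and its complement is $C$. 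So $P^1$ is obtained from $P^0$ by cutting out $\nu S=S^2\times D^2$ and regluing by the self-map $\tau|_{\partial}$ of $\partial\nu S=S^1\times S^2$. In coordinates $(\theta,x)\in S^1\times S^2$ this map is $(\theta,x)\mapsto(\theta,A_\theta x)$.

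The second step is to recognize this map as the Gluck twist. The Gluck twist along $S$ reglues $S^2\times D^2$ by
\[
 G(x,\theta)=(\mathrm{rot}_\theta(x),\theta),
\]
where $\mathrm{rot}_\theta$ is rotation of $S^2$ by angle $\theta$ about a fixed axis. This loop $\theta\mapsto\mathrm{rot}_\theta$ generates $\pi_1(SO(3))\cong\mathbb Z/2$. Since $A_\theta$ represents the same nontrivial class, $\theta\mapsto A_\theta$ is homotopic to $\theta\mapsto\mathrm{rot}_\theta$ as loops in $SO(3)$. A homotopy of loops gives an isotopy of the boundary diffeomorphisms $\tau|_\partial$ and $G$. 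Isotopic gluing maps produce diffeomorphic manifolds, because the isotopy can be absorbed into a collar of $\partial C$. Hence $P^1\cong P^0$ reglued by $G$, which is the Gluck twist $P^0_S$.

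The main obstacle is bookkeeping of conventions. The identification $\partial(D^2\times S^2)=S^1\times S^2$ must be matched with $\partial(S^2\times D^2)=S^2\times S^1$, and we must check that orientation reversal of the gluing and a possible reversal of $\theta$ do not change the conclusion. Both are harmless for the same reason. The inverse loop $\theta\mapsto A_\theta^{-1}$ and a reparametrized loop $\theta\mapsto A_{-\theta}$ represent the same element of $\mathbb Z/2$. Conjugating by the coordinate swap preserves the form of the map $(\theta,x)\mapsto(\theta,A_\theta x)$. Finally, the twisted map $\tau|_\partial$ does not extend over $D^2\times S^2$, since it represents the nontrivial class. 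This shows the regluing is genuinely the Gluck twist and not a trivial regluing, although the identification of $P^1$ with $P^0_S$ holds regardless.
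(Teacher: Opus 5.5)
Your proposal is correct and follows essentially the same route as the paper. Both arguments write the two framings as differing by the twist $\tau(\theta,x)=(\theta,A_\theta x)$ on the common boundary $\partial\nu\gamma=\partial\nu S$, interchange the factors, and recognize the resulting regluing of $S^2\times D^2$ as the Gluck map; your observation that homotopic loops in $SO(3)$ give isotopic gluing maps makes explicit a step the paper leaves implicit.
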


\begin{proof}
Choose product coordinates $\partial\nu\gamma=S^1\times S^2$.  The two
normal framings differ by
\[
 \tau(e^{i\theta},x)=(e^{i\theta},r_\theta(x)),
\]
where $\theta\mapsto r_\theta$ represents the nonzero element of
$\pi_1(SO(3))$.  After interchanging factors to identify this boundary with
$\partial(S^2\times D^2)=S^2\times S^1$, the same map is
\[
 (x,e^{i\theta})\longmapsto(r_\theta(x),e^{i\theta}),
\]
the Gluck regluing map.  Thus replacing one circle framing by the other is
exactly a Gluck twist on the dual sphere \cite{Gluck}.
\end{proof}

Thus Question~\ref{q:one} is a problem about one framed surgery together with
the Gluck twist on its dual sphere, rather than two unrelated closed
four-manifolds.

\subsection{Equivariant surgery upstairs}
Let $B\subset Y$ be as above.  If an Albanese generator
$\gamma$ is chosen disjoint from $B$, its inverse image is an unbranched
double cover of a circle.  It is therefore either one circle mapping with
degree two or two circles each mapping with degree one; the alternative is
determined by the covering monodromy of $\gamma$ in $Y\setminus B$.

For an Albanese generator this alternative can be decided.

\begin{theorem}
\label{thm:covermonodromy}
Every loop in $\overline\alpha^{-1}(\operatorname{int}\mathcal M)$ whose
image in the M\"obius band
represents an odd power of its core has nontrivial covering monodromy.  In
particular, the inverse image of an Albanese generator is one circle mapping
with degree two.
\end{theorem}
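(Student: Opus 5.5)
Consider the unbranched double cover $q_X:X\setminus R\to Y\setminus B$ and its restriction over the interior of the M\"obius band. The key observation is that this cover can be compared with the orientation double cover of $\operatorname{int}\mathcal M$. On the base, $E\to\mathcal M=E/\langle c_E\rangle$ restricts over $\operatorname{int}\mathcal M$ to an unbranched double cover $E\setminus E(\R)\to\operatorname{int}\mathcal M$. Since $E(\R)$ is a single circle (Proposition~\ref{prop:fixed-subgroup}), $E\setminus E(\R)$ is an open annulus. Its map to the open M\"obius band is the connected orientation cover. Its monodromy is therefore the homomorphism $\pi_1(\operatorname{int}\mathcal M)\cong\Z\to\Z/2$ that is nontrivial on the core: a loop representing the $m$-th power of the core lifts to a closed loop exactly when $m$ is even.

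Next we use equivariance of $\alpha$ to compare covers. Over $\operatorname{int}\mathcal M$ we have the commutative square
\[
 \begin{CD}
 \alpha^{-1}(E\setminus E(\R)) @>\alpha>> E\setminus E(\R)\\
 @Vq_XVV @VVq_EV\\
 \overline\alpha^{-1}(\operatorname{int}\mathcal M) @>\overline\alpha>> \operatorname{int}\mathcal M
 \end{CD}
\]
with $\alpha^{-1}(E\setminus E(\R))=q_X^{-1}(\overline\alpha^{-1}(\operatorname{int}\mathcal M))$. Over $\operatorname{int}\mathcal M$ there are no fixed points upstairs, because $R$ maps into $E(\R)$. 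Hence the left column is a free $\Z/2$-quotient, i.e.\ an unbranched double cover. The map $\alpha$ intertwines the deck involutions $c$ and $c_E$. It follows that the left double cover is the pullback of the right one under $\overline\alpha$. To check this, map $X\setminus\alpha^{-1}E(\R)$ to the fiber product $\overline\alpha^{-1}(\operatorname{int}\mathcal M)\times_{\operatorname{int}\mathcal M}(E\setminus E(\R))$ by $x\mapsto(q_X x,\alpha x)$. This map is $\Z/2$-equivariant between free double covers of the same base, so it is an isomorphism of covers. Consequently the monodromy character of $q_X$ on a loop $\ell$ equals the monodromy character of $q_E$ on $\overline\alpha\circ\ell$.

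It then follows that a loop $\ell\subset\overline\alpha^{-1}(\operatorname{int}\mathcal M)$ whose image represents an odd power of the core has nontrivial monodromy, so its preimage is connected. For an Albanese generator $\gamma$, the construction in Section~2 places $\gamma$ over a core $\delta\subset\operatorname{int}\mathcal M$ with degree one. Hence $q_X^{-1}(\gamma)$ is a single circle mapping with degree two.

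I expect the main obstacle to be the careful identification of $q_E$ over $\operatorname{int}\mathcal M$ as the nontrivial (orientation) cover, together with the verification that the real locus $R$ lies entirely over $E(\R)$. The latter was shown in Section~2, and the former follows because $c_E$ is a reflection whose quotient is a M\"obius band rather than an annulus. One should also check that the pullback identification involves no hidden twist by $\sigma$. This is handled by working with the fixed reflection $c$ throughout.
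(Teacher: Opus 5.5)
Your proposal is correct and follows essentially the paper's argument. The paper uses the same Cartesian square of free double covers over $\operatorname{int}\mathcal M$, identifies $q_E$ there as the connected orientation cover, and concludes that the $q_X$-monodromy of a loop equals the $q_E$-monodromy of its image, which is nontrivial on odd powers of the core. Your check of the pullback property via an equivariant map of principal $\mathbb Z/2$-bundles over the same base is only a repackaging of the paper's fiberwise count that exactly one point of each free $c$-orbit maps to a chosen lift $e$.
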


\begin{proof}
The Albanese map is equivariant with respect to $c$ on $X$ and complex
conjugation $c_E$ on the elliptic curve $E$.  Away from the fixed sets it
gives a pullback square of unbranched double covers
\[
\begin{CD}
 \alpha^{-1}(E\setminus E^{c_E}) @>{\alpha}>> E\setminus E^{c_E}\\
 @V{q_X}VV @VV{q_E}V\\
 \overline\alpha^{-1}(\operatorname{int}\mathcal M)
 @>{\overline\alpha}>> \operatorname{int}\mathcal M .
\end{CD}
\]
To verify that the square is Cartesian, fix $y\in
\overline\alpha^{-1}(\operatorname{int}\mathcal M)$ and a lift
$e\in E\setminus E^{c_E}$ of $\overline\alpha(y)$.  The fiber
$q_X^{-1}(y)$ is a free $c$-orbit, since the fixed set maps to
$\partial\mathcal M$ whereas $y$ lies over $\operatorname{int}\mathcal M$.
Equivariance shows that the Albanese
images of its two points are $e$ and $c_E(e)$, so exactly one point maps to
$e$.  This gives the required identification with the fiber product.
The cover on the right is the orientation double cover of the M\"obius band.
Its monodromy takes the core generator of
$\pi_1(\mathcal M)\cong\mathbb Z$ to the nonzero element of $\mathbb Z/2$.
If a loop in $\overline\alpha^{-1}(\operatorname{int}\mathcal M)$ had
trivial monodromy for $q_X$, it would have a
closed lift to $X\setminus X^c$; applying $\alpha$ would give a closed lift of
its image in $\operatorname{int}\mathcal M$.  This is impossible for an odd
power of the core.  Since
$\overline\alpha_*:\pi_1(Y)\to\pi_1(\mathcal M)$ is an isomorphism, the
assertion applies to the Albanese generator chosen above.
\end{proof}

Thus the lifted loop is connected.  The induced connected double cover of
$S^1\times S^2$ cannot extend as an
unbranched cover of the simply connected manifold $D^2\times S^2$; a lift
must therefore contain branching.  In the pulled-back product coordinates
there is a standard branched extension.

\begin{theorem}
\label{thm:branchedlift}
With the notation above, let
$\widetilde P=X_{\widetilde\gamma}$ be circle surgery using the pullback of
the chosen downstairs framing.  Then the involution extends over the surgery,
and its quotient is
$P=Y_\gamma$.  More precisely,
\[
                \widetilde P\longrightarrow P
\]
is a double branched cover with branch locus
\[
                         B\sqcup S,
\]
where $S\subset P$ is the dual sphere of the circle surgery.  Moreover,
\[
 e(\widetilde P)=5,\qquad \sigma(\widetilde P)=1,
 \qquad
 \pi_1(\widetilde P)=
 \pi_1(X)/\langle\!\langle[\widetilde\gamma]\rangle\!\rangle.
\]
The two framings downstairs induce the same framing class on
$\widetilde\gamma$.  For each downstairs framing the pullback construction
therefore gives a branched covering of the stated form; the underlying
upstairs surgery manifolds are diffeomorphic.  We make no identification of
the two extended involutions.
\end{theorem}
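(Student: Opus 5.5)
We work entirely in a $c$-invariant neighbourhood of $\widetilde\gamma$, using Theorem~\ref{thm:covermonodromy} and Lemma~\ref{lem:circle}.

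\emph{Step 1: local model and extension of the involution.} Since $\gamma\subset Y\setminus B$ and its monodromy is nontrivial, $\widetilde\gamma=q_X^{-1}(\gamma)$ is one circle double covering $\gamma$, and $c$ acts on it freely as the rotation $\theta\mapsto\theta+\pi$. Choose the downstairs framing $f:S^1\times D^3\to\nu\gamma$ and pull it back along the connected double cover $S^1\to S^1$, $z\mapsto z^2$. This gives a framing $\widetilde f:S^1\times D^3\to\nu\widetilde\gamma$ in which $c$ becomes $(z,x)\mapsto(-z,x)$. Extend this involution over the filling $D^2\times S^2$ by $(w,x)\mapsto(-w,x)$. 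Its fixed set is $\{0\}\times S^2$, and the quotient map is $(w,x)\mapsto(w^2,x)$, which is smooth in the standard branched coordinates. The quotient of $D^2\times S^2$ is again $D^2\times S^2$, glued to $Y\setminus\operatorname{int}\nu\gamma$ by $\varphi_f$. Hence
\[
\widetilde P/\langle c\rangle=P=Y_\gamma,
\]
and the branch locus is $B\sqcup(\{0\}\times S^2)=B\sqcup S$. The orientation check is the same as in Section~2.

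\emph{Step 2: the framing statement.} The two downstairs framings differ by $\tau(z,x)=(z,r_z x)$, where $r$ is the generator of $\pi_1(SO(3))=\mathbb Z/2$. Pulling back along $z\mapsto z^2$ replaces $r$ by $r\circ(\text{squaring})$, which represents $2[r]=0$. So both pullbacks are homotopic framings of $\widetilde\gamma$. Equivalently, the upstairs twist is $\tau^2$, which is null-homotopic as a map into $SO(3)$, and therefore extends over $S^1\times D^3$. The two upstairs surgery manifolds are thus diffeomorphic. The diffeomorphism need not intertwine the extended involutions, which is why the statement makes no such identification.

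\emph{Step 3: invariants.} Lemma~\ref{lem:circle} applied to $X$ with $e(X)=3$ and $\sigma(X)=1$ gives
\[
e(\widetilde P)=5,\qquad \sigma(\widetilde P)=1,\qquad
\pi_1(\widetilde P)=\pi_1(X)/\langle\!\langle[\widetilde\gamma]\rangle\!\rangle.
\]
As a cross-check, the branched-cover formula gives
\[
e(\widetilde P)=2e(P)-e(B\sqcup S)=2\cdot3-(-1+2)=5.
\]

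\emph{Main obstacle.} The delicate point is Step 1's claim that the equivariant framing chosen upstairs is exactly the pullback framing. This requires the normal bundle of $\widetilde\gamma$ to be the pullback, as an $SO(3)$-bundle with its $c$-action, of the normal bundle of $\gamma$. That holds because $q_X$ is a local diffeomorphism near $\widetilde\gamma$. One must also confirm that the glued involution on $\widetilde P$ is smooth across $\partial\nu\widetilde\gamma$. I would handle this by using collars on which everything is a product, so the two pieces of the involution agree on a neighbourhood of the gluing boundary.
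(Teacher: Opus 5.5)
Your proposal is correct and follows essentially the same route as the paper: the connected unbranched double cover $(z,x)\mapsto(z^2,x)$ of $S^1\times D^3$, extended over the filling by $(w,x)\mapsto(w^2,x)$ with the new branch component $\{0\}\times S^2=S$, the framing argument via $2[r]=0$ in $\pi_1(SO(3))$, and the invariants from Lemma~\ref{lem:circle} applied upstairs. The paper also cross-checks $\sigma(\widetilde P)$ with the branched-cover formula using $e_N(B\sqcup S)=e_N(B)=2$; your only slip is cosmetic, since the lifted twist $(z,x)\mapsto(z,r_{z^2}x)$ is homotopic to, not equal to, $\tau^2=(z,r_z^2x)$, and this does not affect the argument.
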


\begin{proof}
The original branch surface $B$ is disjoint from $\nu\gamma$ because
$\gamma\subset\overline\alpha^{-1}(\operatorname{int}\mathcal M)$ whereas
$B$ maps to $\partial\mathcal M$.  Thus the construction below leaves $B$
unchanged and creates one disjoint branch component in the filling.
Over $\nu\gamma=S^1\times D^3$, the connected double cover is again
$S^1\times D^3$, with the degree-two map on the first factor.  In framed
coordinates the boundary cover is
\[
 S^1\times S^2\longrightarrow S^1\times S^2,
 \qquad(e^{i\phi},x)\longmapsto(e^{2i\phi},x).
\]
It extends over the filling by
\[
 D^2\times S^2\longrightarrow D^2\times S^2,
                 \qquad (z,x)\longmapsto(z^2,x).
\]
This map is compatible with the attaching map determined by the pulled-back
framing and is branched precisely along $\{0\}\times S^2$.  Gluing it to
$q_X$ on the exterior gives the asserted branched cover and identifies the
new branch component with the dual sphere.  The boundary restriction is
exactly the pullback of the chosen downstairs attaching map, so the two
pieces glue compatibly.  Here $S\cdot S=0$, so
the normal bundle of $S=\{0\}\times S^2$ is the product bundle coming from
the oriented $D^2$ factor and has Euler number zero, compatibly with the
normal-Euler convention fixed in Section~2.  Moreover,
$\chi(B)=2-3=-1$ and $\chi(B\sqcup S)=-1+2=1$.  Hence
$e_N(B\sqcup S)=e_N(B)=2$; the branched-cover formulas give
$e(\widetilde P)=2e(P)-\chi(B\sqcup S)=5$ and
$\sigma(\widetilde P)=2\sigma(P)-e_N(B\sqcup S)/2=1$.
The defining signature formula for $X\to Y$ gives $e_N(B)=2$.
The same numerical formulas, and the fundamental-group formula, follow from
Lemma~\ref{lem:circle} applied upstairs.  Finally, changing the framing
downstairs adds the generator of $\pi_1(SO(3))\cong\mathbb Z/2$.  Pullback by
the degree-two map of the circle gives twice that generator, hence zero.  The
two pulled-back framings are homotopic, so the upstairs surgery manifolds are
diffeomorphic relative to their common exterior.  This relative
diffeomorphism need not conjugate the two extended involutions.
\end{proof}

\begin{proposition}
\label{prop:upstairshomology}
For an Albanese generator chosen as above, $[\widetilde\gamma]$ is primitive
in $H_1(X;\mathbb Z)\cong\mathbb Z^2$.  Consequently
\[
 H_0(\widetilde P)\cong H_4(\widetilde P)\cong\mathbb Z,
 \quad H_1(\widetilde P)\cong H_3(\widetilde P)\cong\mathbb Z,
 \quad H_2(\widetilde P)\cong\mathbb Z^5,
\]
and
\[
                   b_2^+(\widetilde P)=3,
                   \qquad b_2^-(\widetilde P)=2.
\]
\end{proposition}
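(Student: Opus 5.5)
The plan has two parts: first show that $[\widetilde\gamma]$ is primitive, then obtain the homology of $\widetilde P$ from Lemma~\ref{lem:circle} and the known invariants of $X$.

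For primitivity, I compose with the Albanese map. By Theorem~\ref{thm:covermonodromy}, $\widetilde\gamma$ is a single circle covering $\gamma$ with degree two. Hence $\alpha(\widetilde\gamma)$ is the connected lift to $E\setminus E^{c_E}$ of the loop $\overline\alpha(\gamma)$, which has degree one onto the core of $\mathcal M$. Under the orientation double cover $q_E$, the lift of the core of a M\"obius band is the core of the annulus $E\setminus E^{c_E}$ (that annulus is connected because $E(\R)$ is a single circle). This core is a simple closed curve on the torus $E$ and is not nullhomologous, since its complement in $E$ is connected; a nonseparating simple closed curve on a torus is primitive. Hence $\alpha_*[\widetilde\gamma]$ is primitive in $H_1(E;\Z)$. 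To make this rigorous I will check that $\alpha_*[\widetilde\gamma]$ is homotopic within the annulus to $\pm$ the core. Since $\alpha_*:H_1(X;\Z)\to H_1(E;\Z)$ is an integral isomorphism (Proposition~\ref{prop:fixed-subgroup}), $[\widetilde\gamma]$ is primitive. As a cross-check, $\alpha_*[\widetilde\gamma]$ must be anti-invariant under $c_E$ up to orientation, so it spans the $(-1)$-eigenlattice $\Z(1,1)$, which is primitive.

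For the homology, $H_1(\widetilde P)=H_1(X)/\langle[\widetilde\gamma]\rangle\cong\Z^2/\Z\cong\Z$. This uses primitivity together with Lemma~\ref{lem:circle} after abelianizing, since $\pi_1(\widetilde P)$ is the normal quotient. Poincar\'e duality gives $H^3\cong H_1\cong\Z$ free. Universal coefficients give that $H_3\cong H^1$ is free of rank $b_1=1$, and that the torsion of $H_2$ equals the torsion of $H^3\cong H_1$, which is zero. Then $e(\widetilde P)=5=1-1+b_2-1+1$ gives $b_2=5$, and $\sigma=1$ gives $b_2^+=3$ and $b_2^-=2$. The values $e=5$ and $\sigma=1$ come from Theorem~\ref{thm:branchedlift}.

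The main obstacle is the first step: justifying that the connected lift of the Albanese generator maps to a primitive class in $H_1(E;\Z)$, rather than to some multiple. I would argue it via the explicit model $E=\mathbb C/\Lambda$ with real structure $z\mapsto\omega\bar z$ from Lemma~\ref{lem:common-fixed-point}. In that model the annulus core is the image of the anti-fixed line, and its translation lattice is the primitive sublattice $\Z(1,1)$ of $\Lambda$ in CKY coordinates. It follows that the degree-two lift of a degree-one core loop is exactly one generator of that lattice.
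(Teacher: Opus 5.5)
Your homology computation is the same as the paper's, and your main primitivity argument is valid but takes a different route. The paper first notes that the deck involution of the connected double cover $\widetilde\gamma\to\gamma$ is a half-rotation, so $c_*[\widetilde\gamma]=[\widetilde\gamma]$. Hence $\alpha_*[\widetilde\gamma]$ lies in the invariant lattice $\Z e_1$, on which $q_{E*}$ is multiplication by two and therefore injective, and a degree count gives $\alpha_*[\widetilde\gamma]=e_1$. You instead work inside the annulus $E\setminus E^{c_E}$. Since $\pi_1(E\setminus E^{c_E})\cong\Z\to\pi_1(\mathcal M)\cong\Z$ is multiplication by two, a loop there whose image is twice the core of $\mathcal M$ is homotopic to $\pm$ the annulus core. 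That core is a nonseparating simple closed curve, hence primitive. This route avoids the invariance step entirely. You should state the $\pi_1$-injectivity explicitly, because it is what actually proves the degree-one claim you flag as the obstacle.

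Your cross-check and your explicit-model paragraph, however, are wrong. The class is invariant, not anti-invariant: $c$ acts on $\widetilde\gamma$ by a half-rotation, so $c_{E*}\alpha_*[\widetilde\gamma]=\alpha_*c_*[\widetilde\gamma]=\alpha_*[\widetilde\gamma]$. Nor is the annulus core the image of an anti-fixed line. For $z\mapsto\omega\bar z$, write $z=s(1+\omega)+t(1-\omega)$. The preimage of $E(\R)$ in $\mathbb C$ is then $\{t\in\tfrac12\Z\}$, so every line in the anti-fixed direction $1-\omega$ meets $E(\R)$. The annulus is the image of the strip $0<t<\tfrac12$, and its core is the image of $t=\tfrac14$, which is parallel to $E(\R)$. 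Equivalently, any loop in $E\setminus E(\R)$ has zero intersection number with $[E(\R)]=\pm(1,-1)$ and so lies in $\Z(1,-1)$, whereas $(1,1)\cdot(1,-1)=\pm2$. Thus $\alpha_*[\widetilde\gamma]=\pm(1,-1)$, the generator of $\Fix(M)$, in agreement with the paper's $e_1$. Because $\Z(1,1)$ and $\Z(1,-1)$ are both primitive, your final conclusion survives. Still, that paragraph asserts a false identification, so replace it with the annulus argument above.
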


\begin{proof}
Cartwright, Koziarz, and Yeung identify the map induced by the Albanese
fibration as an isomorphism
\[
       \alpha_*:H_1(X;\mathbb Z)\longrightarrow H_1(E;\mathbb Z)
       \cong\mathbb Z^2
\]
\cite[Section~5.4]{CKY}; see also \cite[Proposition~2.7]{SS}.  The nontrivial
deck translation of the connected double cover
$\widetilde\gamma\to\gamma$ is the half-rotation of a circle and therefore
preserves its orientation.  Hence
$c_*[\widetilde\gamma]=[\widetilde\gamma]$.  The invariant sublattice of
complex conjugation can be read directly from the M\"obius-band quotient.
Choose a basis $e_1,e_2$ of $H_1(E;\mathbb Z)$ adapted to its orientation
double cover, so that
\[
 c_{E*}e_1=e_1,\qquad c_{E*}e_2=e_1-e_2.
\]
It follows that $H_1(E;\mathbb Z)^{c_{E*}}=\mathbb Ze_1$, with $e_1$
primitive.  The class $e_1$ is represented by the lift of the core and also
by a component over the boundary.  Since the boundary of a M\"obius band
represents twice its core, after choosing the sign of $e_1$ one has
$q_{E*}e_1=2[\delta]$.
Let $\widetilde\delta\subset E\setminus E^{c_E}$ be the connected lift of
$\delta$ representing $e_1$.
Choose orientations so that $\overline\alpha|_\gamma$ has degree one.
Commutativity and connectedness give
$\alpha(\widetilde\gamma)\subset q_E^{-1}(\delta)=\widetilde\delta$.
Commutativity gives
\[
 \deg(q_E|_{\widetilde\delta})\,
 \deg(\alpha|_{\widetilde\gamma})
 =\deg(\overline\alpha|_\gamma)\,
  \deg(q_X|_{\widetilde\gamma}).
\]
Both vertical degrees are two, and therefore
$\deg(\alpha|_{\widetilde\gamma})=1$.  It follows that
$q_{E*}\alpha_*[\widetilde\gamma]=2[\delta]=q_{E*}e_1$.
On the invariant primitive generator, $q_{E*}$ is multiplication by two;
in particular it is injective on the rank-one invariant sublattice.  It
follows that $\alpha_*[\widetilde\gamma]=e_1$.  Since $\alpha_*$ is an
isomorphism, $[\widetilde\gamma]$ is primitive in $H_1(X)$.

Circle surgery quotients first homology by this class, so
$H_1(\widetilde P)\cong\mathbb Z$.  This homology group is torsion free, and Poincar\'e
duality and the universal coefficient theorem give
$H_3(\widetilde P)\cong\mathbb Z$ and torsion-free $H_2$.  Since
$e(\widetilde P)=5$ and $\sigma(\widetilde P)=1$, the remaining assertions
follow.
\end{proof}

\begin{proof}[Proof of Theorem~\ref{thm:circle}]
Proposition~\ref{prop:one} gives the homeomorphism
$Y_\gamma\approx\CP^2$.  Theorem~\ref{thm:covermonodromy} shows that
$\widetilde\gamma$ is connected.  Theorem~\ref{thm:branchedlift} gives
the branched covering, its branch locus, and the framing assertion.
Proposition~\ref{prop:upstairshomology} gives the stated homology and
intersection-form data.
\end{proof}

\section{The branch surface and the positive class}

We first record two standard facts about a finite branched quotient.

\begin{lemma}\label{lem:rationalquotient}
Pullback identifies rational cohomology downstairs with the invariant
cohomology upstairs:
\[
 q_X^*:H^2(Y;\mathbb Q)\longrightarrow H^2(X;\mathbb Q)^{c^*}.
\]
\end{lemma}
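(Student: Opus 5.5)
The plan is the standard transfer argument for a finite group acting on a space, applied to the group $G=\langle c\rangle\cong\mathbb Z/2$ acting on $X$ with orbit space $Y$. Rational coefficients are used throughout, so the order $|G|=2$ is invertible.

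\emph{Step 1: the transfer.} Since $q_X:X\to Y$ is the orbit map of a finite group action on a finite simplicial complex, we can triangulate $X$ so that $c$ acts simplicially and $Y$ inherits a triangulation (equivariant triangulation of smooth actions, or the smooth linear model near $R$ from Section~2). Send each simplex $\sigma$ of $Y$ to the sum of its preimages, counted with multiplicity two on simplices lying in $R$. This defines a chain map
\[
 \tau:C_*(Y;\mathbb Q)\longrightarrow C_*(X;\mathbb Q)^{G},
\]
and dually a transfer $\tau^*:H^*(X;\mathbb Q)\to H^*(Y;\mathbb Q)$.

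\emph{Step 2: the identities.} At the chain level we verify
\[
 q_{X*}\circ\tau=2\cdot\mathrm{id},\qquad
 \tau\circ q_{X*}=1+c_*.
\]
Passing to cohomology gives
\[
 \tau^*q_X^*=2,\qquad q_X^*\tau^*=1+c^*.
\]

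\emph{Step 3: conclusion.} Since $c^*$ is an involution, we have $c^*q_X^*=(q_Xc)^*=q_X^*$, so the image of $q_X^*$ lies in $H^2(X;\mathbb Q)^{c^*}$. The first identity shows that $q_X^*$ is injective. For surjectivity, take an invariant class $x$. Then
\[
 x=\tfrac12(1+c^*)x=q_X^*\bigl(\tfrac12\tau^*x\bigr),
\]
which exhibits $x$ as a pullback.

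The only real obstacle is the careful construction of $\tau$ near the fixed surface, where the weighting of fixed simplices must be checked. As an alternative, we can bypass the construction by quoting the classical result (Bredon, Chapter~III, Theorem~2.4) that $H^*(X/G;\mathbb Q)\cong H^*(X;\mathbb Q)^G$ for any finite group acting on a paracompact space, with the isomorphism induced by the quotient map.
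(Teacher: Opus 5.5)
Your argument is correct and is essentially the paper's. Your transfer $\tau$ is the averaging (norm) map from the coinvariant chains $C_*(Y;\mathbb Q)\cong C_*(X;\mathbb Q)_{\langle c\rangle}$ to the invariant chains; the paper combines exactly this identification with exactness of invariants over $\mathbb Q$, and both arguments rest on Bredon, Chapter~III, Theorem~2.4. The weight two on simplices in $R$ is what makes $\tau$ a chain map, once you subdivide, as the paper does, so that every simplex stabilized by $c$ is fixed pointwise.
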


\begin{proof}
Choose a $c$-equivariant triangulation and pass to a barycentric subdivision,
so no simplex is inverted by its stabilizer.  The rational cellular chains
of the quotient identify with the coinvariants of the cellular chains of
$X$.  Averaging over $\langle c\rangle$ identifies coinvariants with
invariants in characteristic zero.  Since taking invariants is exact for a
finite group over $\mathbb Q$, dualizing gives
$H^*(Y;\mathbb Q)\cong H^*(X;\mathbb Q)^{c^*}$, with the isomorphism induced
by pullback; compare \cite[Chapter~III, Theorem~2.4]{Bredon}.
\end{proof}

\begin{lemma}\label{lem:branchedthom}
Let $F\subset Y$ be an oriented embedded surface transverse to $B$, and
orient $\widehat F=q_X^{-1}(F)$ by the local branched-cover model.  Then
\[
 \operatorname{PD}_X[\widehat F]
   =q_X^*\operatorname{PD}_Y[F],
 \qquad [\widehat F]^2=2F^2.
\]
\end{lemma}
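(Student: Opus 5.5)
The plan is to prove the two assertions of Lemma~\ref{lem:branchedthom} separately: first the Poincar\'e-duality identity, by a local computation in a tubular neighborhood of $F$, and then the self-intersection formula, by cupping and evaluating on $[X]$.

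\emph{Duality identity.} I will identify $\operatorname{PD}_Y[F]$ with the Thom class $\tau_F$ of the normal bundle $\nu F$, pushed forward from $H^2(\nu F,\partial\nu F)$ to $H^2(Y)$. The preimage $q_X^{-1}(\nu F)$ is a tubular neighborhood of $\widehat F$, so it suffices to show two things:
\begin{itemize}
\item[(i)] $\widehat F$ is a smooth closed oriented surface;
\item[(ii)] $q_X^*\tau_F$ restricts to a generator of each normal fiber of $\widehat F$.
\end{itemize}
Once both hold, $q_X^*\tau_F$ is the Thom class of $\widehat F$, and naturality of the extension-by-zero maps gives $\operatorname{PD}_X[\widehat F]=q_X^*\operatorname{PD}_Y[F]$.

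Away from $B$ both claims are immediate. There $q_X$ is an orientation-preserving local diffeomorphism, by the orientation convention of Section~2, so normal fibers map isomorphically and the Thom class pulls back to the Thom class.

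Near a transverse point $p\in F\cap B$, I will use the smooth coordinates of Section~2, namely $(x,z)\mapsto(x,z^2)$ with $B=\{w=0\}$ downstairs. Transversality lets me arrange, after a small isotopy, that $F$ is locally the fiber $\{x=x_0\}$, a $w$-disk. Its preimage is the $z$-disk, mapped by $z\mapsto z^2$, so $\widehat F$ is smooth there and $\widehat F\to F$ is a double cover branched at $p$. The normal direction of $\widehat F$ is the $x$-plane, which maps identically to the normal $x$-plane of $F$. Hence the local degree on normal fibers is $+1$ once $\widehat F$ is oriented by pulling back the orientation of $F$ through the branched cover, which is exactly the local model orientation in the statement. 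This proves (ii) at branch points and completes (i).

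\emph{Self-intersection.} I will then compute
\[
 [\widehat F]^2=\langle q_X^*(u_F\cup u_F),[X]\rangle
 =\langle u_F\cup u_F,q_{X*}[X]\rangle,
 \qquad u_F=\operatorname{PD}_Y[F],
\]
and conclude using $q_{X*}[X]=2[Y]$, which holds because $q_X$ is an orientation-preserving degree-two map.

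The main obstacle is the orientation bookkeeping at the branch points: I must check that the orientation on $\widehat F$ from the local model is compatible with the complex orientation of $X$ in the normal direction, so that no sign flip occurs at the points of $F\cap B$. I expect the fixed convention that the coordinates $(x,z)$ are oriented compatibly upstairs and downstairs to settle this, with the normal $x$-plane carrying the same orientation on both sides.
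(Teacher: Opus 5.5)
Your proposal is correct and follows essentially the same route as the paper. It identifies $\operatorname{PD}_Y[F]$ with the Thom class, uses the local model $(x,z)\mapsto(x,z^2)$ to see that $\widehat F$ is smooth at the transverse branch points and that the normal direction pulls back with multiplicity one, and then gets $[\widehat F]^2=2F^2$ by evaluating the pulled-back cup square on $q_{X*}[X]=2[Y]$. The isotopy you invoke is harmless but not needed: a transverse $F$ is locally a graph $x=g(w)$, and its preimage $x=g(z^2)$ is already smooth.
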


\begin{proof}
Away from $B$ this is the ordinary pullback identity.  Near a point of
$F\cap B$, choose oriented smooth source coordinates $(z,w)$ and target
coordinates $(Z,W)$
in which
\[
 q_X(z,w)=(Z,W)=(z^2,w),\qquad B=\{Z=0\},\qquad F=\{W=0\}.
\]
Thus $\widehat F=\{w=0\}$ is smooth at the branch points; this is where
transversality of $F$ to $B$ is used.
The Thom class in the normal $w$-direction pulls back with multiplicity one,
so the identity extends across the branch points.  The formula for the square
follows by evaluating the cup product and using
$q_{X*}[X]=2[Y]$.
\end{proof}

\begin{proposition}
\label{prop:branchmodtwo}
The branch surface is nullhomologous modulo two:
\[
 [B]=0\in H_2(Y;\mathbb Z/2).
\]
Consequently every closed surface in $Y$ has even mod-two intersection with
$B$.
\end{proposition}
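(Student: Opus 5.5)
The plan is to use the double cover $X\setminus R\to Y\setminus B$ and the long exact sequence of the pair $(Y,Y\setminus B)$ with $\mathbb Z/2$ coefficients. The key point is that the class of this cover must map to the mod-two Thom class of $B$. Exactness then forces the Poincar\'e dual of $[B]$ to vanish. Working mod two removes any need for orientability of $B\cong\#_3\mathbb{RP}^2$ or its normal bundle.

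\emph{Step 1: the cover class.} By Proposition~\ref{prop:input}, $q_X$ is a two-fold branched cover with branch surface $B$. Over $Y\setminus B$ it is therefore a principal $\mathbb Z/2$-cover. Let
\[
 w\in H^1(Y\setminus B;\mathbb Z/2)
\]
be its classifying class. In the local model $(x,z)\mapsto(x,z^2)$ of Section~2, a small meridian circle $\mu$ of $B$ (a loop $|z|=\epsilon$ in a normal disc) lifts to a path joining the two sheets. Hence $\langle w,\mu\rangle=1$ for every meridian.

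\emph{Step 2: the coboundary of $w$.} Take a closed tubular neighborhood $\nu B$, a $D^2$-bundle over $B$, and replace $Y\setminus B$ by the homotopy-equivalent complement $Y\setminus\operatorname{int}\nu B$. Excision and the mod-two Thom isomorphism, which holds for any disc bundle, give
\[
 H^2(Y,Y\setminus B;\mathbb Z/2)\cong H^2(\nu B,\partial\nu B;\mathbb Z/2)\cong H^0(B;\mathbb Z/2)=\mathbb Z/2,
\]
since $B$ is connected. Its generator is the Thom class $\tau_B$, and it is detected by restriction to a single normal disc pair $(D^2,S^1)$. Naturality of the coboundary for the inclusion of a fiber pair gives $\delta w|_{(D^2,S^1)}=\delta(w|_{\mu})$. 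This is the generator of $H^2(D^2,S^1;\mathbb Z/2)$ by Step~1. Hence $\delta w=\tau_B$.

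\emph{Step 3: exactness.} In the exact sequence
\[
 H^1(Y\setminus B)\xrightarrow{\delta}H^2(Y,Y\setminus B)\xrightarrow{}H^2(Y),
\]
the image of $\tau_B$ in $H^2(Y;\mathbb Z/2)$ is the mod-two Poincar\'e dual of $[B]$. This is the standard identification, valid with $\mathbb Z/2$ coefficients without orientation hypotheses. Because $\tau_B=\delta w$, exactness makes this image zero, so $[B]=0$ in $H_2(Y;\mathbb Z/2)$.

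\emph{Consequence.} For a closed surface $F$ transverse to $B$, the number of intersection points modulo two equals $\langle \operatorname{PD}[B],[F]\rangle$ computed with $\mathbb Z/2$ coefficients. This applies even if $F$ is nonorientable, using its mod-two fundamental class. By the above it vanishes.

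The main point requiring care is Step~2: identifying the relative group with the Thom space of a possibly nonorientable normal bundle, and checking that $\delta w$ is the Thom class rather than zero. Both reduce to the meridian computation, and with $\mathbb Z/2$ coefficients the twisting of the normal bundle plays no role.
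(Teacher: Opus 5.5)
Your argument is correct, but it takes a different route from the paper's.

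\textbf{The paper's route.} The paper argues numerically. Viro's branched-cover signature formula $\sigma(X)=2\sigma(Y)-\frac12 e_N(B)$, with $\sigma(X)=\sigma(Y)=1$, gives $e_N(B)=2$. The mod-two reduction of the twisted normal Euler number is the mod-two self-intersection, so $[B]^2=0$ in $\mathbb Z/2$. Finally, $H_2(Y;\mathbb Z/2)\cong\mathbb Z/2$ carries the form $\langle 1\rangle$ (from $Y\approx\CP^2\#(S^1\times S^3)$), and the only square-zero class of that form is $0$.

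\textbf{Your route.} You instead prove the classical fact that the branch locus of any double branched cover is mod-two nullhomologous. The classifying class $w$ of the unbranched cover over $Y\setminus B$ satisfies $\delta w=\tau_B$ by the meridian computation, and exactness then kills the image of $\tau_B$ in $H^2(Y;\mathbb Z/2)$.

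\textbf{Comparison.} Your argument is more general and more robust. It uses only the existence of the branched double cover and the nontrivial meridian monodromy. It needs no signature input and no sign convention for $e_N$. It also does not depend on $H_2(Y;\mathbb Z/2)$ being one-dimensional with odd form, so it would apply verbatim if $Y$ had larger $b_2$ or an even mod-two form.

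The paper's argument is shorter given the quoted topological input. It also produces the value $e_N(B)=2$, which is reused later in the Euler-characteristic and signature computation for $\widetilde P$. In exchange, it relies on the specific rank-one, odd structure of the mod-two intersection form of $Y$.
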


\begin{proof}
The normal Euler number $e_N(B)$ is the evaluation of the twisted Euler
class of the normal bundle of the nonorientable surface $B$ on its twisted
fundamental class.  Its reduction modulo two is the mod-two self-intersection
of $B$: this is the standard identity between the mod-two reduction of the
twisted Euler class of a real rank-two bundle and the top
Stiefel--Whitney class of that bundle.  The universal coefficient theorem and Proposition~\ref{prop:input} give
$H_2(Y;\mathbb Z/2)\cong\mathbb Z/2$.  The reduction of the integral
positive generator has square one, so the mod-two intersection form is
$\langle1\rangle$.  With the normal-Euler convention used here, Viro's
branched-cover signature formula \cite{Viro} is
\[
 \sigma(X)=2\sigma(Y)-\frac12e_N(B).
\]
Thus $e_N(B)=2$.  The mod-two self-intersection of $B$ is the reduction of
its normal Euler number, so $[B]^2=0$.  The only square-zero element of the
one-dimensional form $\langle1\rangle$ is zero.
\end{proof}

\begin{theorem}
\label{thm:spherebranch}
Suppose that the positive generator is represented by a smooth $+1$-sphere
$E\subset Y$.  After making $E$ transverse to $B$, put
$r=|E\cap B|$.  Then
\[
 r\geq6.
\]
\end{theorem}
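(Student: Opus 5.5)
We lift the sphere to $X$ and bound the genus of the lift from below. Put $\widehat E=q_X^{-1}(E)$. Since $E$ is transverse to $B$, the local model in Lemma~\ref{lem:branchedthom} shows that $\widehat E$ is a smooth closed oriented surface and that $\widehat E\to E$ is a double cover branched at the $r$ points of $E\cap B$. Proposition~\ref{prop:branchmodtwo} says $r$ is even. Riemann--Hurwitz then gives
\[
 \chi(\widehat E)=2\chi(E)-r=4-r .
\]
Lemma~\ref{lem:branchedthom} gives $[\widehat E]=A$ with $\operatorname{PD}(A)=q_X^*u=a$ and $A^2=2$.

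\emph{Connectedness.} If $r>0$ the branched double cover of $S^2$ is connected. If $r=0$, the cover $\widehat E\to E$ is unbranched over the simply connected sphere, so it splits as $E_1\sqcup E_2$ with $c(E_1)=E_2$. We rule this out homologically. The classes satisfy $[E_1]+[E_2]=A$, and $E_1\cdot E_2=0$ because the two sheets are disjoint. We also have $[E_i]^2=1$: the self-intersection of $E_i$ is computed locally from the normal bundle, which maps isomorphically to that of $E$. This gives $A^2=1+1=2$, so there is no immediate contradiction, and we must use Seiberg--Witten theory. Each $E_i$ would be a sphere of square $+1$ in the minimal surface of general type $X$. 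Since $b_2^+(X)=1$, we use the wall-crossing form of the sphere obstruction. Blowing up $b_2^+$-type arguments, or more simply the generalized adjunction inequality, says that for a basic class $K_X$ and an embedded sphere $\Sigma$ with $\Sigma^2\ge0$ in a minimal surface of general type the class $[\Sigma]$ must be torsion; this applies to the $b_2^+=1$ chamber because $K_X^2=9>0$ fixes the chamber. A sphere of square one is not torsion, so this case is impossible.

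\emph{Genus bound.} So $\widehat E$ is connected of genus $g=(r-2)/2$, and representing $A$ with small genus is what we must exclude. The adjunction inequality for the basic class $\pm K_X$ gives $2g-2\ge A^2+|K_X\cdot A|\ge 2$ once $g\ge1$, hence $g\ge2$. For $g=0$, the same sphere obstruction as above excludes a sphere with $A^2=2>0$. For $g=1$, the adjunction inequality in the form $2g-2\ge A^2+|K\cdot A|$ would give $0\ge2$, a contradiction. Here we must check that the inequality holds for tori of positive square in the $b_2^+=1$ setting. This follows from Ozsváth--Szabó's adjunction inequality for surfaces of positive square in symplectic manifolds, since $X$ is Kähler and $[\omega]\cdot A$ can be arranged by sign choice. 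Hence $g\ge2$, so $r=2g+2\ge6$.

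The main obstacle is justifying the Seiberg--Witten inputs with $b_2^+(X)=1$. These are the sphere obstruction and the adjunction inequality in the correct chamber, including the sign of $[\omega]\cdot A$, which may require replacing $A$ by $-A$. I would handle this by citing the symplectic adjunction inequality for positive-square surfaces, which holds for symplectic four-manifolds with $b_2^+=1$ that are not rational or ruled. $X$ qualifies since it is of general type.
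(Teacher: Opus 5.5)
Your architecture is the paper's. You use parity of $r$ from $[B]=0$ modulo two. For $r=0$ the lift splits into two disjoint $+1$-spheres. For $r>0$ the lift is connected of genus $r/2-1$ and represents $A$ with $A^2=2$. Genus $0$ and $1$ are then excluded by the Seiberg--Witten sphere obstruction and the adjunction inequality.

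The gap is in the Seiberg--Witten step, which rests on the false premise $b_2^+(X)=1$. For the Cartwright--Steger surface $p_g=q=1$, so $b_2^+(X)=2p_g+1=3$. Equivalently, $e(X)=3$, $\sigma(X)=1$, $b_1(X)=2$ give $b_2(X)=5$ and $(b_2^+,b_2^-)=(3,2)$. Your inputs are the ``wall-crossing form'' of the sphere obstruction, the remark that $K_X^2>0$ fixes the chamber, and a symplectic adjunction inequality for $b_2^+=1$ manifolds that are not rational or ruled. All of these are stated under a hypothesis $X$ does not satisfy, and none is cited precisely, so as written they do not justify excluding $r=0,2,4$. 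Even if $b_2^+$ were one, your proposed fix of arranging the sign of $[\omega]\cdot A$ would be delicate. The form $\omega_-=(\omega-c^*\omega)/2$ is K\"ahler with $[\omega_-]\cdot A=0$. A one-sided inequality $2g-2\ge A^2+K_X\cdot A$ would also need $K_X\cdot A\ge 0$, which you never prove; it follows from $c^*K_X=-K_X$ and $c_*A=A$.

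The repair is to use the $b_2^+>1$ statements, as the paper does. The canonical class $K_X$ has a nonzero, chamber-independent degree-zero invariant. Fintushel--Stern's lemma says an essential embedded square-zero sphere forces all invariants to vanish when $b_2^+>1$. It extends to positive square by blowing up points on the sphere and using the blow-up formula. Alternatively, add a trivial handle to a positive-square sphere to get a torus in the same class and apply adjunction. Either way this excludes $r=0$ (two spheres of square $1$) and $r=2$ (a sphere of square $2$). The two-sided adjunction inequality $2g-2\ge A^2+|K_X\cdot A|$, for non-torsion classes of nonnegative square and $g>0$, then excludes $r=4$. With the absolute value you do not need $K_X\cdot A=0$. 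Also note that $b_1(X)=2>0$; the paper checks that both tools remain valid there, citing the positive-$b_1$ adjunction inequality of Ozsv\'ath--Szab\'o (Theorem~1.3). With these substitutions your argument goes through and coincides with the paper's.
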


\begin{proof}
Proposition~\ref{prop:branchmodtwo} shows that $r$ is even.  If $r=0$, the
restricted unbranched double cover of the simply connected sphere is
trivial.  Thus $q_X^{-1}(E)=E_1\sqcup E_2$, and each restriction
$q_X|_{E_i}:E_i\to E$ is a diffeomorphism.  The normal bundle of $E_i$ is the
pullback of the normal bundle of $E$, so $E_i^2=E^2=1$.  In particular each
$E_i$ is an essential sphere of nonnegative square in $X$.

Suppose that $r>0$.  A meridian of any branch point has nontrivial monodromy,
so the two sheets belong to one component.  The local model in
Lemma~\ref{lem:branchedthom} shows that
$\widehat E=q_X^{-1}(E)$ is a connected embedded oriented surface and that
\[
 g(\widehat E)=\frac r2-1,\qquad
 [\widehat E]^2=2.
\]
The Cartwright--Steger surface is a minimal K\"ahler surface of general type
with $b_2^+(X)=3$.  In the convention used here $K_X$ is a basic class with
nonzero degree-zero Seiberg--Witten invariant; changing conventions replaces
it by $-K_X$ and does not affect the argument \cite{TaubesSW}.  Fintushel and Stern
\cite[Lemma~5.1]{FSSphere} show that an essential embedded sphere of square
zero in a four-manifold with $b_2^+>1$ forces all Seiberg--Witten invariants
to vanish.  Their argument uses only a nonzero chamber-independent
degree-zero invariant and $b_2^+>1$, and therefore applies when $b_1>0$, in
particular to $b_1(X)=2$.  The same obstruction applies to a sphere of positive square
$n$: blow up $n$ distinct points on the sphere.  Its proper transform is an
essential square-zero sphere, while $b_2^+$ is unchanged.  By the
Seiberg--Witten blow-up formula \cite[Theorem~14.1.1]{GS}, the classes
$K_X\pm E_1\pm\cdots\pm E_n$ are basic and retain the nonzero degree-zero
invariant.  Thus a nonzero basic class of the original manifold gives a
nonzero basic class of the blow-up,
contradicting \cite[Lemma~5.1]{FSSphere}.  Here $b_2^+(X)=3>1$, and each
spherical class under consideration is non-torsion because its square is
$1$ when $r=0$ and $2$ when $r=2$.  Thus the nonzero canonical invariant
gives a contradiction, ruling out $r=0$ and $r=2$.

For the canonical basic class the expected dimension is zero, since
\[
 d(K_X)=\frac{K_X^2-(2e(X)+3\sigma(X))}{4}
       =\frac{9-(6+3)}4=0.
\]
Since $b_1(X)=2$, we use the positive-$b_1$ adjunction inequality
\cite[Theorem~1.3]{OSAdj}, rather than Theorem~1.1 of that paper.  Take
$\ell=0$ and the degree-zero algebra insertions $a=b=1$.  The canonical class has
expected dimension zero and $SW_{X,K_X}(1)\ne0$, so its degree-zero case
applies to a connected embedded oriented surface of positive genus
representing a non-torsion class of nonnegative square.

For $r\geq4$, the adjunction inequality
\cite[Theorem~1.3]{OSAdj} gives
\[
 2g(\widehat E)-2\geq [\widehat E]^2+
             |K_X\cdot[\widehat E]|.
\]
The class $[\widehat E]$ is invariant under $c_*$.  Since $c$ is
antiholomorphic, its differential identifies $c^*T^{1,0}X$ with
$T^{0,1}X$.  Hence
$c^*c_1(K_X)=-c_1(K_X)$.  Thus the notation $c^*K_X=-K_X$ means equality
of cohomology classes, not an isomorphism of holomorphic line bundles.
Since $c$ preserves the real orientation of \(X\), it
preserves the intersection pairing, and hence
$K_X\cdot[\widehat E]=0$.  Substitution gives
\[
 r-4\geq2,
\]
so $r\geq6$.
\end{proof}

Put
\[
 a=q_X^*u\in H^2(X;\mathbb Z),\qquad
 A=\operatorname{PD}_X^{-1}(a)\in H_2(X;\mathbb Z).
\]

\begin{theorem}
\label{thm:equivariantsurface}
The class $A$ is invariant, $A^2=2$, and
\[
 H_2(X;\mathbb Q)^{c_*}=\mathbb Q A,\qquad K_X\cdot A=0.
\]
Every connected embedded oriented representative of $A$ has genus at least
two.  Moreover, $a$ is not of type $(1,1)$, so $A$ is not the homology class
of a complex divisor.

A smooth $+1$-sphere in $Y$, transverse to $B$, lifts to a setwise
$c$-invariant representative of $A$.  Conversely, let
$\Sigma\subset X$ be a setwise $c$-invariant connected embedded oriented
surface representing $A$.  Assume that $\Sigma$ is transverse to $X^c$.
Then $\Sigma/c$ is a smoothly embedded oriented surface
representing the positive generator of $H_2(Y;\mathbb Z)$ and has square
one.  It is a $+1$-sphere exactly when $\Sigma/c\cong S^2$.  If
$g(\Sigma)=2$, the quotient genus and number of fixed points are either
$(0,6)$ or $(1,2)$.
\end{theorem}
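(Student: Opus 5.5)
I will prove the cohomological assertions first, then the genus bound, then the $(1,1)$ exclusion, and finally the lifting and quotient statements.

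\emph{Invariance and the square.} Since $c^*q_X^*=q_X^*$, the class $a$ is $c^*$-invariant. Because $c$ preserves orientation, Poincar\'e duality commutes with $c$, so $A$ is $c_*$-invariant. The square is
\[
 a^2[X]=u^2\,q_{X*}[X]=2u^2[Y]=2,
\]
which is the computation already used in Lemma~\ref{lem:branchedthom}.

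\emph{The invariant subspace.} Lemma~\ref{lem:rationalquotient} identifies $H^2(X;\mathbb Q)^{c^*}$ with $H^2(Y;\mathbb Q)$. This space has dimension $b_2(Y)=1$ by Proposition~\ref{prop:input}. Since $a=q_X^*u\neq0$, dualizing gives $H_2(X;\mathbb Q)^{c_*}=\mathbb Q A$.

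\emph{Vanishing of $K_X\cdot A$.} In the proof of Theorem~\ref{thm:spherebranch} we saw that $c^*c_1(K_X)=-c_1(K_X)$ and that $c$ preserves the pairing. Hence
\[
 K_X\cdot A=K_X\cdot c_*A=-K_X\cdot A,
\]
so $K_X\cdot A=0$.

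\emph{Genus bound.} Let $\Sigma$ be a connected embedded oriented representative of $A$. If $\Sigma$ is a sphere, it is an essential sphere of positive square. The blow-up argument in the proof of Theorem~\ref{thm:spherebranch} then contradicts the nonvanishing Seiberg--Witten invariant of $K_X$. If $g(\Sigma)\ge1$, the positive-$b_1$ adjunction inequality \cite[Theorem~1.3]{OSAdj} gives
\[
 2g-2\ge A^2+|K_X\cdot A|=2,
\]
so $g\ge2$.

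\emph{Excluding type $(1,1)$.} This is the step I expect to need the most care. Suppose $a$ were of type $(1,1)$. Then $a$ is an integral $(1,1)$ class with $a^2=2>0$ and $a\cdot K_X=0$. Since $K_X$ is ample on a ball quotient, $K_X^2=9>0$. The Hodge index theorem applied on $\operatorname{span}(a,K_X)$ requires the Gram determinant
\[
 a^2K_X^2-(a\cdot K_X)^2=18
\]
to be $\le0$ for two independent $(1,1)$ classes. This is a contradiction, unless $a$ is proportional to $K_X$. Proportionality is impossible because $a\cdot K_X=0$ while $K_X^2\ne0$. The only point to double-check is the form of the signature statement: on $H^{1,1}_{\mathbb R}$ the signature is $(1,h^{1,1}-1)$, and this is all the argument uses. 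Since a complex divisor has Poincar\'e dual of type $(1,1)$, $A$ is not the class of a divisor.

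\emph{Lifting a $+1$-sphere.} Let $E$ be a smooth $+1$-sphere transverse to $B$. Its preimage $q_X^{-1}(E)$ is $c$-invariant by construction. By Lemma~\ref{lem:branchedthom} its Poincar\'e dual is $q_X^*u=a$, so it represents $A$.

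\emph{The converse.} Let $\Sigma$ be a setwise $c$-invariant connected surface representing $A$, transverse to $X^c$. Transversality forces $c|_\Sigma$ to have isolated fixed points, since the fixed set of $c$ is $2$-dimensional and meets $\Sigma$ transversally. In the local model of Lemma~\ref{lem:branchedthom}, $c|_\Sigma$ acts as a rotation by $\pi$ and preserves the orientation of $\Sigma$. It cannot reverse the orientation, because $c_*A=A\neq -A$ rationally. Therefore $\Sigma/c$ is a smooth oriented surface, embedded in $Y$ by the local coordinates $(x,z)\mapsto(x,z^2)$.

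Now $\Sigma=q_X^{-1}(\Sigma/c)$, so Lemma~\ref{lem:branchedthom} gives
\[
 q_X^*\operatorname{PD}[\Sigma/c]=a=q_X^*u.
\]
Since $q_X^*$ is injective rationally and $H^2(Y;\mathbb Z)\cong\mathbb Z$ is torsion free, $[\Sigma/c]$ is the positive generator. Its square is $A^2/2=1$. The sphere criterion is then tautological.

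\emph{Quotient genus and fixed points.} Riemann--Hurwitz for the $r$ fixed points gives
\[
 2-2g(\Sigma)=2\bigl(2-2h\bigr)-r,
\]
where $h$ is the genus of $\Sigma/c$. For $g(\Sigma)=2$ this reads $r=6-4h$. The number $r$ is even by Proposition~\ref{prop:branchmodtwo} and nonnegative, so the possibilities are $(h,r)=(0,6)$ or $(1,2)$.
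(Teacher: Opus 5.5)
Your proof is correct and follows the paper's argument step for step, with one genuine difference: the exclusion of type $(1,1)$.

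\textbf{The $(1,1)$ step.} The paper averages a K\"ahler form into the anti-invariant K\"ahler form $\omega_-=(\omega-c^*\omega)/2$. It then notes that the invariant class $A$ pairs to zero with $[\omega_-]$, and applies the Hodge index theorem on the orthogonal complement of a K\"ahler class to get $a^2<0$. You instead use $c_1(K_X)$, a $(1,1)$-class of square $9$ already shown to be orthogonal to $A$. If $a$ were of type $(1,1)$, then $a$ and $K_X$ would span a positive definite plane in $H^{1,1}_{\mathbb R}$, which has only one positive direction.

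Both arguments use the same mechanism: an anti-invariant $(1,1)$-class of positive square is automatically orthogonal to every invariant class. Your version reuses $K_X\cdot A=0$ and needs $K_X^2>0$. The paper's version uses only that $c$ is antiholomorphic. It therefore shows more generally that on a compact K\"ahler surface with a real structure, every nonzero invariant real $(1,1)$-class has negative square.

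\textbf{Minor points.}
\begin{itemize}
\item Your Gram-determinant criterion holds only when one of the two classes has positive square; two orthogonal classes of negative square have positive determinant. The hypothesis is satisfied here, so the conclusion stands.
\item In the converse, you should say explicitly that at a point of $\Sigma\cap X^c$ one can straighten $\Sigma$ and $X^c$ simultaneously in equivariant coordinates $c(x,z)=(x,-z)$. To do this, write $\Sigma$ as a graph $x=h(z)$ with $h$ even and replace $x$ by $x-h(z)$. This is needed because the model of Lemma~\ref{lem:branchedthom} is set up downstairs and already presupposes that $\Sigma/c$ is a smooth surface transverse to $B$.
\item The appeal to Proposition~\ref{prop:branchmodtwo} for the parity of $r$ is harmless but unnecessary, since $r=6-4h$ is automatically even.
\end{itemize}
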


\begin{proof}
Lemma~\ref{lem:rationalquotient} and $b_2(Y)=1$ give
\[
 H^2(X;\mathbb Q)^{c^*}=\mathbb Q a,
\]
Since $c$ preserves the real orientation of $X$, Poincar\'e duality is
$c$-equivariant and gives the corresponding assertion in homology.
Lemma~\ref{lem:branchedthom} gives $A^2=2$.  Antiholomorphicity makes
$K_X$ anti-invariant, so $K_X\cdot A=0$.  A genus-zero representative is
excluded by the Seiberg--Witten sphere obstruction used in the proof of
Theorem~\ref{thm:spherebranch}; its class is non-torsion because $A^2=2$.
For positive genus, adjunction
gives
\[
 2g-2\geq A^2+|K_X\cdot A|=2,
\]
and therefore $g\geq2$.

Starting with a K\"ahler form $\omega$, put
\[
 \omega_-=\frac{\omega-c^*\omega}{2}.
\]
Both $\omega$ and $-c^*\omega$ are K\"ahler, so $\omega_-$ is K\"ahler and
$c^*\omega_-=-\omega_-$.  Since $A$ is invariant,
$A\cdot[\omega_-]=0$.  If $a$ were a real $(1,1)$-class, the Hodge index
theorem on the orthogonal complement of the K\"ahler class would give
$a^2<0$, contrary to $a^2=2$.

The restriction of $c$ to $\Sigma$ is orientation preserving: otherwise it
would send the non-torsion class $A$ to $-A$, although $c_*A=A$.  Thus the
transversality hypothesis gives
$T_p\Sigma\cap T_pX^c=0$ at a fixed point.  Since $T_p\Sigma$ is
$dc$-invariant, $dc|_{T_p\Sigma}=-\operatorname{id}$.  Equivariant
linearization and straightening give local coordinates
$(x,z)\in\mathbb R^2\times\mathbb C$ in which
\[
 c(x,z)=(x,-z),\qquad X^c=\{z=0\},\qquad\Sigma=\{x=0\}.
\]
In quotient coordinates $(x,Z)=(x,z^2)$, the images of $X^c$ and $\Sigma$
are $\{Z=0\}$ and $\{x=0\}$.  Hence $F=\Sigma/c$ is smooth, embedded,
oriented, and transverse to $B$, with $q_X^{-1}(F)=\Sigma$.  The branched
Thom-class identity gives
\[
 q_X^*\operatorname{PD}_Y[\Sigma/c]=a=q_X^*u.
\]
Write $\operatorname{PD}_Y[\Sigma/c]=n\,u$, since
$H^2(Y;\mathbb Z)\cong\mathbb Z u$.  Then $(n-1)a=0$; because $a^2=2$, the
class $a$ is non-torsion, so $n=1$.  Hence the quotient represents the
positive generator and has square one.
For genus two, Riemann--Hurwitz gives
\[
 -2=2(2-2h)-k,\qquad k=6-4h,
\]
where $h$ is the quotient genus and $k$ is the number of fixed points.
Thus $(h,k)=(0,6)$ or $(1,2)$.
\end{proof}

\begin{proof}[Proof of Theorem~\ref{thm:branch}]
Proposition~\ref{prop:branchmodtwo} proves the mod-two vanishing of the
branch class.  Theorem~\ref{thm:spherebranch} gives the six-point lower
bound.  The remaining assertions are contained in
Theorem~\ref{thm:equivariantsurface}.
\end{proof}

\section{Blowing down a positive sphere}

Suppose that the positive generator is represented by a smoothly embedded
sphere $E\subset Y$ with $E^2=1$.  Define
\[
 Q_E=\bigl(Y\setminus\operatorname{int}\nu E\bigr)\cup_{S^3}D^4.
\]

\begin{proposition}\label{prop:blowdown}
The manifold $Q_E$ has the integral homology of $S^1\times S^3$ and
\[
 \pi_1(Q_E)\cong\mathbb Z.
\]
Moreover,
\[
 Y\cong Q_E\#\CP^2.
\]
\end{proposition}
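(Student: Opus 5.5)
The plan is to identify the pieces of the decomposition and then compute $\pi_1$ and homology by Seifert--van Kampen and Mayer--Vietoris. Since $E^2=1$, the closed tubular neighborhood $\nu E$ is the $D^2$-bundle of Euler number one over $S^2$, and this bundle is diffeomorphic to $\CP^2\setminus\operatorname{int}D^4$. Its boundary is $S^3$, oriented as the boundary of $\CP^2$ minus a ball. Therefore
\[
 Y=\bigl(Y\setminus\operatorname{int}\nu E\bigr)\cup_{S^3}\bigl(\CP^2\setminus\operatorname{int}D^4\bigr).
\]
Every orientation-preserving diffeomorphism of $S^3$ is isotopic to the identity (Cerf), so the gluing map does not matter. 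Gluing in $D^4$ instead gives $Q_E$, and the displayed decomposition is by definition $Q_E\#\CP^2$. This proves $Y\cong Q_E\#\CP^2$, provided the orientations are kept consistent: the $\CP^2$ summand must carry its standard orientation because $E^2=+1$.

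For the fundamental group, write $C=Y\setminus\operatorname{int}\nu E$.
\begin{enumerate}
\item Gluing a $4$-ball along $S^3$ does not change $\pi_1$, since $S^3$ is simply connected. Hence $\pi_1(Q_E)\cong\pi_1(C)$.
\item By the same van Kampen argument applied to $Y=C\cup_{S^3}\nu E$, with $\nu E\simeq S^2$ simply connected, $\pi_1(Y)\cong\pi_1(C)*1=\pi_1(C)$.
\item Proposition~\ref{prop:input} then gives $\pi_1(Q_E)\cong\mathbb Z$.
\end{enumerate}

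For homology, use the Mayer--Vietoris sequence for the connected sum, or equivalently the formula $H_i(M\#N)\cong H_i(M)\oplus H_i(N)$ for $0<i<4$ with closed oriented summands. This gives $H_*(Y)\cong H_*(Q_E)\oplus\widetilde H_*(\CP^2)$ in degrees $1,2,3$. We know $H_1(Y)\cong\mathbb Z$, $H_3(Y)\cong\mathbb Z$ by Poincar\'e duality, and $H_2(Y)\cong\mathbb Z$, generated by $[E]$. The $\CP^2$ summand accounts for exactly this $H_2$ class, so $H_2(Q_E)=0$, $H_1(Q_E)\cong\mathbb Z$ and $H_3(Q_E)\cong\mathbb Z$. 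These are the integral homology groups of $S^1\times S^3$.

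The one point needing care is the $H_2$ step. We need that the image of $H_2(\nu E)$ is exactly $H_2(Y)\cong\mathbb Z$, that is, that $[E]$ generates. This holds because $E$ represents the positive generator by hypothesis. We also need that no torsion is hidden in $H_2(Q_E)$; this is automatic from the direct sum splitting, since $H_2(Y)\cong\mathbb Z$ is torsion free. I expect the orientation bookkeeping in the identification $\nu E\cong\CP^2\setminus D^4$ to be the only genuinely delicate step.
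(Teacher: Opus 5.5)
Your proof is correct and uses the same ingredients as the paper: Cerf's theorem for the regluing, van Kampen for $\pi_1$, and Mayer--Vietoris for homology. The only difference is the order. The paper runs Mayer--Vietoris directly on $Y=W\cup\nu E$ and then on $Q_E=W\cup D^4$, while you first establish $Y\cong Q_E\#\CP^2$ and then read off homology from additivity under connected sum. In your ordering, the $H_2$ step you flag as delicate is automatic: $\mathbb Z\cong H_2(Q_E)\oplus\mathbb Z$ with $H_2(Q_E)$ finitely generated already forces $H_2(Q_E)=0$.
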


\begin{proof}
The Euler class of $\nu E$ is \(1\), so $\partial\nu E=S^3$.  Since $[E]$
is the primitive generator and $E^2=1$, the map
\[
 H_2(\nu E;\mathbb Z)\longrightarrow H_2(Y;\mathbb Z)
\]
is an isomorphism.  The Mayer--Vietoris sequence gives
\[
 H_*(Q_E;\mathbb Z)\cong H_*(S^1\times S^3;\mathbb Z).
\]
Indeed, for $Y=(Y\setminus\operatorname{int}\nu E)\cup\nu E$, the map
$H_2(\nu E)\to H_2(Y)$ is an isomorphism, while the oriented boundary map
$H_4(Y)\to H_3(S^3)$ is an isomorphism.  Hence, if
$W=Y\setminus\operatorname{int}\nu E$, then
\[
 H_2(W)=0,\qquad H_1(W)\cong\mathbb Z,\qquad H_3(W)\cong\mathbb Z.
\]
The Mayer--Vietoris sequence for $Q_E=W\cup D^4$ gives the displayed
homology groups.
Van Kampen gives $\pi_1(Q_E)\cong\pi_1(Y)\cong\mathbb Z$, because the common
boundary, the disk bundle, and the replacing four-ball are simply connected.
Reversing the blowdown recovers \(Y\).  The gluing map of the boundary
three-sphere extends over the four-ball by Cerf's theorem \cite{Cerf}, so
$Y\cong Q_E\#\CP^2$.
\end{proof}

To compute the homology of the universal cover, write
\(\Lambda=\mathbb Z[t,t^{-1}]\), with involution
$\overline t=t^{-1}$.

\begin{theorem}\label{thm:ZZhomology}
For the universal cover $\widetilde Q_E$ one has
\[
 H_0(\widetilde Q_E)\cong H_3(\widetilde Q_E)
       \cong\Lambda/(t-1),\qquad
 H_i(\widetilde Q_E)=0\quad(i\ne0,3).
\]
Consequently,
\[
 Q_E\simeq S^1\times S^3.
\]
\end{theorem}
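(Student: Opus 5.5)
We work with the infinite cyclic cover. Since $\pi_1(Q_E)\cong\mathbb Z$ by Proposition~\ref{prop:blowdown}, the universal cover $\widetilde Q_E$ is the $\mathbb Z$-cover, and its homology groups are $\Lambda$-modules. The plan is to compare $\widetilde Q_E$ with the universal cover $\widetilde Y$ through the decomposition $Y=W\cup\nu E$ with $W=Y\setminus\operatorname{int}\nu E$.

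Because $\nu E$, $\partial\nu E=S^3$ and $D^4$ are simply connected, each lifts to a disjoint union of copies indexed by $\mathbb Z$. Thus
\[
\widetilde Y=\widetilde W\cup(\Lambda\otimes\nu E),\qquad
\widetilde Q_E=\widetilde W\cup(\Lambda\otimes D^4).
\]
Proposition~\ref{prop:input} gives $H_2(\widetilde Y)=H_2(Y;\Lambda)\cong\Lambda$ with equivariant form $\langle1\rangle$. The equivariant Mayer--Vietoris sequence for $\widetilde Y$ contains the map $H_2(\Lambda\otimes\nu E)=\Lambda\to H_2(\widetilde Y)$. This map sends $1$ to a lift $\widetilde E$, and $\widetilde E$ has equivariant self-intersection $1$, since $E^2=1$ and $E$ is simply connected, so its lifts are disjoint embedded translates. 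A class of square $1$ in the rank-one form $\langle1\rangle$ over $\Lambda$ is a generator up to a unit $\pm t^k$. Hence the map is an isomorphism of $\Lambda$-modules. This is the key step, and I expect it to be the main obstacle: one has to justify carefully that $\lambda(\widetilde E,\widetilde E)=1$ forces $[\widetilde E]$ to be a unit multiple of the generator. The point is that if $[\widetilde E]=p\cdot g$, then $p\overline p=1$ in $\Lambda$, which forces $p=\pm t^k$.

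Next, $\widetilde Y$ is noncompact, so $H_4(\widetilde Y)=0$, while $H_3(\Lambda\otimes S^3)=\Lambda$ and $H_1(\widetilde Y)=0$. For $H_3(\widetilde Y)$ we use equivariant Poincar\'e duality with the universal coefficient spectral sequence. Alternatively, since $Y\approx\CP^2\#(S^1\times S^3)$ is a homotopy invariant statement, we may read off $H_3(\widetilde Y)\cong\Lambda/(t-1)\cong\mathbb Z$ directly from the model $\widetilde{S^1\times S^3}\#_{\mathbb Z}\CP^2$. The Mayer--Vietoris sequence then gives
\[
H_2(\widetilde W)=0,\qquad H_1(\widetilde W)=0,\qquad H_3(\widetilde W)\cong H_3(\widetilde Y)\oplus\Lambda .
\]
The precise splitting will need a short exactness check.

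Regluing $\Lambda\otimes D^4$ kills the $\Lambda$ summand coming from the boundary spheres. The Mayer--Vietoris sequence for $\widetilde Q_E$ then gives $H_i(\widetilde Q_E)=0$ for $i=1,2$ and $i\ge4$, together with $H_3(\widetilde Q_E)\cong H_3(\widetilde Y)\cong\Lambda/(t-1)$, and $H_0=\Lambda/(t-1)$ trivially.

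To conclude $Q_E\simeq S^1\times S^3$, map $Q_E\to S^1$ inducing the $\pi_1$ isomorphism. By Hurewicz, the simply connected space $\widetilde Q_E$ has $\pi_3\cong H_3\cong\mathbb Z$, and a generator gives a map $S^3\to\widetilde Q_E$ that is a homology isomorphism, hence a homotopy equivalence by Whitehead. Then $Q_E$ is a $K(\mathbb Z,1)$-fibration with fiber $\simeq S^3$, trivial over $S^1$ because the monodromy acts trivially on $H_3$: this is the $\Lambda/(t-1)$ structure. This yields a map $S^1\times S^3\to Q_E$ that is an isomorphism on $\pi_1$ and on universal-cover homology, hence a homotopy equivalence. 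Orientation is matched by choosing the sign of the $S^3$ generator.
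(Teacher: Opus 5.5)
Your proof is essentially the paper's. Where the paper uses the long exact sequences of the pairs $(\widetilde Y,A)$ and $(\widetilde Q_E,A)$ with $A=\widetilde W$, you use Mayer--Vietoris for $\widetilde Y=\widetilde W\cup\bigsqcup_{\mathbb Z}\nu E$ and $\widetilde Q_E=\widetilde W\cup\bigsqcup_{\mathbb Z}D^4$. The remaining ingredients are the same: the unit lemma $f\overline f=1\Rightarrow f=\pm t^k$, the standard model giving $H_3(\widetilde Y)\cong\Lambda/(t-1)$, and the Hurewicz--Whitehead plus trivial-monodromy finish.

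One intermediate claim is false, though it does not affect the conclusion. $H_3(\widetilde W)$ is not $H_3(\widetilde Y)\oplus\Lambda$. Mayer--Vietoris only gives an extension
\[
0\longrightarrow H_3\bigl(\bigsqcup_{\mathbb Z}S^3\bigr)\cong\Lambda\longrightarrow H_3(\widetilde W)\longrightarrow H_3(\widetilde Y)\longrightarrow0,
\]
and this extension need not split. Take $\CP^1\subset\CP^2\#(S^1\times S^3)$. Then $W\cong(S^1\times S^3)\setminus\operatorname{int}D^4\simeq S^1\vee S^3$, so $H_3(\widetilde W)\cong\Lambda$, and the sequence is $0\to\Lambda\xrightarrow{t-1}\Lambda\to\Lambda/(t-1)\to0$. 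Since $\Lambda$ is torsion-free, it is not isomorphic to $\Lambda\oplus\Lambda/(t-1)$.

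You do not need the splitting. The lifted disk bundles and the lifted four-balls both have vanishing $H_3$, and $H_2(\bigsqcup_{\mathbb Z}S^3)=0$. Hence both sequences identify $H_3(\widetilde Y)$ and $H_3(\widetilde Q_E)$, $\Lambda$-linearly, with the cokernel of the same inclusion-induced map $H_3(\bigsqcup_{\mathbb Z}S^3)\to H_3(\widetilde W)$. That is the correct form of ``regluing kills the boundary spheres''. It is slightly cleaner than the paper, which has to compare the connecting maps $\delta_Y,\delta_Q$ and track a boundary-orientation sign.

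The step you flagged as the main obstacle is in fact automatic in your setup. Mayer--Vietoris makes $H_2(\widetilde W)\oplus\Lambda\to H_2(\widetilde Y)\cong\Lambda$ an isomorphism. Since $\Lambda$ is a domain, a summand complementary to a nonzero submodule must vanish, so $H_2(\widetilde W)=0$. Also, $H_1(\widetilde W)=0$ follows directly from $H_1(\bigsqcup_{\mathbb Z}S^3)=H_1(\widetilde Y)=0$.
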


\begin{proof}
Let \(p:\widetilde Y\to Y\) be the universal, equivalently infinite cyclic,
cover.  Put
\[
 W=Y\setminus\operatorname{int}\nu E,\qquad A=p^{-1}(W).
\]
Van Kampen applied to \(Y=W\cup\nu E\) along \(S^3\) gives
\(\pi_1(W)\cong\pi_1(Y)\), so \(A\) is connected.  Each component of
\(p^{-1}(\nu E)\) is the \(+1\) disk bundle \(N\to S^2\), and the components
form one free \(\mathbb Z\)-orbit.

By excision and Poincar\'e--Lefschetz duality,
\[
 H_i(\widetilde Y,A)\cong
 \begin{cases}
  \Lambda,&i=4,2,\\
  0,&\text{otherwise}.
 \end{cases}
\]
Proposition~\ref{prop:input} gives
\(H_2(\widetilde Y)\cong\Lambda\) with equivariant intersection form
\(\langle1\rangle\).  Choose a lift \(\widetilde E\).  If \(e\) is a basis
and \([\widetilde E]=f(t)e\), then disjointness of distinct translates and
\(\widetilde E^2=1\) give
\[
 f(t)\overline{f(t)}=1.
\]
With the convention
\[
 \lambda(x,y)=\sum_{k\in\mathbb Z}(x\cdot t^ky)t^k,
\]
the disjointness of distinct translates and $\widetilde E^2=1$ say exactly
that $\lambda([\widetilde E],[\widetilde E])=1$.  Comparing the largest and
smallest exponents in $f(t)\overline{f(t)}$ shows that $f$ has one nonzero
term, with coefficient $\pm1$.  Thus
\(f(t)=\pm t^k\).  Under the excision identification, the map
\[
 H_2(\widetilde Y)\longrightarrow H_2(\widetilde Y,A)
\]
is equivariant intersection with \([\widetilde E]\), and hence is
multiplication by a unit.  The long exact sequence of the pair contains
\[
 0\longrightarrow H_2(A)\longrightarrow H_2(\widetilde Y)
 \longrightarrow H_2(\widetilde Y,A)
 \longrightarrow H_1(A)\longrightarrow H_1(\widetilde Y)=0.
\]
Therefore
\[
 H_2(A)=H_1(A)=0.
\]

The universal cover \(\widetilde Q_E\) is obtained from \(A\) by attaching a
four-ball to every boundary three-sphere.  Excision gives
\[
 H_i(\widetilde Q_E,A)\cong
 \begin{cases}
  \Lambda,&i=4,\\
  0,&\text{otherwise}.
 \end{cases}
\]
Both \(\widetilde Y\) and \(\widetilde Q_E\) are connected noncompact
four-manifolds, so their fourth homology vanishes: an ordinary
top-dimensional cycle would determine a locally constant multiple of the
fundamental class and cannot have compact support unless the manifold is
compact.  The degree-three pair
sequences are therefore
\[
 0\longrightarrow\Lambda\xrightarrow{\delta_Y}H_3(A)
   \longrightarrow H_3(\widetilde Y)\longrightarrow0
\]
and
\[
 0\longrightarrow\Lambda\xrightarrow{\delta_Q}H_3(A)
   \longrightarrow H_3(\widetilde Q_E)\longrightarrow0.
\]
Choose one lift $\widetilde N$ of the disk bundle.  Excision identifies the
two copies of $\Lambda$ $\Lambda$-linearly using the relative
fundamental classes of the disk-bundle and four-ball components.  Orient a
chosen boundary sphere $\partial\widetilde N$ as the boundary of the lifted
disk bundle, and transport this orientation to every
$t^j\partial\widetilde N$.  Under the excision identifications, the relative
class $t^j$ for $(\widetilde Y,A)$ has boundary
$[t^j\partial\widetilde N]$, while the corresponding relative class for
$(\widetilde Q_E,A)$ has boundary
$-[t^j\partial\widetilde N]$, because the attached four-ball induces the
opposite boundary orientation.  A different initial choice of lift
multiplies both excision identifications and both boundary maps by the same
unit $t^m$.  Consequently
\[
 \operatorname{im}\delta_Q=\operatorname{im}\delta_Y
 \quad\text{in }H_3(A),
\]
equivalently the two maps differ only by multiplication by a unit
$\pm t^\ell\in\Lambda$.  Their cokernels are therefore equivariantly
isomorphic, and hence
\[
 H_3(\widetilde Q_E)\cong H_3(\widetilde Y).
\]
The homeomorphism
\(Y\approx\CP^2\#(S^1\times S^3)\) induces an automorphism of
$\pi_1\cong\mathbb Z$, necessarily $t\mapsto t^{\pm1}$.  It therefore
preserves the kernel defining the universal cover and lifts equivariantly,
after replacing $t$ by $t^{-1}$ when necessary.  The standard cover has
third homology \(\Lambda/(t-1)\): this follows by cutting along a separating
three-sphere and applying equivariant Mayer--Vietoris.  The pair sequence for
\((\widetilde Q_E,A)\), together with \(H_2(A)=H_1(A)=0\), gives
\(H_2(\widetilde Q_E)=H_1(\widetilde Q_E)=0\).  Thus
\[
 H_0(\widetilde Q_E)\cong H_3(\widetilde Q_E)
       \cong\Lambda/(t-1),
 \qquad H_i(\widetilde Q_E)=0\quad(i\ne0,3).
\]

The smooth manifolds and covers used here have the homotopy type of CW
complexes.  The space \(\widetilde Q_E\) is simply connected and has the
homology of \(S^3\).  The Hurewicz and Whitehead theorems
\cite[Theorems~4.32 and~4.5]{Hatcher} therefore give
\(\widetilde Q_E\simeq S^3\).  The deck generator acts trivially on
\(H_3\), so on an \(S^3\)-model it is a degree-one self-homotopy equivalence.
Since \([S^3,S^3]\) is classified by degree, this monodromy is homotopic to
the identity.  More formally, the free deck action gives a fibration
\[
 \widetilde Q_E\longrightarrow
 E\mathbb Z\times_{\mathbb Z}\widetilde Q_E\longrightarrow S^1.
\]
Projection to the orbit space $Q_E$ is a homotopy equivalence because its
fiber is the contractible space $E\mathbb Z$.  After choosing a homotopy
equivalence $\widetilde Q_E\simeq S^3$, the deck generator induces a
self-homotopy equivalence of $S^3$ of degree one, since it acts trivially on
$H_3(\widetilde Q_E)\cong\Lambda/(t-1)$.  The total space is therefore
homotopy equivalent to the mapping torus of the identity, namely
$S^1\times S^3$.
\end{proof}

\begin{proof}[Proof of Theorem~\ref{thm:blowdown}]
Proposition~\ref{prop:blowdown} gives the fundamental group and the connected
sum decomposition.  Theorem~\ref{thm:ZZhomology} gives the homology of
the infinite cyclic cover and the asserted homotopy equivalence.
\end{proof}

\begin{corollary}\label{cor:spheredichotomy}
If the positive generator has no smoothly embedded spherical representative,
then \(Y\not\cong\CP^2\#(S^1\times S^3)\).  If it is represented by a
smooth \(+1\)-sphere \(E\), then, for every such $E$,
\[
 Y\cong Q_E\#\CP^2,
\]
where \(Q_E\) is a \(\mathbb Z[\mathbb Z]\)-homology
\(S^1\times S^3\) and \(Q_E\simeq S^1\times S^3\).
\end{corollary}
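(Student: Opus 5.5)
The corollary has two halves, and I would prove them separately by combining Theorem~\ref{thm:blowdown}, Proposition~\ref{prop:blowdown}, Corollary~\ref{cor:standardquotient} and a direct homological construction.

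\emph{Second half.} Assume the positive generator is represented by a smooth $+1$-sphere $E$. Proposition~\ref{prop:blowdown} gives the diffeomorphism $Y\cong Q_E\#\CP^2$ and the integral homology of $Q_E$. Theorem~\ref{thm:ZZhomology} gives the homology of the universal cover. Since $\pi_1(Q_E)\cong\mathbb Z$, this cover is the infinite cyclic cover, so its homology is exactly the $\mathbb Z[\mathbb Z]$-homology. That homology is $\Lambda/(t-1)$ in degrees $0$ and $3$ and zero otherwise, which matches $S^1\times S^3$. The same theorem gives $Q_E\simeq S^1\times S^3$. Nothing in these arguments depended on a choice of $E$ beyond its being a smooth $+1$-sphere representing the generator, so the conclusion holds for every such $E$.

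\emph{First half.} I would prove the contrapositive: if $Y\cong\CP^2\#(S^1\times S^3)$, then the positive generator has a smooth spherical representative. Take the standard line $\CP^1\subset\CP^2$. Push it into the $\CP^2$ summand away from the connected-sum ball. It is a smooth sphere of square $+1$ in the standard manifold.

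By Mayer--Vietoris, $H_2(\CP^2\#(S^1\times S^3))\cong H_2(\CP^2)\oplus H_2(S^1\times S^3)=\mathbb Z$, and this group is generated by $[\CP^1]$. Transport the sphere through the diffeomorphism $\phi$. The image represents $\pm$ the generator of $H_2(Y;\mathbb Z)$, because $\phi_*$ is an isomorphism. Since $\phi$ preserves orientation, it preserves intersection numbers, so the image has square $+1$. If the image represents $-u$, reversing the orientation of the sphere gives $u$, and the square is unchanged.

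The only point needing care is the meaning of ``$\cong$''. The paper fixes $\cong$ to mean orientation-preserving diffeomorphism, so the transported sphere is smooth and its square is preserved. With that convention the argument is formal, and I expect no genuine obstacle.
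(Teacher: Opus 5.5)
Your proof is correct and follows the paper's own argument. For the first half you transport the complex line in the $\CP^2$ summand through the diffeomorphism; your sign and orientation bookkeeping only spells out what the paper leaves implicit. For the second half you cite Proposition~\ref{prop:blowdown} and Theorem~\ref{thm:ZZhomology}, exactly as the paper does.
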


\begin{proof}
In the standard connected sum, the positive generator is represented by the
complex line in the \(\CP^2\) summand.  The remaining assertions follow from
Proposition~\ref{prop:blowdown} and Theorem~\ref{thm:ZZhomology}.
\end{proof}

After choosing a spin structure and a homology orientation, the
Casson--Seiberg--Witten invariant of \(Q_E\) is defined \cite{MRS}.
Theorem~\ref{thm:ZZhomology} also places \(Q_E\) in the admissible class
for the Furuta--Ohta invariant \cite{FO,RS}.  Neither invariant is evaluated
here.  Since both vanish for $S^1\times S^3$, a nonzero value of either
invariant would show that this particular blowdown is not diffeomorphic to
\(S^1\times S^3\).

\section{Further constructions and questions}

\subsection{Two copies of the quotient}

Put $Z=Y_1\#Y_2$, where the connected-sum balls are disjoint from Albanese
generators $\gamma_i\subset Y_i$.  Write
$P_i^{\varepsilon_i}=(Y_i)_{\gamma_i}^{\varepsilon_i}$ for either normal
framing.

\begin{proposition}\label{prop:doublecirclesurgery}
One has
\[
 \pi_1(Z)\cong F_2,\qquad e(Z)=0,\qquad \sigma(Z)=2,
\]
and
\[
 Z\approx
 \CP^2\#\CP^2\#(S^1\times S^3)\#(S^1\times S^3).
\]
Surgery on one summand generator gives
\[
 Z_{\gamma_1}^{\varepsilon_1}
 \cong P_1^{\varepsilon_1}\#Y_2
 \approx\CP^2\#\CP^2\#(S^1\times S^3),
\]
while surgery on both disjoint generators gives, for all four framing pairs,
\[
 Z_{\gamma_1,\gamma_2}^{\varepsilon_1,\varepsilon_2}
 \cong P_1^{\varepsilon_1}\#P_2^{\varepsilon_2}
 \approx\CP^2\#\CP^2.
\]
If $Y\cong\CP^2\#(S^1\times S^3)$, then both homeomorphisms after surgery
may be replaced by diffeomorphisms.
\end{proposition}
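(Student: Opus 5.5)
The plan is to treat $Z=Y_1\#Y_2$ with Proposition~\ref{prop:input} as the only input, and to reduce every surgery statement to a single summand.

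\textbf{Invariants of $Z$.} Van Kampen along the simply connected $S^3$ gives $\pi_1(Z)\cong\pi_1(Y_1)*\pi_1(Y_2)\cong F_2$. The Euler characteristic satisfies $e(Z)=e(Y_1)+e(Y_2)-2=0$, using $e(Y)=1$. Novikov additivity gives $\sigma(Z)=2$. For the homeomorphism type, take the two homeomorphisms $Y_i\approx\CP^2\#(S^1\times S^3)$ from Proposition~\ref{prop:input}. After an isotopy (topological homogeneity of balls), each may be assumed to carry the connected-sum ball to a standard ball. Their connected sum is then a homeomorphism $Z\approx\CP^2\#\CP^2\#(S^1\times S^3)\#(S^1\times S^3)$, once the summands are commuted.

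\textbf{Surgery commutes with the connected sum.} Since the connected-sum balls are disjoint from $\nu\gamma_i$, circle surgery on $\gamma_1$ takes place entirely inside the $Y_1$ summand. As in the proof of Proposition~\ref{prop:abstractblowup}, this gives a diffeomorphism $Z_{\gamma_1}^{\varepsilon_1}\cong P_1^{\varepsilon_1}\#Y_2$. The two surgeries have disjoint supports, so they can be done one after the other, in either order. This yields $Z_{\gamma_1,\gamma_2}^{\varepsilon_1,\varepsilon_2}\cong P_1^{\varepsilon_1}\#P_2^{\varepsilon_2}$ for all four framing pairs.

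\textbf{Homeomorphism types and the smooth case.} Proposition~\ref{prop:one} gives $P_i^{\varepsilon_i}\approx\CP^2$ for either framing. Together with $Y_2\approx\CP^2\#(S^1\times S^3)$, the same connected-sum argument as above produces the displayed homeomorphisms. If $Y\cong\CP^2\#(S^1\times S^3)$, Corollary~\ref{cor:standardquotient} gives $P_i^{\varepsilon_i}\cong\CP^2$. Smooth connected sum is well defined up to diffeomorphism for oriented connected manifolds (Cerf, Palais). Hence both outputs become $\CP^2\#\CP^2\#(S^1\times S^3)$ and $\CP^2\#\CP^2$ up to orientation-preserving diffeomorphism.

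\textbf{Main obstacle.} The only delicate point is well-definedness of connected sum in the topological category, used to pass homeomorphisms of summands to homeomorphisms of sums. I would handle it with the annulus theorem in dimension four (Quinn), which makes topological connected sum independent of the chosen locally flat balls. Alternatively, one can bypass it for the double surgery: $P_1\#P_2$ is a smooth simply connected manifold with form $\langle1\rangle\oplus\langle1\rangle$, so Freedman's classification identifies it directly. For the other two statements one would need a topological version of the same uniqueness, so the annulus-theorem route is the one I would use throughout.
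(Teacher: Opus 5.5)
Your proposal is correct and follows essentially the same route as the paper. It uses Van Kampen and additivity for $\pi_1$, $e$, $\sigma$, and Proposition~\ref{prop:input} in each summand for the type of $Z$. It localizes each surgery to its own summand and uses Proposition~\ref{prop:one} for $P_i^{\varepsilon_i}\approx\CP^2$; the paper cites Theorem~\ref{thm:circle}, which rests on that proposition. For the smooth case it uses Corollary~\ref{cor:standardquotient}, as the paper does.

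Two of your steps go beyond the paper's write-up. Appealing to Quinn's annulus theorem to make topological connected sum well defined fills in a step the paper leaves implicit. The direct Freedman argument for $P_1^{\varepsilon_1}\#P_2^{\varepsilon_2}$ is a valid alternative for that case.
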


\begin{proof}
Van Kampen gives $\pi_1(Z)\cong\mathbb Z*\mathbb Z=F_2$, and the
connected-sum formulas give $e(Z)=0$ and $\sigma(Z)=2$.  Applying
Proposition~\ref{prop:input} in each summand gives the asserted homeomorphism
type of $Z$.

The connected-sum neck and the second summand are disjoint from
$\nu\gamma_1$, so the first surgery takes place entirely in $Y_1$.  Hence
\[
 Z_{\gamma_1}^{\varepsilon_1}
 \cong (Y_1)_{\gamma_1}^{\varepsilon_1}\#Y_2.
\]
Theorem~\ref{thm:circle} identifies the first factor topologically with
$\CP^2$.  The two surgery neighborhoods lie in different summands, so the
surgeries commute and give the second displayed diffeomorphism.  Each factor
is homeomorphic to $\CP^2$.  If $Y$ is standard,
Corollary~\ref{cor:standardquotient} identifies each
$P_i^{\varepsilon_i}$ diffeomorphically with $\CP^2$.
\end{proof}

Choose fixed-point charts in the two copies with the same oriented local
branch model.  Remove invariant four-balls and glue their boundaries by an
orientation-reversing equivariant diffeomorphism whose restriction to the
fixed equators also reverses the boundary orientation.  The oriented source
is therefore $X\#X$, rather than $X\#\overline X$, and the quotient is
$Y\#Y$.  This equivariant connected sum gives a branched covering
This gives a branched covering
\[
 X\#X\longrightarrow Y\#Y
\]
with branch surface $B\#B\cong\#_6\mathbb{RP}^2$ and twisted normal Euler
number
$e_N(B\#B)=e_N(B)+e_N(B)=4$.  Indeed, choose generic local sections of the
twisted normal bundles that are constant near the deleted disks.  The
boundary values match under the chosen gluing, and the signed zero counts
add; this is precisely the twisted Euler-number pairing.  Equivalently, the
branched-cover signature formula gives
$2=2\cdot2-e_N(B\#B)/2$.  The gauge-theoretic and Hodge-theoretic arguments of Section~4
do not extend directly to $X\#X$: the connected sum has vanishing ordinary
Seiberg--Witten invariants and no inherited K\"ahler structure.  Thus the
proposition is a surgery application, not a two-copy version of the
six-point obstruction.

\subsection{Two smooth questions}

Two smooth questions remain.  The first controls whether the
topological splitting of $Y$ can be realized smoothly by a positive sphere.
The second asks whether circle surgery produces the standard $\CP^2$ and,
more strongly, whether the marked dual sphere carries nonstandard smooth
information.

In the standard model $\CP^2\#(S^1\times S^3)$, surgery on the product
generator replaces the $S^1\times S^3$ summand by $S^4$.  We call the belt
sphere in this $S^4$ summand, equipped with its inherited reverse-surgery
parametrization, the \emph{standard parametrized dual sphere}.

Nothing in the preceding calculations rules out the existence of a positive
sphere.  We suspect that such a sphere is present, although it is not visible
in the current description of the quotient.  The remaining problem is to
construct it and determine the smooth type of its blowdown.

\begin{question}\label{q:blowdown}
Is the positive generator of $H_2(Y;\mathbb Z)$ represented by a smoothly
embedded $+1$-sphere?  If so, is the corresponding blowdown diffeomorphic to
$S^1\times S^3$?
\end{question}

\begin{question}\label{q:one}
For either normal framing of an Albanese generator $\gamma$, is
$Y_\gamma$ diffeomorphic to $\CP^2$?  More strongly, if it is standard, is
its parametrized dual sphere equivalent to the standard parametrized dual
sphere?
\end{question}

The authors plan to study these remaining questions in future work.

\appendix
\section{Exact computational certificates}\label{app:certificates}
\small

This appendix records the computer-assisted parts of the proofs and explains
how they may be checked independently.  The accompanying ancillary archive
contains the input data, exact certificates, verification scripts, output
logs, software requirements, and SHA256 checksums.  The final verifications
use integer, finite-field, or exact cyclotomic arithmetic.  The bounded
searches retained in the archive were used only to find candidates and are
not part of the proofs.  The complete ancillary files are available at
\url{https://github.com/akhmedov-coder/cartwright-steger-real-albanese}.

\subsection{Source dictionary and master verification}

The group presentation and the matrices for $u,v,b$ are those of
\cite[Theorem~1.1 and the formulas preceding it]{CKY}.  The words
$a_1,a_2,a_3$, the subgroup of index $864$, and the integral homology
coordinates are from \cite[Theorem~1.6]{CKY}.  The reflection matrix $K$
and the words describing its action are taken from
\cite[Section~2.3, especially Proposition~2.3]{SS}; the reduction to the
finite field follows the integral model in \cite[Section~1.5]{CKY}.  For
the weight-two calculation, the archived Borisov--Yeung source is the
source distribution at \url{https://arxiv.org/src/1804.00737}, and the
archived equation of $F_1$ is from
\url{https://arxiv.org/src/1903.03017}.  The archive records exact source
filenames and SHA256 hashes, so each hard-coded datum can be compared with
its cited source.

After installing the versions listed in \texttt{requirements.txt}, all
proof certificates are checked from the extracted \texttt{supplement}
directory by
\begin{verbatim}
python3 run_all.py
\end{verbatim}
The top-level \texttt{SHA256SUMS} covers the source copies, scripts,
certificates, and logs.  The individual commands are described below.

\subsection{The fixed word and its homology class}

Proposition~\ref{prop:k-one-two} uses the word
\[
 W=u^{-1}v^{-1}uv^2u^{-1}vbu^{-1}v^{-1}uv^{-1}b^{-1}uv^{-1}.
\]
The file \texttt{cky\_coset\_table.pkl} contains the $864$-row right-coset
table for $\Pi$ in the ambient CKY group.  The script
\texttt{verify\_w1\_exact.py} reads the word in this table and checks
$W\in\Pi$.  It then verifies
\[
                         K\overline W K^{-1}=W
\]
in the projective matrix model over $\mathbb Q(\zeta_{12})$.

The files \texttt{cocycle\_first.txt} and
\texttt{cocycle\_second.txt} contain two integer cellular one-cocycles on
the $864$-sheeted presentation complex.  The script
\texttt{verify\_certificate.py} checks every entry of the coset table,
every lifted relator, the normalizations on $a_1,a_2,a_3$, and the values of
the two cocycles on $W$.  The resulting pair is $(2,-2)$.  These checks are
independent of the numerical procedure used to find the cocycles.

The first three fixed-word checks run as part of \texttt{run\_all.py}.
They may instead be invoked individually as \texttt{sympy\_cosets.py},
\texttt{verify\_w1\_exact.py}, and \texttt{verify\_certificate.py}.
The cocycles may also be regenerated by the optional script
\path{cover_cocycles.py}.  A second optional script,
\path{search_ambient_centralizer.py}, reproduces the bounded search for
fixed words.  Its output is not used to certify the proposition.

\subsection{The finite-field obstruction}

The script \texttt{f4\_fixed\_obstruction.py} supplies the exact finite
calculation used in Theorem~\ref{thm:finite-obstruction}.  It implements
$\mathbb F_4$ arithmetic directly, verifies the reduced CKY relators and the
three formulas for the reflection on the generators, enumerates the group
$Q$, and checks the induced involution on every element.  Its output is
\begin{verbatim}
quotient_size 72
matrix_image_size 72
kernel_homology_labels [(0, 0)]
fixed_size 6
fixed_homology Counter({(0, 0): 6})
odd_fixed_exists False
\end{verbatim}
Thus the six fixed elements all have zero mod-two homology label.  This is
the finite assertion used in the proof that
$\operatorname{ab}(\Fix\phi)=2\mathbb Z(1,-1)$ and hence that the oval
number is two.  The script uses only Python~3 and no floating-point
arithmetic.

\subsection{The weight-two pencil}

The weight-two certificate contains a parser-ready transcription of the
$85$ Borisov-Yeung equations and the characteristic-zero equations for the
fiber $F_1$ from the ancillary files to \cite{Rito}.  The archive also
contains byte-for-byte copies of the source files.  The scripts
\texttt{verify\_source\_provenance.py} and
\texttt{verify\_coordinate\_points.py} compare the transcribed equations
with their sources and check that $[U_7],[U_8],[U_9]$ lie on $X$.

With Python~3 and SymPy~1.14, the projective calculations used in
Theorem~\ref{thm:weight-two-pencil} are reproduced by
\begin{verbatim}
python3 test_weight2_base.py 10007
python3 test_weight2_base.py 10009
python3 test_weight2_base.py 10037
python3 test_weight2_pencil.py 10007
python3 test_weight2_pencil.py 10009
python3 test_weight2_pencil.py 10037
\end{verbatim}
For the first three runs, each of the seven affine charts must return
\texttt{empty=True}.  For the last three runs, the common base scheme must
return \texttt{empty=True}.  The archived logs record complete runs at all
three primes.  The file \texttt{SHA256SUMS} records the hashes of the source
files, scripts, and logs.

\bigskip
\begin{center}
 {\sc AI Use Disclosure} 
 \end{center}
 
 \medskip
 LLM tools were used for assistance for checking grammar and language editing. 
 All mathematical ideas, arguments, and results in this paper are due to the authors, who take full responsibility for the content.

\footnotesize
\sloppy
\hbadness=10000

\end{document}